  \crefname{theorem}{Theorem}{Theorems}
  \crefname{lemma}{Lemma}{Lemmas}
  \crefname{remark}{Remark}{Remarks}
  \crefname{proposition}{Proposition}{Propositions}
  \crefname{definition}{Definition}{Definitions}
  \crefname{corollary}{Corollary}{Corollaries}
  \crefname{section}{Section}{Sections}
  \crefname{figure}{Figure}{Figures}
\newcommand \exc {\textnormal{exc}}
\newcommand \br {\textnormal{br}}
\newcommand \E {\mathbb E}
\newcommand\D {\mathbb D}
\newcommand\R {\mathbb R}
\newcommand\X {X^\textnormal{exc}}
\newcommand\Ta {\mathcal{T}_{\alpha}}
\newcommand\GW {\mathsf{GW}}
\newcommand\oD {{ \Delta^*}}
\newcommand\La {\mathscr{L}_\alpha}
\renewcommand\H {H^\textnormal{exc}}
\def\llbracket{[\hspace{-.10em} [ }
\def\rrbracket{ ] \hspace{-.10em}]}
\newcommand\z {\zeta}
\newcommand\W {\mathsf{W}}
\renewcommand\P {\mathbb{P}}
\renewcommand\La {\mathscr{L}_\alpha}
\newcommand{\N}{\mathbb N}
\newcommand\Es[1]{\mathbb{E}\left[#1\right]}
\renewcommand\Pr[1]{\mathbb{P}\left(#1\right)}
\newcommand{\op}[1]{\operatorname{#1 }}
\newcommand{\T}{\mathcal{T}}
\begin{document}



\vfill

 \begin{figure}[!h]
 \begin{center}
  \includegraphics[width=0.8 \linewidth]{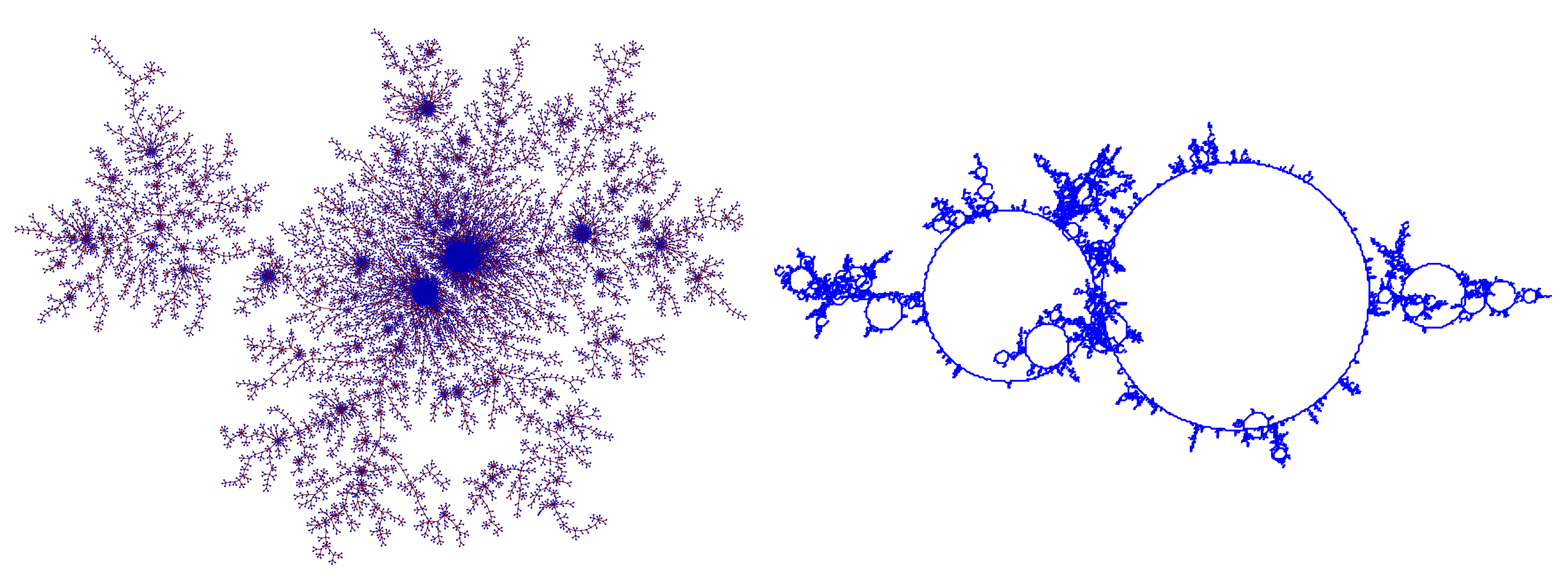}
 \caption{ \label{fig:frerots} An $ \alpha=1.1$ stable tree, and its {associated}
 looptree {$\mathscr{L}_{1.1}$}, embedded non isometrically in the plane
 (this embedding of $\mathscr{L}_{1.1}$ contains intersecting loops, even though they are disjoint in the metric space).}
 \end{center}
 \end{figure}
 \vfill

 \clearpage

\section{Introduction}
In this paper, we introduce and study a new family $ (\mathscr{L}_\alpha)_{1 < \alpha <2}$ of random compact metric spaces which we call stable \emph{looptrees} (in short, looptrees). Informally, they are constructed from the stable tree of index $ \alpha$ introduced in \cite{DLG02,LGLJ98} by replacing each branch-point of the tree by a cycle of length proportional to the ``width'' of the branch-point   and then gluing the cycles along the tree structure (see \cref{def:looptree} below). We  study their fractal properties and calculate in particular their Hausdorff dimension. We also prove that looptrees naturally appear as scaling limits for the Gromov--Hausdorff topology of various discrete random structures, such as {Boltzmann-type random dissections which were introduced in \cite{Kor11}.}

 Perhaps more unexpectedly, looptrees appear in the study of random maps decorated with statistical physics models. More precisely, in a companion paper \cite{CKpercolooptrees}, we prove that the stable looptree of parameter $ \frac{3}{2}$ is the scaling limit of cluster boundaries in critical site-percolation on large random triangulations and on the uniform infinite planar triangulation of Angel \& Schramm \cite{AS03}. We also conjecture a more general statement for $O(n)$ models on random planar maps.

\medskip

\begin{center} In this paper $\alpha \in (1,2)$. \end{center}

 \paragraph*{Stable looptrees as limits of discrete looptrees.} In order to explain the intuition leading to the definition of stable looptrees, we first introduce them as limits of random discrete graphs (even though they will be defined later  without any reference to discrete objects). To this end, with every rooted oriented tree (or plane tree) $ \tau$, we associate a graph denoted by $ \mathsf{Loop}( \tau)$ and constructed by replacing each vertex $u \in \tau$ by a discrete cycle of length given by the degree of $u$ in $ \tau$ (i.e. number of neighbors of $u$) and gluing all these cycles according to the tree structure provided by $\tau$, see \cref{fig:loop} {(by discrete cycle of length $k$, we mean a graph on $k$ vertices $v_{1}, \ldots,v_{k}$ with edges $v_{1}v_{2}, \ldots, v_{k-1}v_{k},v_{k}v_{1}$)}. We endow $ \mathsf{Loop}( \tau)$ with the graph distance (every edge has unit length).

 \begin{figure}[!h]
 \begin{center}
 \includegraphics[width=0.7  \linewidth]{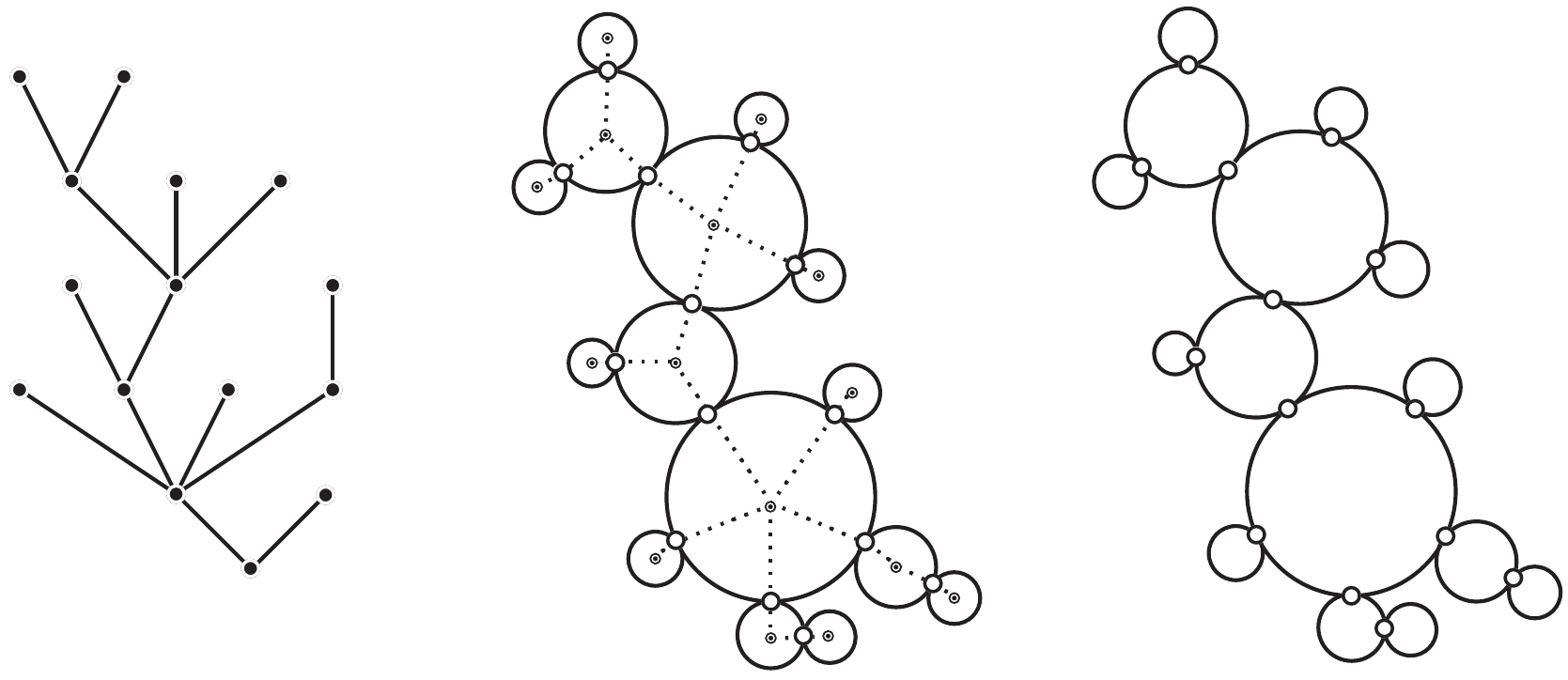}
 \caption{ \label{fig:loop}A discrete tree $\tau$ and its associated discrete looptree $ \mathsf{Loop}( \tau)$.}
 \end{center}
 \end{figure}
 
Fix $\alpha \in (1,2)$ and let $\tau_{n}$ be a Galton--Watson tree conditioned on having $n$ vertices, whose offspring distribution $\mu$ is critical and satisfies $\mu([k, \infty)) \sim |\Gamma(1- \alpha)|^{-1} \cdot k^{-\alpha}$ as $k \rightarrow \infty$.  The stable looptree $ \mathscr{L}_{\alpha}$ then appears (\cref{thm:continuity}) as the scaling limit in distribution for the Gromov--Hausdorff topology of discrete looptrees $ \mathsf{Loop}( \tau_{n})$:  \begin{eqnarray} n^{-1/\alpha} \cdot \mathsf{Loop}( \tau_{n}) & \quad \xrightarrow[n\to\infty]{(d)} \quad & \mathscr{L}_{\alpha},  \label{eq:invprinc} \end{eqnarray}
where $c \cdot M$ stands for the metric space obtained from $M$ by multiplying all distances by $c >0$.   Recall that the Gromov--Hausdorff topology gives a sense to convergence of (isometry classes) of compact metric spaces, see \cref{sec:limitcases} below for the definition.

It is known that the random trees $ \tau_{n}$ converge, after suitable scaling, towards the so-called stable tree $ \mathcal{T}_{\alpha}$ of index $ \alpha$ (see \cite{Du03,DLG02,LGLJ98}). It thus seems natural to try to define $ \mathscr{L}_{\alpha}$ directly from $ \mathcal{T}_{\alpha}$ by mimicking the discrete setting (see \cref{fig:frerots}). However this construction is not straightforward since the countable collection of loops of $ \mathscr{L}_{\alpha}$ does not form a compact metric space:  one has to take its closure. In particular, two different cycles of $ \mathscr{L}_{\alpha}$ never share a common point.  To overcome these difficulties, we define $ \mathscr{L}_{\alpha}$ by using the excursion $X^{\exc,(\alpha)}$ of an $ \alpha$-stable spectrally positive LŽvy process (which also codes $ \mathcal{T}_{\alpha}$).

  \paragraph*{Properties of stable looptrees.}Stable looptrees possess a fractal structure whose dimension is identified by the following theorem:  \begin{theorem}[Dimension] \label{thm:dimension} For every $\alpha \in (1,2)$, almost surely, $ \mathscr{L}_{\alpha}$ is a random compact metric space of Hausdorff dimension $\alpha$. \end{theorem}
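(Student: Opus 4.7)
The natural strategy is to work via the canonical projection $p : [0,\zeta] \to \La$ coming from the coding of $\La$ by the $\alpha$-stable excursion $\X$, and push the full mass of the Lebesgue measure forward to obtain a measure $\mu$ on $\La$. The Hausdorff dimension then follows from a two-sided bound: a Hölder estimate on $p$ for the upper bound, and a mass distribution argument for $\mu$ for the lower bound.

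\textbf{Upper bound.} The plan is to show that $p$ is $(1/\alpha - \varepsilon)$-Hölder for every $\varepsilon > 0$. If $s < t$ are two times in $[0,\zeta]$, distances on $\La$ between $p(s)$ and $p(t)$ can be controlled by summing the contributions of the loops encountered along the path in the underlying stable tree; this sum is in turn bounded by the total ``variation'' of $\X$ on $[s,t]$ suitably measured by its jumps. Since an $\alpha$-stable excursion has modulus of continuity of order $|t-s|^{1/\alpha}$ up to logarithmic corrections (classical Khintchine-type estimates), one gets $d_{\La}(p(s),p(t)) \leq C |t-s|^{1/\alpha - \varepsilon}$ simultaneously for all $s,t$, almost surely. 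Because $[0,\zeta]$ has Hausdorff dimension $1$, the image $\La = p([0,\zeta])$ then has Hausdorff dimension at most $\alpha/(1 - \alpha\varepsilon)$, and letting $\varepsilon \downarrow 0$ yields $\dim_H(\La) \leq \alpha$.

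\textbf{Lower bound.} The plan is to apply the mass distribution principle to $\mu$. It suffices to prove that for every $\varepsilon > 0$,
\begin{equation*}
    \limsup_{r \downarrow 0} \frac{\mu(B(x,r))}{r^{\alpha - \varepsilon}} < \infty \qquad \text{for $\mu$-almost every } x \in \La.
\end{equation*}
Writing $x = p(U)$ with $U$ uniform on $[0,\zeta]$ and using Fubini, this reduces to estimating, for a fixed (or typical) time $U$, the Lebesgue measure of the set of times $t$ such that $d_{\La}(p(U), p(t)) \leq r$. By the coding of $\La$, such $t$ must lie in a window whose length can be upper bounded using hitting times and overshoot estimates for the dual $\alpha$-stable process; by scaling these hitting times have the law of an $\alpha$-stable positive random variable, so the window has length at most of order $r^{\alpha}$ with overwhelming probability. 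Summing over a dyadic scale and applying Borel--Cantelli gives the almost sure upper bound on $\mu(B(x,r))$, hence $\dim_H(\La) \geq \alpha - \varepsilon$ for all $\varepsilon > 0$.

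\textbf{Main difficulty.} The delicate point is the lower bound: controlling $\mu(B(x,r))$ requires a sharp two-sided comparison between the metric $d_{\La}$ and quantities read directly off the Lévy excursion (jump sizes and ``winding'' contributions of ancestral loops). One must rule out the scenario where a single big loop near $x$ artificially inflates $\mu(B(x,r))$ by bringing many parameters $t$ close in $\La$; this is where estimates on the joint law of the jump at the infimum and of the successive records of $\X$ enter, and where the argument genuinely departs from the Duquesne--Le Gall computation for the stable tree $\Ta$ itself.
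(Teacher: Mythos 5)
Your plan has the right global architecture (a Hölder-type control for the upper bound, a mass distribution argument for the push-forward of Lebesgue measure for the lower bound), but both halves rest on steps that are not justified and, as stated, do not hold. For the upper bound, the assertion that the $\alpha$-stable excursion ``has modulus of continuity of order $|t-s|^{1/\alpha}$ up to logarithmic corrections'' is false for $\alpha\in(1,2)$: $\X$ is a jump process whose largest jump is a.s.\,of macroscopic size, so it has no uniform modulus of continuity at all (Khintchine-type estimates are a Brownian statement), and the sum of its jumps on any interval is a.s.\,infinite, so ``total variation measured by the jumps'' cannot bound anything either. The bound actually available from the coding is \cref{lem:controls}\,$(ii)$, $d(s,t)\le \X_{s}+\X_{t-}-2I_{s}^{t}$, and this quantity remains of order $1$ on arbitrarily short windows containing a large jump, so Hölder continuity of $\mathbf{p}$ does not follow from it. (Such a Hölder property may well be true, because looptree distances are really governed by the descents of $\X$, i.e.\,by sums $\sum_{r}\min(x_{r}^{t},\Delta_{r}-x_{r}^{t})$ over short windows, but controlling those uniformly is a genuine estimate that your sketch does not supply.) The paper avoids the issue altogether: it covers $\mathscr{L}_{\alpha}$ by the images of the intervals between consecutive jumps of size $>\varepsilon^{1/\alpha}$, bounds each diameter by twice the oscillation of $\X$ on the corresponding interval, and shows (after transferring to the unconditioned process via absolute continuity and the Vervaat transform) that with probability tending to $1$ there are at most $\varepsilon^{-1-\eta}$ such intervals, each with oscillation at most $\varepsilon^{1/\alpha-\eta}$; this gives $\dim_{H}\le\alpha$ with no uniform Hölder estimate.

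For the lower bound, the reduction to a uniform time $U$, the dyadic scales and Borel--Cantelli are indeed the paper's scheme, but the decisive step is only asserted: why must \emph{every} $t$ with $d(U,t)\le r$ lie in a time window of length about $r^{\alpha}$ around $U$? This does not follow from ``hitting times having a stable law''. What is needed is a separating loop at every scale: a jump time $T_{\varepsilon}\in(U-\varepsilon,U)$ which is an ancestor of $U$ with \emph{both} arcs long, $\min\big(x_{T_{\varepsilon}}^{U},\Delta_{T_{\varepsilon}}-x_{T_{\varepsilon}}^{U}\big)\ge\varepsilon^{1/\alpha+\eta}$, together with $\inf_{[U,U+\varepsilon^{1-\eta}]}\X<\X_{T_{\varepsilon}-}$; then \cref{lem:controls}\,$(i)$ forces $d(s,U)\ge\varepsilon^{1/\alpha+\eta}$ for every $s\notin[U-\varepsilon,U+\varepsilon^{1-\eta}]$, which is exactly the volume bound $\nu(B_{\varepsilon^{1/\alpha+\eta}}(\mathbf{p}(U)))\le 2\varepsilon^{1-\eta}$. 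Producing $T_{\varepsilon}$ is the real work (\cref{lem:borninf} in the paper): it uses the Poisson point process description of descents of the stable process via the ladder process (\cref{prop:itomeasure}, \cref{cor:technique}), which shows the failure probability at scale $\varepsilon$ is only $O(\varepsilon^{\gamma})$ --- polynomially, not ``overwhelmingly'', small, which is still enough for Borel--Cantelli along $\varepsilon=2^{-k}$ --- plus growth estimates for $-X$ near $0$ to guarantee the third condition. Note also that the danger you flag is inverted: a single big loop near $x$ does not inflate $\nu(B_{r}(x))$; the problem would be the \emph{absence}, near $U$, of any ancestor loop with two arcs of length $\ge r$, which is precisely what the descent computation rules out. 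Without this ingredient your claimed bound on $\mu(B(x,r))$ is unsupported, so the proposal as written has genuine gaps in both directions.
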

The proof of this theorem uses fine properties of the excursion $ X^{\exc,(\alpha)}$.
We also prove that the family of stable looptrees interpolates 
 between the circle of unit length $ \mathcal{C}_{1}:=(2\pi)^{-1} \cdot\mathbb{S}_{1}$ and the $2$-stable tree $ \mathcal{T}_{2}$ which is the Brownian Continuum Random Tree introduced by Aldous \cite{Ald93} (up to a constant multiplicative factor). 
 
\begin{theorem}[Interpolation loop-tree] \label{thm:1and2} The following two convergences hold in distribution for the Gromov--Hausdorff topology  \begin{eqnarray*}  (i) \quad  \mathscr{L}_{\alpha} \quad  \xrightarrow[\alpha\downarrow 1]{ (d)}  \quad  \mathcal{C}_{1},  & \mbox{}&  \qquad (ii) \quad \mathscr{L}_{\alpha}  \quad \xrightarrow[\alpha\uparrow 2]{ (d)}  \quad\frac{1}{2} \cdot \mathcal{T}_{2}.   \end{eqnarray*} 
\end{theorem}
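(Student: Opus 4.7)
The plan is to transfer both convergences to the level of the coding L\'evy processes. Recall that $\La$ is constructed from the normalized excursion $X^{\exc,(\alpha)}$ of an $\alpha$-stable spectrally positive L\'evy process, and that at the two boundary values $\alpha=1$ and $\alpha=2$ the law of this excursion degenerates in a controlled way. In each case the proof splits into two steps: (a) establish the appropriate degeneration of $X^{\exc,(\alpha)}$, and (b) verify that the looptree functional is continuous at the limiting object, so that one may conclude via a continuous-mapping-type argument on Skorokhod space.

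For part (i), the key input is that as $\alpha \downarrow 1$ the normalized excursion $X^{\exc,(\alpha)}$ concentrates on a single dominant jump. Writing $\oD_{\alpha}$ for the size of the largest jump of $X^{\exc,(\alpha)}$, one expects $\oD_{\alpha} \to 1$ in probability, while the supremum of all remaining jump sizes tends to $0$. This should follow from standard computations on the L\'evy measure $|\Gamma(1-\alpha)|^{-1} x^{-\alpha-1}\,dx$ together with the Vervaat-type description of $X^{\exc,(\alpha)}$. In the looptree, the dominant jump produces a single cycle of length $\oD_{\alpha}$, while the remaining loops contribute a set of vanishing GH-diameter. Hence $\La$ is well-approximated, for the GH-distance, by a circle of length $\oD_{\alpha}$, and this in turn converges to $\mathcal{C}_{1}$.

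For part (ii), as $\alpha \uparrow 2$ the normalized excursion $X^{\exc,(\alpha)}$ converges in distribution, in the Skorokhod topology on $\D([0,1],\R)$, to a constant multiple of the standard Brownian excursion $e$; moreover the largest jump vanishes in probability. In the distance formula defining $\La$, the contributions from ``going around'' cycles vanish, so only the continuous part of the distance survives, and one identifies the limit with $\tfrac12\bigl(e(s)+e(t)-2\i{s,t}e\bigr)$, i.e.\ half the tree distance coding $\mathcal{T}_{2}$. The factor $\tfrac12$ is intrinsic to the construction: already in the discrete picture, moving from one cycle to an adjacent cycle requires traversing on average only half of each cycle, an effect that survives the scaling limit and does not disappear with the jumps themselves.

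The main obstacle in both parts is the continuity of the looptree functional at the limiting excursion. For (i), one must rule out that many smaller jumps conspire to add macroscopic length to $\La$; this should be handled by a uniform estimate on $\sum_{s\,:\,\Delta_{s}<\eta}\Delta_{s}$ as $\eta \downarrow 0$, combined with the concentration of $\oD_{\alpha}$. For (ii), the opposite challenge arises: one must show that the numerous small cycles have vanishing total arc-length contribution, and that the ``tree part'' of the looptree distance converges to $\tfrac12$ times the tree distance coded by $e$, uniformly in $(s,t)$. In both cases, combining the explicit formula for $\La$-distances as a functional of $X^{\exc,(\alpha)}$ with standard tightness estimates on $\D([0,1],\R)$, together with moment bounds on the jump sizes, should close the proof.
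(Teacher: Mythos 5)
Your overall strategy (degenerate the coding excursion at the two endpoints, then control the looptree distance functional) is the paper's strategy in outline, but both places where the real work lies are asserted rather than proved, and in each case the asserted mechanism would fail as stated. For (i), the statement that $\oD(X^{\exc,(\alpha)})\to 1$ with all other jumps vanishing ``follows from standard computations on the L\'evy measure together with the Vervaat-type description'' is precisely the hard point: one must exclude that the drop of total magnitude $1$ over $[0,1]$ is produced by an accumulation of many small jumps rather than one macroscopic jump, and the conditioning $\{\zeta=1\}$ degenerates as $\alpha\downarrow1$, so no direct L\'evy-measure computation gives this. The paper devotes Proposition \ref{prop:cvsautexc} to it, proved via entrance-law estimates under the It\^o measure (\cref{cv:Xeps}) and a separate exchangeability/shuffling argument for the bridge (\cref{lem:dur}); Remark \ref{rem:jump1} even points out that an elementary proof that $\oD(X^{\exc,(\alpha)})\to1$ would \emph{simplify} the argument, i.e.\ it is not available off the shelf. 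Moreover your proposed control of the small loops through a bound on $\sum_{s:\Delta_{s}<\eta}\Delta_{s}$ cannot work: for every $\alpha\in(1,2)$ and every $\eta>0$ this sum is a.s.\ infinite (small jumps of a stable process are not summable). What one actually needs is a path-level statement, e.g.\ the uniform closeness of $X^{\exc,(\alpha)}$ to the affine function combined with the bound $d(s,t)\le \X_{s}+\X_{t-}-2I_{s}^{t}$ of \cref{lem:controls}, which is how the paper deduces that the ``non-dominant'' loops contribute nothing to distances.

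For (ii) the proposed identification of the constant $\tfrac12$ is not coherent: the looptree distance has no ``continuous part'' to survive --- by \eqref{eq:u2} it is entirely a sum of loop contributions $\Delta_{s}\min(u_{s}^{t},1-u_{s}^{t})$ --- and the looptree functional is \emph{not} continuous at the (continuous) Brownian excursion, where it would give the zero pseudo-distance; so no continuous-mapping argument can produce the limit, and the factor $\tfrac12$ is exactly the measure of this discontinuity. The paper's route is: tightness of $d^{(\alpha)}$ from $d^{(\alpha)}\le \mathrm{d}_{X^{\exc,(\alpha)}}$ and \cref{prop:cve}; reduction, by a re-rooting identity, to showing $d^{(\alpha)}(0,U)/X^{\exc,(\alpha)}(U)\to\tfrac12$; and then the descent decomposition \eqref{eq:u}--\eqref{eq:u2} together with \cref{prop:itomeasure} (the relative positions $u_{s}^{t}$ are i.i.d.\ uniform, independent of the jump sizes), a law of large numbers giving $\Es{\min(U,1-U)}/\Es{U}=\tfrac12$ as all jumps become small, and a transfer from $X^{(\alpha)}$ to $X^{\mathrm{br},(\alpha)}$ to $X^{\exc,(\alpha)}$ by absolute continuity (needing the uniform density bound of \cref{lem:technique}) and the Vervaat transform. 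None of these ingredients (descents, independence/uniformity of the $u_{s}^{t}$, re-rooting, bridge/excursion transfer) appears in your proposal, and ``moment bounds on the jump sizes'' plus tightness cannot substitute for them, since the limit is a genuinely probabilistic average and not the value of the functional at the limiting path.
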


See \cref{fig:1et2} for an illustration. The proof of $(i)$ relies on a new ``one big-jump principle'' for the normalized excursion of the $ \alpha$-stable spectrally positive LŽvy process which is of independent interest: informally, as $ \alpha \downarrow 1$, the random process $X^{\exc,(\alpha)}$ converges towards the deterministic affine function on $[0,1]$ which is equal to $1$ at time $0$ and $0$ at time $1$. We refer to \cref{prop:cvsautexc}  for a precise statement. Notice also the appearance of the factor $ \frac{1}{2}$ in $(ii)$. 
\begin{figure}[!h]
 \begin{center}
\includegraphics[width=11cm]{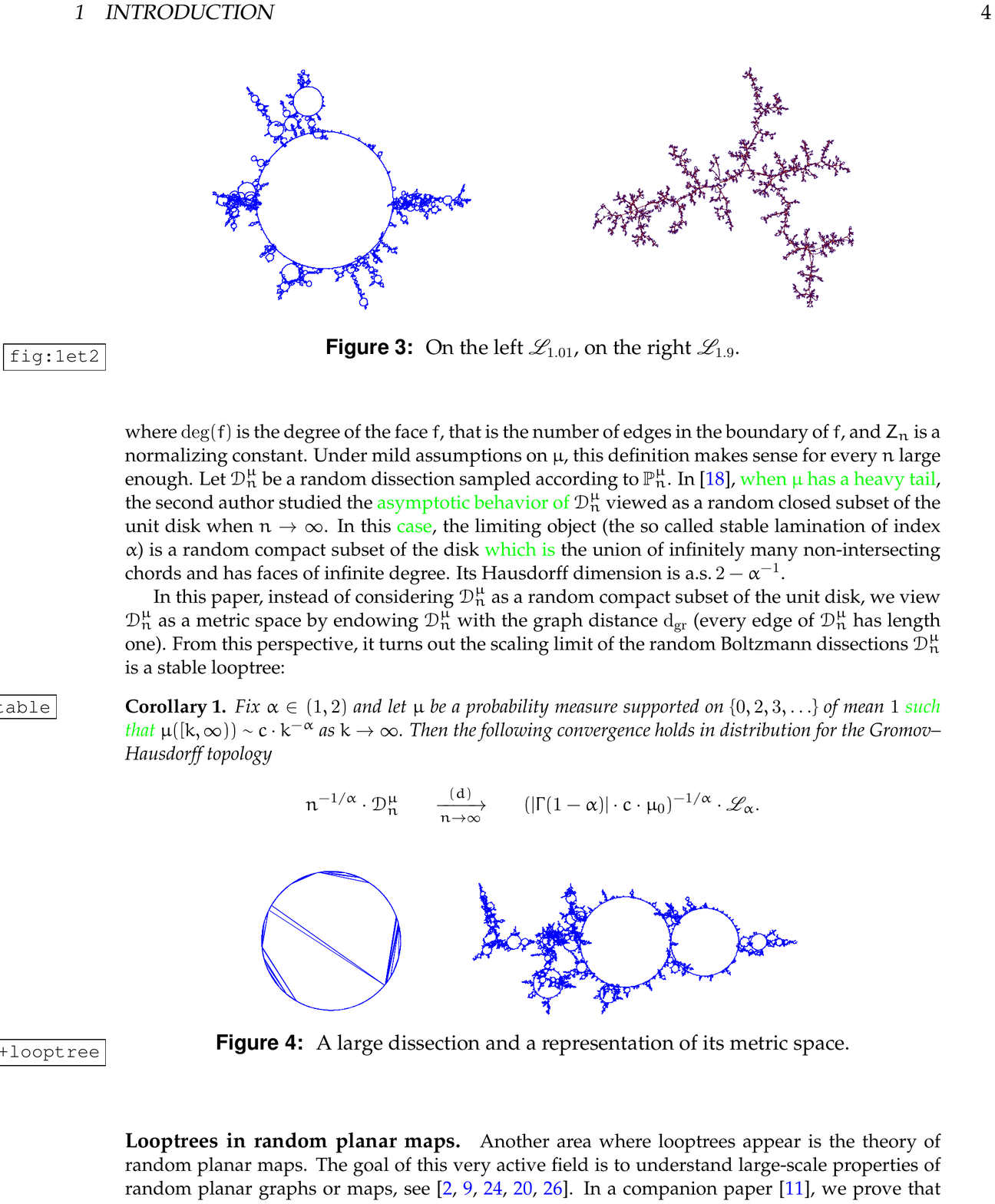}
 \caption{ \label{fig:1et2} On the left $ \mathscr{L}_{1.01}$, on the right $ \mathscr{L}_{1.9}$.}
 \end{center}
 \end{figure}

 \paragraph*{Scaling limits of Boltzmann dissections.}  Our {previously mentioned} invariance principle (\cref{thm:continuity}) also enables us to prove that stable looptrees are  scaling limits of Boltzmann dissections of \cite{Kor11}. Before giving a precise statement, we need to introduce some notation. For $n \geq 3$, let $P_{n}$ be the convex polygon inscribed in the unit disk of the complex plane whose vertices are the $n$-th roots of unity. By definition, a dissection is the union of the sides of $P_n$  and of a collection of diagonals that may intersect only at their endpoints, see \cref{fig:dual}. The faces are the connected components of the complement of the dissection in the polygon. Following \cite{Kor11},  if $ \mu=(\mu_j)_{j \geq 0}$ is a probability distribution on $\{0,2,3,4, \ldots\}$ of mean $1$, we define a
Boltzmann--type probability measure $\mathbb{P} ^ {\mu}_ {n}$ on the set of all dissections of $P_{n+1}$ by setting, for every dissection $ \omega$ of $P_{n+1}$:
$$ \mathbb{P} ^ {\mu}_ {n}(\omega)=\frac{1}{Z_n} \prod_{f \textrm{ face of } \omega}
\mu_{\deg(f)-1},$$ where $\deg(f)$ is the
degree of the face $f$, that is the number of edges in the boundary of $f$, and $Z_n$ is a normalizing constant.  Under mild assumptions on $ \mu$, this definition makes sense for every $n$ large enough. Let $ \mathcal{D}_{n}^\mu$ be a random dissection sampled according to $ \mathbb{P}_{n}^\mu$.  In \cite{Kor11}, the second author studied the {asymptotic behavior of} $ \mathcal{D}_{n}^\mu$  viewed as a random closed subset of the unit disk when $n \to \infty$ in the case where $ \mu$ has a heavy tail. Then the limiting object  (the so-called stable lamination of index $ \alpha$)  is a random compact subset of the disk {which is} the union of infinitely many non-intersecting chords and has faces of infinite degree. Its  Hausdorff dimension is a.s.\,$ 2- \alpha^{-1}$.

In this paper, instead of considering $ \mathcal{D}_{n}^\mu$ as a random compact subset of the unit disk, we view  $ \mathcal{D}_{n}^\mu$ as a metric space by endowing {the vertices of} $ \mathcal{D}_{n}^\mu$ with the graph distance (every edge of $ \mathcal{D}_{n}^\mu$ has length one). From this perspective, the scaling limit of the random Boltzmann dissections $ \mathcal{D}_{n}^\mu$ is a stable looptree (see \cref{fig:dissec+looptree}):  

\begin{corollary} \label{cor:discretencstable} Fix $ \alpha \in (1,2)$ and let $\mu$ be a probability measure supported on $\{0,2,3, \ldots\}$ of mean $1$ {such that} $\mu( [k, \infty)) \sim c \cdot k^{- \alpha}$ as $ k \to \infty$, for a certain $c>0$. Then  the following convergence holds in distribution for the Gromov--Hausdorff topology 
  \begin{eqnarray*} n^{-1/\alpha} \cdot  \mathcal{D}^\mu_{n} & \quad \xrightarrow[n\to\infty]{(d)} \quad & ( c   \mu_{0} | \Gamma(1- \alpha)|)^{-1/ \alpha}  \cdot \mathscr{L}_{\alpha}.  \end{eqnarray*}
 \end{corollary}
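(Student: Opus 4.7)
The plan is to reduce the statement to the invariance principle for discrete looptrees stated in \cref{thm:continuity}. By the standard bijection between dissections and plane trees, one associates to every rooted dissection $\omega$ of $P_{n+1}$ its dual tree $\Phi(\omega)$: internal vertices correspond to the faces of $\omega$, an extra leaf is added for each boundary edge of $P_{n+1}$, and a face of degree $k$ corresponds to an internal vertex with $k-1$ children. Under $\mathbb{P}_n^\mu$, the pushforward of $\Phi$ is the law of a critical Galton--Watson tree with offspring distribution $\nu$ satisfying $\nu_0=\mu_0$ and $\nu_k=\mu_{k+1}$ for $k\geq 1$, conditioned on having $n+1$ leaves; in particular $\nu([k,\infty))\sim c\cdot k^{-\alpha}$ as $k\to\infty$.

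A direct inspection of this bijection shows that $\mathcal{D}_n^\mu$ endowed with the graph distance is at uniformly bounded Gromov--Hausdorff distance from the discrete looptree $\mathsf{Loop}(\Phi(\mathcal{D}_n^\mu))$: each face of degree $k$ contributes a cycle of length $k$ in $\mathcal{D}_n^\mu$, and two such cycles are glued precisely when the corresponding faces share a diagonal, which is exactly the gluing rule defining $\mathsf{Loop}$. It therefore suffices to prove the claimed convergence for $n^{-1/\alpha}\cdot\mathsf{Loop}(\tau_n^{\mathrm{leaves}})$, where $\tau_n^{\mathrm{leaves}}$ is a Galton--Watson tree with offspring distribution $\nu$ conditioned on having $n+1$ leaves.

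The main step is to apply \cref{thm:continuity}, which gives convergence of $n^{-1/\alpha}\cdot\mathsf{Loop}(\tau_n)$ when $\tau_n$ is a Galton--Watson tree conditioned on \emph{total size} $n$, with normalized tail $|\Gamma(1-\alpha)|^{-1}k^{-\alpha}$. The main obstacle is twofold. First, the tail constant of $\nu$ is $c$ rather than $|\Gamma(1-\alpha)|^{-1}$; a standard rescaling of the coding Lukasiewicz walk (which lies in the domain of attraction of an $\alpha$-stable law with a scale depending on $c$) produces a factor $(c|\Gamma(1-\alpha)|)^{-1/\alpha}$ in the limit. Second, the tree is conditioned on the number of leaves rather than on the total size. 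Since in a Galton--Watson tree of total size $N$ the number of leaves concentrates around $\nu_0 N=\mu_0 N$, the total size of $\tau_n^{\mathrm{leaves}}$ concentrates around $n/\mu_0$, and rewriting the scaling as $n^{1/\alpha}=(\mu_0 N)^{1/\alpha}$ contributes an additional factor $\mu_0^{-1/\alpha}$. The extension of \cref{thm:continuity} to leaf-conditioned Galton--Watson trees can be made rigorous via a standard absolute-continuity argument comparing conditioning on $n$ leaves with conditioning on total size close to $n/\mu_0$. Combining the two adjustments yields the scaling constant $(c\mu_0|\Gamma(1-\alpha)|)^{-1/\alpha}$ appearing in the corollary.
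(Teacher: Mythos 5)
Your overall route coincides with the paper's (dualize to a conditioned Galton--Watson tree, apply \cref{thm:continuity}, and compare the dissection with the looptree of its dual tree), but two steps contain genuine gaps. First, the claim that $\mathcal{D}_n^\mu$ is at \emph{uniformly bounded} Gromov--Hausdorff distance from $\mathsf{Loop}$ of its dual tree is false. The discrepancy between the two metrics can accumulate by one unit for every face (resp.\ loop) crossed by a geodesic, so the correct deterministic bound is of order the \emph{height} of the dual tree, not a constant; this is exactly the paper's estimate $\mathrm{d_{GH}}(\mathcal{D}_{n}^\mu, \mathsf{Loop}(\tau_{n})) \leq \mathsf{H}(\tau_{n})+2$. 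A fan triangulation of $P_{n+1}$ shows this bound is essentially sharp: all its faces share the apex, so the dissection has diameter $2$, while its dual tree is a path-like caterpillar of height of order $n$ whose looptree is a chain of triangles of diameter of order $n$. Hence your reduction requires the extra input that $\mathsf{H}(\tau_{n})=o(n^{1/\alpha})$ in probability for the leaf-conditioned tree --- precisely the height estimate from \cite{Kor12} that the paper invokes, and which also furnishes hypothesis $(ii)$ of \cref{thm:continuity}, a hypothesis your proposal never verifies.

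Second, the passage from size-conditioning to leaf-conditioning ``by a standard absolute-continuity argument'' is not a valid step as stated: conditioning on having exactly $n$ leaves is a singular conditioning, not absolutely continuous with respect to conditioning the total progeny to be near $n/\mu_0$. Moreover \cref{thm:continuity} is not restricted to size-conditioned trees; it is a general invariance principle whose two hypotheses must be checked, and for leaf-conditioned trees the Lukasiewicz-path convergence and the height estimate are supplied by \cite[Theorems 5.9 and 6.1]{Kor12} --- this is the route the paper takes, with $B_{n}= n^{1/\alpha}(|\Gamma(1-\alpha)|\mu_{0}c)^{-1/\alpha}$ absorbing both the tail constant and the leaf density, as in your constant bookkeeping. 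Finally, a slip in the duality: by \cref{prop:GW}, the dual tree of $\mathcal{D}_n^\mu$ is distributed as $\mathsf{GW}_{\mu}$ conditioned on having $n$ leaves --- the offspring law is $\mu$ itself (a face of degree $d$ carries weight $\mu_{d-1}$ and becomes a vertex with $d-1$ children), not your shifted law $\nu_k=\mu_{k+1}$, which has mean $\mu_0<1$ and is therefore not critical; taken literally, with $\nu$ the stable invariance principle you want to apply would not be available.
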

   
 \begin{figure}[!h]
  \begin{center}
    \includegraphics[height=3cm]{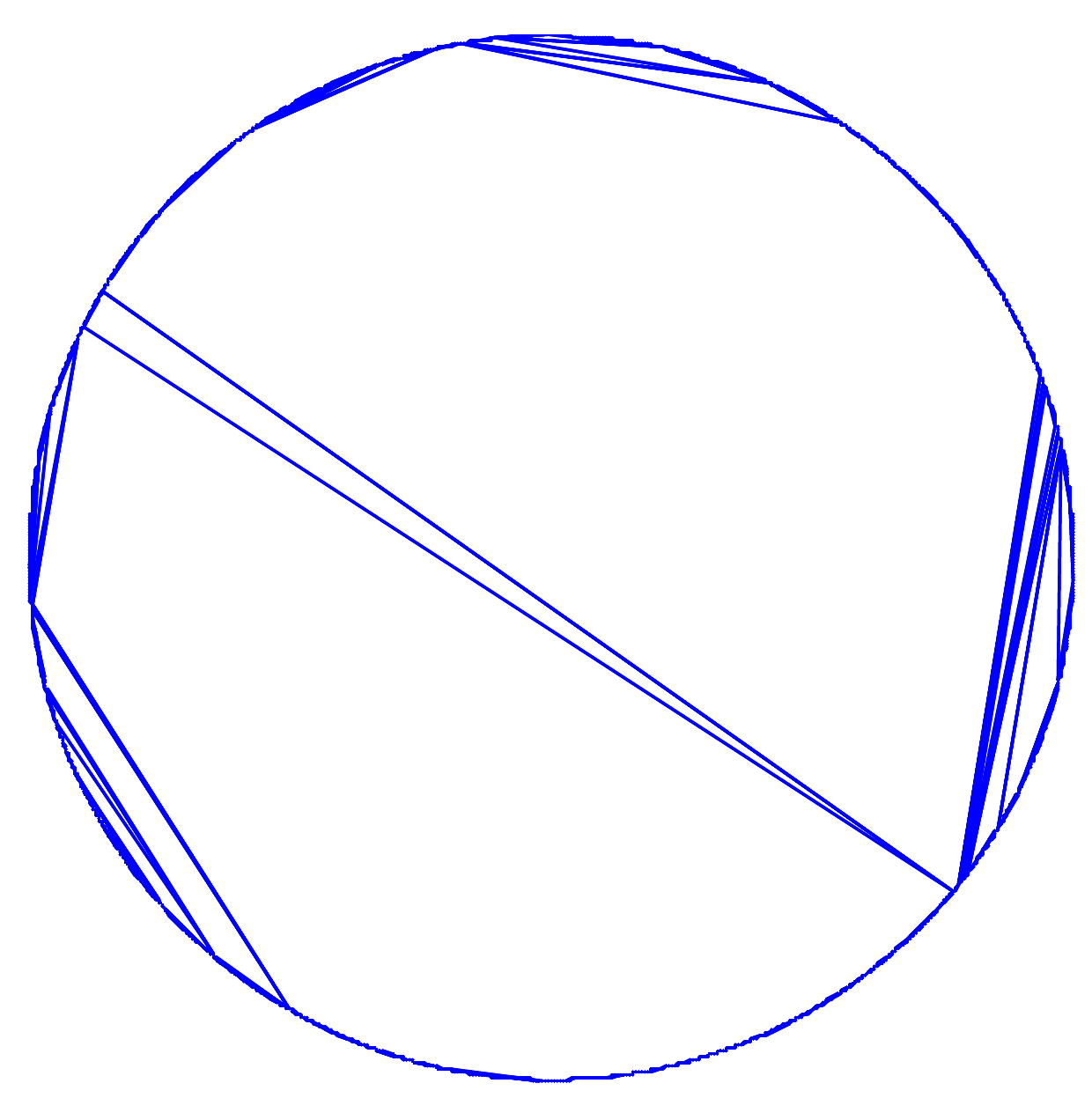} \hspace{1cm}
      \includegraphics[height=3cm]{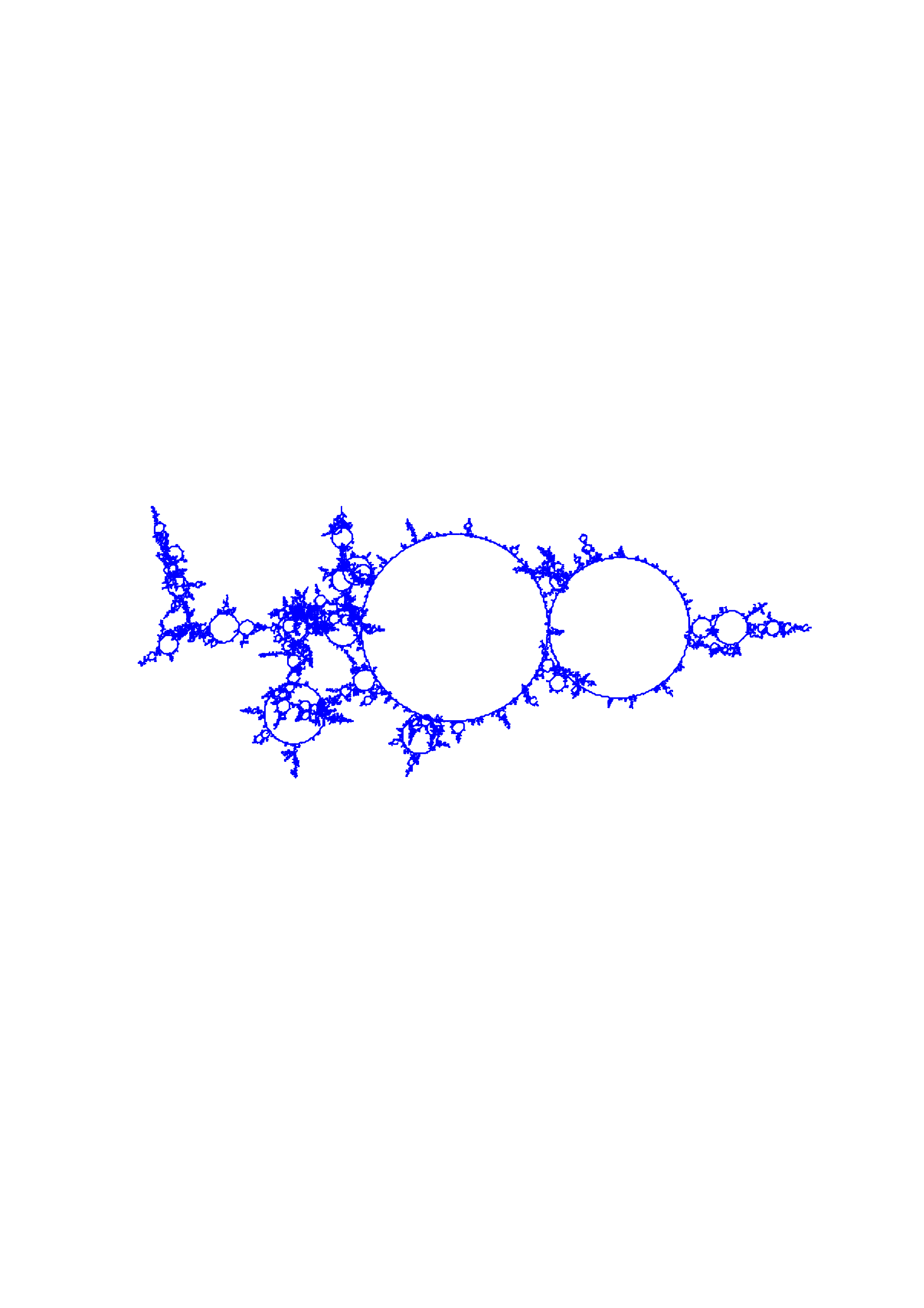}
  \caption{ \label{fig:dissec+looptree} A large dissection and a representation of its metric space.}
  \end{center}
  \end{figure}
\paragraph*{Looptrees in random planar maps.}  Another area where looptrees appear is the theory of random planar maps. The goal of this very active field is to understand large-scale properties of  planar maps or graphs, chosen uniformly in a certain class (triangulations, quadrangulations, etc.), see \cite{AS03,CS04,LGM09,LG11,Mie11}. In a companion paper \cite{CKpercolooptrees}, we prove that the scaling limit of cluster boundaries of critical site-percolation on large random triangulations and the UIPT introduced by Angel \& Schramm \cite{AS03} is $ \mathscr{L}_{3/2}$ {(by \emph{boundary of a cluster}, we mean the graph formed by the edges and vertices of a connected component which are adjacent to its exterior; see  \cite{CKpercolooptrees} for a precise definition and statement)}. We also give a precise conjecture relating the whole family of looptrees $(\mathscr{L}_{ \alpha})_{ \alpha \in (1,2)}$ to cluster boundaries of critical $O(n)$ models on random planar maps. We refer to \cite{CKpercolooptrees} for details.

\paragraph*{Looptrees in preferential attachment.} As another motivation for introducing looptrees, we mention the subsequential work  \cite{CDKM14}, which studies looptrees associated with random trees built by linear preferential attachment, also known in the literature as Barab\'asi--Albert trees or plane-oriented recursive trees.
As the number of nodes grows, it is shown in \cite {CDKM14} that these looptrees, appropriately rescaled, 
converge in the Gromov--Hausdorff sense towards a random compact metric space called the Brownian looptree, which is a quotient space of Aldous' Brownian Continuum Random Tree.

\bigskip

Finally, let us mention that  stable looptrees implicitly appear  in \cite{LGM09}, {where} Le Gall and Miermont {have} considered scaling limits of random planar maps with large faces. The limiting continuous objects (the {so-called} $\alpha$-stable maps) are constructed via a distance process {which} is closely related to looptrees. Informally, the distance process of Le Gall and Miermont is formed by a looptree $ \mathscr{L}_{\alpha}$ where the cycles support independent Brownian bridges of the corresponding lengths. However, the definition and the study of the underlying looptree structure is interesting in itself and has various applications. Even though we do not rely explicitly on the article of Le Gall and Miermont, this work would not have been possible without it.

\paragraph*{Outline.} The paper is organized as follows. In \cref{sec:construction}, we give a precise definition of $ \mathscr{L}_{\alpha}$ using the normalized excursion of the $\alpha$-stable spectrally positive LŽvy process. \cref{sec:properties} is then devoted to the study of stable looptrees, and in particular to the proofs of \cref{thm:dimension,thm:1and2}.  In the last section, we establish a general invariance principle concerning discrete looptrees from which \cref{cor:discretencstable} will follow.

 \section{Defining stable looptrees} \label{sec:construction}

This section is devoted to the construction of  stable looptrees using the normalized excursion of a stable LŽvy process, and to the study {of} their properties. In this section, $\alpha \in (1,2)$ is a fixed parameter.

\subsection{The normalized excursion of a stable LŽvy process}
\label{sec:normexc}

 We follow the presentation of \cite{Du03} and refer to \cite{Ber96} for the proof of the results mentioned here. 
By \emph{$ \alpha$-stable LŽvy process} we will always mean a 
stable spectrally
positive LŽvy process $X$ of index $\alpha$, normalized so that for every
$\lambda>0$
$$\E[\exp(-\lambda X_t)]=\exp(t \lambda^\alpha).$$
The process $X$ takes values in the Skorokhod space $\D(\R_+, \R)$ of
right-continuous with left limits (cˆdlˆg) real-valued functions,
endowed with the Skorokhod topology (see \cite[Chap. 3]{Bil99}). The dependence of $X$ in $\alpha$ will be implicit in this section.  Recall that $X$ enjoys the
following scaling property: For every $c>0$, the process $(c^{-1/\alpha}
X_{ct}, t \geq 0$) has the same law as $X$. Also recall that the LŽvy measure $\Pi$ of $X$ is
 \begin{eqnarray}\Pi(dr)&=&\frac{\alpha(\alpha-1)}{\Gamma(2-\alpha)} r^{-\alpha-1} 1_ {(0,\infty)}dr.   \label{eq:levymeasure} \end{eqnarray}

Following Chaumont \cite{Cha97} we define the normalized excursion of $X$ above its infimum as  the re-normalized excursion of $X$ above its infimum straddling time $1$.  More precisely, set $$\underline{g}_1=\sup \{s \leq 1; \, X_s= \inf_{[0,s]}X \} \quad  \mbox{and} \quad  \underline{d}_1=\inf\{s > 1; \, X_s=\inf_{[0,s]}X\}.$$
Note that  $X_{\underline{d}_{1}}=X_{\underline{g}_{1}}$  since a.s.\,$X$ has no jump at time $\underline{g}_{1}$ and $X$ has no negative jumps . Then the normalized excursion $X^{ \mathrm{exc}}$ of $X$ above its infimum is defined by
   \begin{eqnarray}\label{eq:Xexc}  X^{ \mathrm{exc}}_s&{=}&(\underline{d}_1-\underline{g}_1)^{-1/\alpha} (X_{\underline{g}_1+  s (\underline{d}_1-\underline{g}_1)} - X_{\underline{g}_1}) \qquad  \mbox{ for every } s \in [0,1].  \end{eqnarray}
We shall see later  in \cref{sec:absolutecontinuity} another useful description of $ \X$ using the It™ excursion measure of $X$ above its infimum. Notice that $ \X$ is a.s.\,a random cˆdlˆg function on $[0,1]$ such that $X^{ \mathrm{exc}}_{0}= \X_{1}=0$ and $ \X_{s}>0$ for every $s \in(0,1)$. If $Y$ is a cˆdlˆg function, we set $ \Delta Y_{t}= Y_{t}-Y_{t-}$, and to simplify notation, for $0 < t \leq 1$, we write $$ \Delta_t= X^{\exc}_{t}- X^{\exc}_{t-}$$ and set $ \Delta_{0}=0$ by convention.

\subsection{The stable LŽvy tree}
\label{sec:levytree}

We now discuss the construction of the $ \alpha$-stable tree $ \Ta$, which is closely related to the $ \alpha$-stable looptree. Even though it possible to define $ \La$ without mentioning  $ \Ta$, this sheds some light on the intuition hiding behind the formal definition of looptrees. 

\subsubsection{The stable height process} By the work of Le Gall \& Le Jan \cite{LGLJ98} and Duquesne \& Le Gall \cite{DLG02,DLG05}, it is known that the random excursion $X^{ \mathrm{exc}}$ encodes a random compact $ \mathbb{R}$-tree $ \mathcal{T}_{\alpha}$  called the $\alpha$-stable tree. To define $ \mathcal{T}_{\alpha}$, we need to introduce the \emph{height process} associated with $X^{ \mathrm{exc}}$. We refer to \cite{DLG02} and \cite{DLG05} for details and proofs of the assertions contained in this section. First, for $ 0 \leq s \leq t \leq 1$, set $$I_s^t=\inf_{[s,t]} \X.$$ The height process $ \H$ associated with $\X$ is defined by the approximation formula
 \begin{eqnarray*} \H_{t} & = & \lim_{ \varepsilon \to 0}  \frac{1}{\varepsilon}\int_{0}^t \mathrm{d}s\, \mathbbm{1}_{  \{\X_{s}< I_{s}^t + \varepsilon\}}, \qquad t \in [0,1],  \end{eqnarray*}where the limit exists in probability. The process $ (\H_{t})_{0 \leq t \leq 1}$ has a continuous modification, which we consider from now on. Then $\H$ satisfies $ \H_{0}= \H_{1}=0$ and $ \H_{t}>0$ for $t \in (0,1)$. It is  standard to define the $\mathbb{R}$-tree coded by $ \H$ as follows.  For every $h : [0,1]\to \mathbb{R}_{+}$  and  $ 0 \leq s,t \leq 1$, we set
  \begin{eqnarray} \label{def:dh} \mathrm{d}_{h}(s,t) = h({s})+ h({t})-2  \inf_{[ \min(s,t), \max(s,t)]} h.\end{eqnarray}
Recall that a pseudo-distance $d$ on a set $X$ is a map $d : X \times X \rightarrow \R_{+}$ such that $d(x,x)=0$ and $d(x,y) \leq d(x,z)+d(z,y)$ for every $x,y,z \in X$ (it is a distance if, in addition, $d(x,y)>0$ if $x \neq y$). It is simple to check that $ \mathrm{d}_{h}$ is a pseudo-distance on [0,1]. In the case $h= \H$, for $ x,y \in [0,1]$, set $ x\simeq y$ if $ \mathrm{d}_{ \H}(x,y)=0$. The random stable tree $ \mathcal{T}_{\alpha}$ is then defined as the quotient metric space $\big([0,1]/\simeq, \mathrm{d}_{ \H} \big)$, which indeed is a random compact $ \mathbb{R}$-tree \cite[Theorem 2.1]{DLG05}.  Let $\pi : [0,1] \to \mathcal{T}_{\alpha}$ be the canonical projection. The tree $ \mathcal{T}_{\alpha}$ has a distinguished point $\rho = \pi(0)$, called the \emph{root} or the ancestor of the tree.   If $u,v \in \mathcal{T}_{ \alpha}$, we denote by $\llbracket u,v \rrbracket$ the unique geodesic between $u$ and $v$. This allows us to define a genealogical order on $ \Ta$: For every $u, v \in \Ta$, set $u\preccurlyeq v$ if $u \in \llbracket \rho,v \rrbracket$. If $ u, v \in \Ta$, there exists a unique $z \in \Ta$ such that $ \llbracket \rho, u \rrbracket \cap \llbracket \rho, v \rrbracket = \llbracket \rho, z \rrbracket $, called the \emph{most recent common ancestor} to $u$ and $v$, and is denoted by $z= u \wedge v$. 

\subsubsection{Genealogy of $ \mathcal{T}_{\alpha}$ and $ X^{ \mathrm{exc}}$} 
 The genealogical order of $ \Ta$ can be easily recovered from $\X$ as follows. We define a partial order on $[0,1]$, still denoted by $\preccurlyeq$, which is compatible with the projection $ \pi: [0,1] \rightarrow \Ta$ by setting, for every $ s, t \in [0,1]$,
$$ s \preccurlyeq t \qquad \textrm{if} \qquad s\leq t \quad \mbox{ and }  \quad \X_{s-} \leq I_{s}^ {t},$$
where by convention $ \X_{0-}=0$.
It is a simple matter to check that $ \preccurlyeq$ is indeed a partial order which is compatible with the genealogical order on $ \mathcal{T}_{\alpha}$, {meaning that} a point $a \in \mathcal{T}_{\alpha}$ is an ancestor of $b$ {if and only if} there exist $s \preccurlyeq t \in [0,1]$ with $a=\pi(s) $ and $b=\pi(t)$.  For every $s,t \in [0,1]$, let $s \wedge t$ be the most recent common ancestor (for the relation $ \preccurlyeq$ on $[0,1]$) of $s$ and $t$. {Then} $ \pi(s \wedge t)$ {also is} the most recent common ancestor of $\pi(s)$ an $\pi(t)$ in the tree $ \mathcal{T}_{\alpha}$. 
 
We now recall several  well-known properties of $ \Ta$. By definition, the \emph{multiplicity} (or degree) of a vertex $u \in \mathcal{T}_{\alpha} $ is the number of connected components of $ \mathcal{T}_{\alpha} \backslash \{u\}$. Vertices of $\mathcal{T}_{\alpha} \backslash \{ \rho\}$ which have multiplicity $1$ are called \emph{leaves}, and those with multiplicity at least $3$ are called \emph{branch-points}. By \cite[Theorem 4.6]{DLG05}, the multiplicity of every vertex of $ \Ta$ belongs to $\{ 1,2,\infty\}$. In addition, the branch-points of $ \mathcal{T}_{\alpha}$ are in one-to-one correspondence with the jumps of $ \X$ \cite[Proposition 2]{Mie05}. More precisely, a vertex $u \in \mathcal{T}_{\alpha}$ is a branch-point if and only if there exists a unique $s \in [0,1]$ such that $u=\pi(s)$ and $\Delta X^{ \mathrm{exc}}_{s}= \Delta_{s} >0$. In this case $\Delta_{s}$ intuitively corresponds  to the ``number of children'' (although this does not formally make sense) or \emph{width} of $\pi(s)$. 

We finally introduce a last notation, which will be crucial in the definition of stable looptrees in the next section. If $s,t \in [0,1]$ and $s \preccurlyeq t$, set
$$  x^ {t}_s=I_{s}^ {t}- \X_{s-} \quad \in [0, {\Delta}_{s}].$$ 
Roughly speaking, $x^ {t}_s$ is the ``position'' of the ancestor of $\pi(t)$ among the $\Delta_{s}$ ``children'' of $ \pi(s)$. 

\subsection{Definition of stable looptrees}

Informally, the stable looptree $ \mathscr{L}_{\alpha}$ is obtained from the tree $ \mathcal{T}_{\alpha}$ by replacing every branch-point of width $x$ by a metric cycle of length $x$, and then gluing all these cycles along the tree structure of $ \mathcal{T}_{\alpha}$ (in a very similar way to the construction of discrete looptrees from discrete trees explained in the Introduction, see \cref{fig:frerots,fig:loop}). But making this construction rigorous is not so easy because there are countably many loops (non of them being adjacent).

\bigskip

\label{sec:definition}

Recall that the dependence in $\alpha$ is implicit through the process $X^{ \mathrm{exc}}$. For every $t \in [0,1]$ we equip the segment $[0, \Delta_{t}]$ with the pseudo-distance $ \delta_{t}$ defined by 
 \begin{eqnarray*} \delta_{t}(a,b) &=& \min \big\{|a-b| ,  \Delta_{t}-|a-b|\big\}, \qquad a,b \in [0, \Delta_{t}].\end{eqnarray*}
 Note that if $\Delta_{t}>0$, $([0, \Delta_{t}), \delta_{t})$ is isometric to a metric cycle of length $ \Delta_{t}$ (this cycle will be associated with the branch-point $ \pi(t)$ in the looptree $ \mathscr{L}_{\alpha}$, as promised in the previous paragraph).

For $s \leq t \in [0,1]$, we write $ s \prec t$ if $ s \preccurlyeq t$ and $s \neq t$. It is important to keep in mind that $ \prec$ \emph{does not} correspond to the strict genealogical order in $ \mathcal{T}_{\alpha}$ since there  exist $s \prec t$ with $\pi(s)=\pi(t)$. The stable looptree $ \mathscr{L}_{\alpha}$ will be defined as the quotient of $[0,1]$ by a certain pseudo-distance $d$ involving $ \X$, which we now define. First, if $ s \preccurlyeq t$, set 
 \begin{eqnarray} \label{d0} {d}_{0}(s,t) &=&   \sum_{s \prec r \preccurlyeq t} \delta_{{r}}( 0, {x_{r }^ {t}}).  \end{eqnarray}
In the last sum, only  jump times give a positive contribution, since $ \delta_{{r}}( 0, {x_{r }^ {t}})=0$ when $ \Delta_{r}=0$. Note that even if $t$ is a jump time, its contribution in \eqref{d0} is null since $\delta_{t}(0, x_{t}^{t})= 0$ and we could have summed over $s \prec r \prec t$. Deliberately, we do not allow $r=s$ in \eqref{d0}. Also, it could happen that there is no $r \in (s,t]$ such that both $ s \prec r$ and $r \preccurlyeq t$ (e.g. when $s=t$) in which case the sum \eqref{d0} is equal to zero. Heuristically, if $s \prec r \preccurlyeq t$, the term $ \delta_{{r}}( 0, {x_{r }^ {t}})$ represents the length of the portion of the path going from (the images in the looptree of) $s$ to $t$ belonging to the loop coded by the branch-point $r$ (see \cref{fig:explan}).  Then, for every $ s,t \in [0,1]$, set
  \begin{eqnarray} \label{eq:def2}{d}(s,t) &=& \delta_{{s \wedge t}}\big( {x_{s\wedge t }^ {s}}, {x_{s\wedge t }^ {t}}\big) + {d}_{0}( s \wedge t,s)+ {d}_{0}( s \wedge t,t).  \end{eqnarray}
  \begin{figure}[!h]
 \begin{center}
 \includegraphics[width= 0.75 \linewidth]{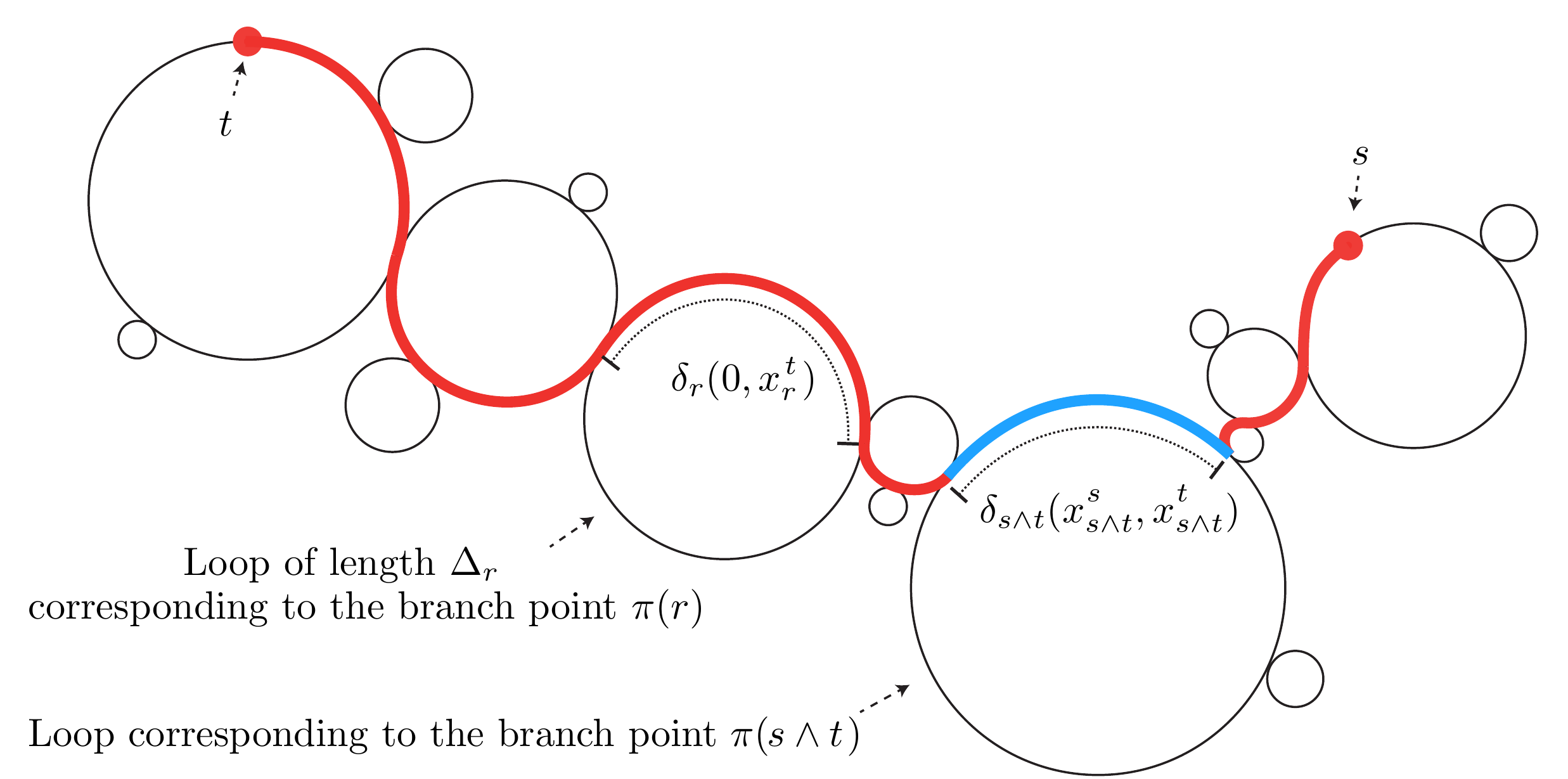}
 \caption{Illustration of the definition of $d$. The geodesic between the images of $s$ and $t$ in the looptree is in bold. Here, $ s \wedge t\prec r \prec t$. This is a simplified picture since in stable looptrees no loops are adjacent.}
 \label{fig:explan}
 \end{center}
 \end{figure}
 
 Let us give an intuitive meaning to this definition. The distance ${d}(s,t)$ contains contributions given by loops which correspond to branch-points belonging to the geodesic  $ \llbracket \pi(s ), \pi(t) \rrbracket$ in the tree: the third (respectively second) term of the right-hand side of \eqref{eq:def2} measures the contributions from branch-points belonging to the interior of $ \llbracket \pi(s \wedge t), \pi(t) \rrbracket$ (respectively  $ \llbracket \pi(s \wedge t), \pi(s) \rrbracket$), while the term $\delta_{{s \wedge t}}( {x_{s\wedge t }^ {s}}, {x_{s\wedge t }^ {t}}) $ represents the length of the portion of the path going from (the images in the looptree of) $s$ to $t$  belonging to the (possibly degenerate) loop coded by $ \pi(s \wedge t)$ (this term is equal to $0$ if $ \pi(s \wedge t)$ is not a branch-point), see  \cref{fig:explan}.

 In particular, if $s \preccurlyeq t$, note that
  \begin{eqnarray} \label{eq:def1}{d}(s,t) &=&\delta_{s}({ 0},x_{s }^ {t})+ {d}_{0}(s,t)\quad =\quad    \sum_{s \preccurlyeq r \preccurlyeq t} \delta_{{r}}( 0, {x_{r }^ {t}}) .  \end{eqnarray}

\begin {lemma} [Bounds on $d$]\label{lem:controls} Let $r,s, t \in [0,1]$. Then:
\begin{enumerate}[(i)]
\item (Lower bound) If $s \prec r \prec t$,  we have $d(s,t) \geq  {\min}(x_{r}^{t} , \Delta_{r}-x_{r}^{t})$.
\item (Upper bound) If $s<t$,  we have $d(s,t) \leq  \X_{s}+\X_{t-}- 2 I_{s}^t$.
\end{enumerate}
\end {lemma}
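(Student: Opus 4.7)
For (i), the key observation is that $s \prec r \prec t$ implies $s \preccurlyeq t$ by transitivity of $\preccurlyeq$, so (\ref{eq:def1}) applies. Since every summand in $d(s,t) = \sum_{s \preccurlyeq r' \preccurlyeq t} \delta_{r'}(0, x_{r'}^t)$ is nonnegative and $r$ lies in the range of summation, isolating this single term yields $d(s,t) \geq \delta_r(0, x_r^t) = \min(x_r^t, \Delta_r - x_r^t)$.

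For (ii), the plan is to argue by case analysis according to whether $s \preccurlyeq t$. Assume first that $s \preccurlyeq t$ and apply (\ref{eq:def1}). I would bound the $r = s$ contribution by $\delta_s(0, x_s^t) \leq \Delta_s - x_s^t = \X_s - I_s^t$ (taking the ``long arc'' on the loop, permissible since $x_s^t \leq \Delta_s$), while each jump-time ancestor $r$ of $t$ lying in $(s,t)$ is bounded by $\delta_r(0, x_r^t) \leq x_r^t = I_r^t - \X_{r-}$. Enumerate these ancestors in increasing order $s < r_1 < r_2 < \cdots < r_k < t$; the elementary inequalities $I_{r_i}^t \leq \X_{r_{i+1}-}$ for $i<k$ and $I_{r_k}^t \leq \X_{t-}$ (obtained because left limits of $\X$ are $\geq$ the infimum on the corresponding interval) make the sum telescope to at most $\X_{t-} - \X_{r_1-}$, which is in turn bounded by $\X_{t-} - I_s^t$ thanks to $\X_{r_1-} \geq I_s^t$. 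Adding the $r=s$ contribution gives $d(s,t) \leq \X_s + \X_{t-} - 2 I_s^t$.

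Now assume $s \not\preccurlyeq t$ and set $u = s \wedge t$; then $u < s$, and I use the full decomposition (\ref{eq:def2}). Applying the same telescoping argument on each side of $u$ produces $d_0(u, s) \leq \X_{s-} - I_u^s$ and $d_0(u, t) \leq \X_{t-} - I_u^t$, while the loop term is bounded by $\delta_u(x_u^s, x_u^t) \leq |x_u^s - x_u^t| = I_u^s - I_u^t$, the last equality using $I_u^s \geq I_u^t$ since $s \leq t$. Summing these three estimates yields $d(s,t) \leq \X_{s-} + \X_{t-} - 2 I_u^t$.

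The main obstacle is the identity $I_u^t = I_s^t$, after which the bound becomes $\X_{s-} + \X_{t-} - 2 I_s^t \leq \X_s + \X_{t-} - 2 I_s^t$. I would prove this identity by contradiction: suppose $I_u^t < I_s^t$. Since $I_u^t = \min(I_u^s, I_s^t)$ this forces $I_u^s < I_s^t$, so $\X$ takes some value strictly below $I_s^t$ inside $[u, s]$. Picking the rightmost such time $\tau \in [u, s]$ (either with $\X_\tau < I_s^t$, or with $\X_{\tau-} < I_s^t \leq \X_\tau$ in the case of a jump crossing the threshold at $\tau$), a short c\`adl\`ag analysis shows that $\tau > u$ and that $\tau$ is a $\preccurlyeq$-ancestor of $t$, which contradicts the maximality of $u = s \wedge t$ among common ancestors of $s$ and $t$ in $[0,s]$. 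The case split and this MRCA identity are the main technical points; the telescoping argument itself is straightforward once the bound on each $\delta_r$ is chosen appropriately (the ``short arc'' $x_r^t$ for interior branch-points and the ``long arc'' $\Delta_s - x_s^t$ at $r = s$ in Case A).
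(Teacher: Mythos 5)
Your proposal is correct and follows essentially the same route as the paper's proof: part (i) by isolating a single term of \eqref{eq:def1}, and part (ii) by the same case split at $s\wedge t$, the same telescoping bound on $d_{0}$ using $\delta_{r}(0,x_{r}^{t})\leq x_{r}^{t}=I_{r}^{t}-\X_{r-}$ (with the long arc only at $r=s$), and the bound $\delta_{s\wedge t}(x_{s\wedge t}^{s},x_{s\wedge t}^{t})\leq I_{s\wedge t}^{s}-I_{s\wedge t}^{t}$. The only substantive difference is that you spell out (correctly) the identity $I_{s\wedge t}^{t}=I_{s}^{t}$ via the c\`adl\`ag/ancestor argument, a point the paper dismisses as ``easy to check''.
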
 

\proof
The first assertion is obvious from the definition of $d$ :
$$d(s,t) \geq  \delta_{r}(0,x_{r}^ {t}) \geq {\min}( x_{r}^{t} , \Delta_{r}-x_{r}^{t})$$
For $(ii)$,  let us first prove that if $s \prec t$ then \begin{equation}
\label{eq:ut}d_{0}(s,t) \leq \X_{t-}-I_{s}^t.
\end{equation} (Note that $ \X_{t-}-I_{s}^t \geq 0$ because $s \neq t$.) To this end, remark that if $s \preccurlyeq r \preccurlyeq t$ and $ s \preccurlyeq r' \preccurlyeq t$, then $ r \preccurlyeq r'$ or $ r' \preccurlyeq r$. It follows that 
 if  $ s  \prec r_0 \prec r_1 \prec \cdots \prec r_n = t$, using the fact that $I_{r_{{i}}}^{r_{{n}}}=I_{r_{{i}}}^{r_{{i+1}}}$ for $0 \leq i \leq n-1$, we have 
\begin{eqnarray*}
 \sum_{i=0}^  {n} \delta_{{r_i}}( 0,  {x_{ r_i }^ {r_n}}) & \leq &   \sum_{i=0}^  {n-1} {x_{ r_i }^ {r_n}} + { \delta_{r_{n}}(0,x_{r_{n}}^{r_{n}})} \\ 
  & = &  \sum_{i=0}^  {n-1} \left( I^  {r_{ {i+1}}}_ {r_ {i}} - \X_{r_ {i}-}\right) + 0 \\ & \leq&  \sum_{i=0}^  {n-1} \left( \X_ { r_ { {i+1}}-} - \X_ {r_ {i}-}\right)  = \X_ { r_ { {n}}-}- \X_ {r_ {0}-} 
  \leq \X_{t-}- I_{s}^t,
\end{eqnarray*}
where for the last inequality we have used the fact that  $I_{s}^{t} \leq   \X_{r_{0}-}$ since $s< r_{0} <t$.
Since  $d_{0}(s,t) = \sum_{s \prec r \preccurlyeq t} \delta_{r}(0,x_{r}^{t})$, this gives \eqref{eq:ut}. 

Let us return to the proof of $(ii)$. Let $s <t$. If $ s \prec t$, then by \eqref{eq:def1} and treating the jump at $s$ separately we can use \eqref{eq:ut} to get \begin{eqnarray*} d(s,t)&=&\delta_{s}(0,x_{s }^ {t})+d_{0}(s,t) \\ 
&\leq& { (\Delta_{s}-x_{s}^{t}) +  (\X_{t-}-I_{s}^t)} \\
&=& ( \X_{s-}+ \Delta_{s}- I_{s}^{t})+(\X_{t-}-I_{s}^t)= \X_{s}+\X_{t-}- 2 I_{s}^t.  \end{eqnarray*}
Otherwise $s \wedge t < s$. It is then easy to check that  
$I_{s}^t = I_{s \wedge t}^t$. In addition, $ \delta_{s\wedge t}( x_{s \wedge t}^{t}, x_{s \wedge t}^{s}) \leq {x_{s\wedge t }^ {s}}-{x_{s\wedge t }^ {t}}= I_{s \wedge t}^s-I_{s \wedge t}^t = I_{s \wedge t}^s - I_{s}^t$. Then by \eqref{eq:def2} and \eqref{eq:ut} we have
$$d(s,t)  \leq  I_{s \wedge t}^s - I_{s}^t+( \X_{t-}-I_{s\wedge t}^t)+ ( \X_{s-}-I_{s \wedge t}^s) = \X_{{s-}}+\X_{t-}- 2 I_{s}^t.$$
This completes the proof. \endproof

\begin {proposition} \label{prop:pseudodistance}Almost surely, the function $ d( \cdot, \cdot): [0,1] \times [0,1]  \rightarrow \R_+$ is a continuous pseudo-distance.
\end {proposition}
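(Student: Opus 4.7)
My plan is to verify, in order, that $d$ is non-negative, symmetric, vanishes on the diagonal, satisfies the triangle inequality, and is continuous. The first three are immediate from the definitions: non-negativity is obvious, and finiteness of $d$ is given by \cref{lem:controls}$(ii)$, which yields $d(s,t)\leq 2\sup_{[0,1]} \X < \infty$. Symmetry follows from $s\wedge t = t \wedge s$ and the manifest symmetry of \eqref{eq:def2}. For $d(s,s)=0$, taking $s=t$ in \eqref{eq:def2} gives $s\wedge s=s$, $\delta_s(x_s^s,x_s^s)=0$, and the two $d_0(s,s)$ terms are sums over the empty set $\{r:s\prec r\preccurlyeq s\}$.

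The main step is the triangle inequality $d(s,u)\leq d(s,t)+d(t,u)$. It rests on the ``four-point'' property of $\preccurlyeq$: among the three meets $s\wedge t$, $s\wedge u$, $t\wedge u$, exactly two coincide at some $v\in[0,1]$, and the third, say $w$, satisfies $v\preccurlyeq w$. This splits into three cases labelled by which of $s,t,u$ is ``separate'' in $\Ta$; the case where $u$ is separate is symmetric to the case where $s$ is separate via $s\leftrightarrow u$, so it suffices to treat $(A)$: $v=s\wedge t=s\wedge u$, $w=t\wedge u$; and $(B)$: $v=s\wedge t=t\wedge u$, $w=s\wedge u$. The key combinatorial ingredient is the additivity identity
\[
d_0(v,y)\;=\;d_0(v,w)\;+\;\delta_w(0,x_w^y)\;+\;d_0(w,y)\qquad\text{whenever }v\preccurlyeq w\preccurlyeq y,
\]
obtained by splitting the sum \eqref{d0} at $w$, combined with the elementary fact that $x_r^y=x_r^w$ whenever $r\preccurlyeq w\preccurlyeq y$ (which follows from $I_r^y=I_r^w\wedge I_w^y$ and $\X_{w-}\leq I_w^y$).

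Applying this identity to $y\in\{t,u\}$ in case $(A)$, and using $x_v^t=x_v^u$ (the same observation applied at level $v$, when $v\prec w$), the inequality reduces after cancellation to
\[
\delta_w(0,x_w^u)-\delta_w(0,x_w^t)\;\leq\;\delta_w(x_w^t,x_w^u)\;+\;2d_0(w,t),
\]
which is the triangle inequality for the circle distance $\delta_w$ together with $d_0\geq 0$. Case $(B)$ similarly reduces to $\delta_w(x_w^s,x_w^u)\leq\delta_w(0,x_w^s)+\delta_w(0,x_w^u)$, again a triangle inequality on a circle. The degenerate sub-cases ($v=w$, or $v$ coinciding with one of $s,t,u$) are handled by the same identities and only produce easier inequalities.

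For continuity of $d:[0,1]^2\to\R_+$, the triangle inequality gives $|d(s',t')-d(s,t)|\leq d(s',s)+d(t',t)$, so it suffices to show $d(s',s)\to 0$ when $s'\to s$. For $s'>s$, \cref{lem:controls}$(ii)$ bounds $d(s,s')$ by $\X_s+\X_{s'-}-2I_s^{s'}$; as $s'\downarrow s$, both $\X_{s'-}$ and $I_s^{s'}$ tend to $\X_s$ by right-continuity of $\X$. Symmetrically, for $s'<s$ we have $d(s',s)\leq\X_{s'}+\X_{s-}-2I_{s'}^s$, which tends to $0$ as $s'\uparrow s$ since $\X$ admits left limits at $s$ and $\X_s\geq\X_{s-}$. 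The main obstacle in the proof is the triangle inequality: it relies on the additivity identity for $d_0$ and on careful bookkeeping of which branch-points contribute to each of the three distances in play.
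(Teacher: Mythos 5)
Your proof is correct and follows essentially the same route as the paper: finiteness and the continuity estimate both come from \cref{lem:controls}~$(ii)$ together with the c\`adl\`ag regularity of $\X$ (using $\X_{s}\geq \X_{s-}$), exactly as in the paper's argument. The only difference is that the paper declares the triangle inequality a ``straightforward but cumbersome'' consequence of \eqref{eq:def2} and leaves it to the reader, whereas you actually carry out that verification (splitting $d_{0}$ at an intermediate ancestor, using $x_{r}^{y}=x_{r}^{w}$ for $r\prec w\preccurlyeq y$, and reducing to the triangle inequality for the cycle pseudo-metrics $\delta_{w}$), which is a sound way to fill in the omitted details.
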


\begin{proof}By definition of $d$ and  \cref{lem:controls},  for every $s,t \in  [0,1]$, we have $d(s,t) \leq 2  \sup X^{ \mathrm{exc}} < \infty$. The fact that $d$ satisfies the triangular inequality is a straightforward but cumbersome consequence of its definition \eqref{eq:def2}. We leave the details to the reader. 

Let us now show that  the function $ d( \cdot, \cdot): [0,1]  \times [0,1]  \rightarrow \R_+$ is continuous. To this end, fix $ (s,t) \in [0,1]^2$ and let $ s_{n},t_{n}  (n \geq 1)$ be real numbers in $[0,1]$ such that $(s_{n}, t_{n}) \rightarrow (s,t)$ as $ n \rightarrow \infty$. The triangular inequality entails
$$ \left| d(s,t)- d(s_{n},t_{n}) \right| \leq d(s,s_{n})+d(t,t_{n}).$$
By symmetry, it is sufficient to show that $d(s,s_{n}) \rightarrow 0$ as $n \rightarrow \infty$. Suppose for a moment that $s_{n} \uparrow s$ and $s_{n}<s$, then by \cref{lem:controls}  $(ii)$ we have 
$$d (s_{n},s) \leq \X_{s_{n}}+ \X_{s-}-2 I_{s_{n}}^{s}  \quad\mathop{\longrightarrow}_{n \rightarrow \infty} \quad  \X_{s-}+\X_{s-}-2\X_{s-} =0.$$
The other case when  $s_{n} \downarrow s$ and $s_{n}<s$ is treated similarly. This proves the proposition.\end {proof}

We are finally  ready to define the looptree coded by $X^ \mathrm{exc}$. \begin{definition} \label{def:looptree} For $ x,y \in [0,1]$, set $ x\sim y$ if $ {d}(x,y)=0$. 
 The random stable looptree of index $\alpha$ is defined as the quotient metric space 
$$ \mathscr{L}_\alpha \quad = \quad \big([0,1]/\sim, {d} \big).$$
  \end{definition}
We will denote by $ \mathbf{p}$ the canonical projection $ \mathbf{p}: [0,1] \rightarrow \La$.   Since $d : [0,1]  \times [0,1] \to \mathbb{R}_{+}$ is a.s.\,continuous by \cref{prop:pseudodistance}, it immediately follows that $ \mathbf{p}: [0,1] \rightarrow \mathscr {L}_{ \alpha}$ is a.s.\,continuous. The metric space $ \mathscr{L}_\alpha$ is thus a.s.\,compact, as the image of a compact metric space by an a.s.\,continuous map. 

\medskip

With this definition, it is maybe not clear why $ \La$ contains loops. For sake of clarity, let us give an explicit description of these. Fix $s \in [0,1]$ with $\Delta_{s} >0$, and for $u \in [0,\Delta_{s}]$ let $s_{u} = \inf\{ t \geq s : X^{ \mathrm{exc}}_{t} = X_{s}^{ \mathrm{exc}}- u\}$. It is easy to check that the image of $\{s_{u}\}_{u \in [0, \Delta_{s}]}$ by $ \mathbf{p}$ in $ \mathscr{L}_{\alpha}$ is isometric to a circle of  length $ \Delta_{s}$, which corresponds to the loop attached to the branch-point  $ \pi(s)$ in the tree $ \Ta$.

To conclude this section, let us mention that it is possible to construct $ \mathscr{L}_{\alpha}$ directly from the stable tree $ \mathcal{T}_{ \alpha} $ in a measurable fashion. For instance, if $u= \pi(s)$, one can recover the jump $ \Delta_{s}$ as follows (see \cite[Eq.~(1)]{Mie05}):
 \begin{eqnarray}\label{eq:width}  \Delta_{s}  & \overset{a.s.}{=}&  \lim_{ \varepsilon \rightarrow 0}  \frac{1}{ \varepsilon} \mathsf{Mass}  \left\{  v \in  \mathcal{T}_{ \alpha}; d_{ \mathcal{T}_{\alpha} }(u,v) < \varepsilon\right\}, \end{eqnarray} where $ \mathsf{Mass}$ is the push-forward of the Lebesgue measure on $[0,1]$ by the projection $ \pi : [0,1] \rightarrow  \mathcal{T}_{ \alpha}$. 
However, we believe that our definition of $ \mathscr{L}_{\alpha}$ using LŽvy processes is simpler and more amenable to computations (recall also that the stable tree is itself defined by the height process $H^{ \mathrm{exc}}$ associated with $X^{ \mathrm{exc}}$).

\section{Properties of  stable looptrees}
\label{sec:properties}

The goal of this section is to prove  \cref{thm:dimension,thm:1and2}. Before doing so, we introduce some more background on spectrally positive stable LŽvy processes. This will be our toolbox for studying fine properties of looptrees. The interested reader should consult \cite{Ber92,Ber96,Cha97} for additional details. 

Let us stress that, to our knowledge, the limiting behavior of the normalized excursion of $ \alpha$-stable spectrally positive LŽvy processes  as $ \alpha \downarrow 1$ (\cref{prop:cvsautexc}) seems to be new.

\subsection{More on stable processes} 

\subsubsection{Excursions above the infimum} \label{sec:excmin}
In \cref{sec:normexc}, the normalized excursion process $X^ \mathrm{exc}$ has been introduced as the normalized excursion of $X$ above its infimum straddling  time $1$. Let us present another definition $X^ \mathrm{exc}$ using It™'s excursion theory (we refer to \cite[Chapter IV]{Ber96} for details). 

If $X$ is an  $\alpha$-stable spectrally positive LŽvy process, denote by $\underline{X}_{t}= \inf\{ X_{s} : 0 \leq s \leq t\}$ its running infimum process. Note that  $\underline{X}$ is continuous since $X$ has no negative jumps. The process $X - \underline{X}$ is  strong Markov  and $0$ is regular for itself, allowing the use of excursion theory. We may and will choose $- \underline{X}$ as the
local time of $X- \underline{X}$ at level $0$.  Let $(g_j,d_j), j \in \mathcal{I}$
be the excursion intervals of $X- \underline{X}$ away from $0$. For every $j \in \mathcal{I}$ and $ s \geq  0$, set $\omega_s^j= X_{(g_j+s) \wedge d_j}-X_{g_j}$.
We view $ \omega^j$ as an element of the excursion space $ \mathcal {E}$,  defined by:
$$ \mathcal{E}=  \{ \omega \in \D(\R_+, \R_+); \, \omega(0)=0 \textrm{ and } \z( \omega):=
 \sup\{s>0; \omega(s)>0\,\} \in (0, \infty)  \}.$$
If $\omega\in \mathcal{E}$, we call $ \zeta( \omega)$ the lifetime of the excursion $ \omega$. From It™'s excursion theory, the point measure
$$\mathcal{N}( dt d \omega)= \sum_{j \in \mathcal{I}} \delta_{(-\underline{X}_{g_j},\omega^j)}$$
is a Poisson measure with intensity $dt \underline{n}(d\omega)$, where $\underline{n}(d
\omega)$ is a $\sigma$-finite measure on the set $ \mathcal{E}$ called the It™ excursion measure. This measure admits the following scaling property. For every $\lambda>0$, define $S^{(\lambda)}: \mathcal{E} \rightarrow \mathcal{E}$
 by $S^{(\lambda)}(\omega)=\left( \lambda^{1/ \alpha} \omega(s/\lambda), \,  s \geq 0 \right)$. Then (see \cite{Cha97}
or \cite[Chapter VIII.4]{Ber96} for details) there exists a unique collection of probability measures
$(\underline{n}_{(a)}, a>0)$ on the set of excursions such that the
following properties hold:
\begin{enumerate}[(i)]
\item[$(i)$] For every $a>0$, $\underline{n}_{(a)}(\z=a)=1$.
\item[$(ii)$] For every $\lambda>0$ and $a>0$, we have
$S^{(\lambda)}(\underline{n}_{(a)})=\underline{n}_{(\lambda a)}$.
\item[$(iii)$] For every measurable subset $A$ of the set of all excursions: $$ \underline{n}(A)= \int_0 ^\infty \underline{n}_{(a)}(A) \frac{da}{\alpha \Gamma(1-1/\alpha)
a^{1/\alpha+1}}.$$
\end{enumerate}
In addition, the probability distribution $\underline{n}_{(1)}$, which is supported on the cˆdlˆg paths with unit
lifetime, coincides with the law of $ \X$ as defined in \cref{sec:normexc}, and is also denoted by $ \underline{n}( \cdot | \zeta=1)$.  Thus, informally, $ \underline{n}( \cdot | \zeta=1)$ is the law of an
excursion under the It™ measure conditioned to have unit lifetime.

 \subsubsection{Absolute continuity relation for $X^ \mathrm{exc}$} \label{sec:absolutecontinuity}
 
We will use a path transformation due to Chaumont \cite{Cha97} relating the bridge of a stable LŽvy process to its normalized excursion, which generalizes the Vervaat transformation in the Brownian case. If $U$ is a uniform variable over $[0,1]$ independent of $X^{ \mathrm{exc}}$, then the process {$X^{ \mathrm{br}}$ defined by}$$ X^{ \mathrm{br}}_{t} =  \left\{ \begin{array}{ll} X^{ \mathrm{exc}}_{U+t} &\mbox{ if } U+t \leq 1,\\ X^{ \mathrm{exc} 	}_{t+U-1} & \mbox{ if } U +t  >1. \end{array}  \right.\quad \mbox{for } t \in [0,1]$$ is distributed {according to} the bridge of the stable process $X$, which
can informally be seen as the {process} $(X_t; \, 0 \leq t \leq 1)$
conditioned to be at level zero at time one. See \cite[Chapter
VIII]{Ber96} for definitions. In the other direction, to get $ X^{ \mathrm{exc}}$ from $X^{ \mathrm{br}}$ we just re-root $ X^{ \mathrm{br}}$ {by performing a cyclic shift} at the (a.s.\,unique) time $u_{ \star}( X^ \br)$ where it attains its minimum. 
 
We finally state an absolute continuity property between $ X^ \br$ and $ \X$. Fix $a \in (0,1)$. Let $F: \D([0,a],\R) \rightarrow \R $ be a bounded continuous function. We have (see  \cite[Chapter
VIII.3, Formula (8)]{Ber96}):
$$\Es{F\left( X^{\br}_t; \, 0 \leq t \leq a \right)}=\Es{F\left( X_t; \, 0 \leq t \leq a \right)
\frac{p_{1-a}(-X_a)}{p_1(0)}},$$
where $p_{t}$ is the density of $X_{t}$. 
Note that by  time reversal, the law of $(X_{1}-X_{(1-t)-})_{0 \leq t \leq 1}$ satisfies the same property. 

The previous two results will be used in order to reduce the proof of a statement concerning $ \X$ to a similar statement involving $X$ (which is usually easier to obtain). More precisely, a property concerning $X$ will be first transferred to $ X^ \br$ by absolute continuity, and then to $ \X$ by using the Vervaat transformation.

\subsubsection{Descents}
\label{sec:descents}

 Let $Y:  { \R} \rightarrow \R$ be cˆdlˆg function. For every   { $s,t \in \R$}, we write $s \preccurlyeq_{Y} t$ if and only if $s \leq t$ and $Y_{s-} \leq \inf_{[s,t]} Y$, and in this case we set $$ x_{s}^{t}(Y) = \inf_{[s,t]} Y- Y_{s-} \geq 0, \quad \mbox{and} \quad u_{s}^{t}(Y) = \frac{x_{s}^{t}(Y)}{ \Delta Y_{s}} \in [0,1].$$ 
 We write $s \prec_{Y} $ if $s \preccurlyeq_{Y} t$ and $s \ne t$. When there is no ambiguity, we write $ x_{s}^t$ instead of $x_{s}^t(Y)$, etc. For $t   { \in \R}$, the collection $\{(x_{s}^{t}(Y), u_{s}^{t}(Y)) : s  \preccurlyeq t\}$  is called the \emph{descent} of $t$ in $Y$.  As the reader may have noticed, this concept is crucial in the definition of the distance involved in the definition of stable looptrees. 
 
We will describe the law of the descents (from a typical point) in an $\alpha$-stable LŽvy process by using excursion theory. To this end, denote $\overline{X}_{t}= \sup\{ X_{s} : 0 \leq s \leq t\}$ the running supremum process of $X$.  The process $\overline{X}-X$ is  strong Markov and $0$ is regular for itself. Let $({L}_{t},t \geq 0)$ denote a local time of $\overline{X}-X$ at level $0$, normalized in such a way that $ \Es{ \exp(- \lambda \overline{X}_{L^{-1}(t)})}= \exp(- t \lambda^{ \alpha-1})$. Note that by  \cite[Chapter VIII, Lemma 1]{Ber96}, ${L}^{-1}$ is a stable subordinator of index $1-1/ \alpha$. Finally, to simplify notation, set $ \mathrm{x}_{s}=\overline{X}_{s}-\overline{X}_{s-}$ and $ \mathrm{u}_{s}= \frac{\overline{X}_{s}-\overline{X}_{s-}}{\overline{X}_{s}-{X}_{s-}}$ for every $s \geq 0$ such that $\overline{X}_{s}> \overline{X}_{s-}$. {In order to describe the law of descents from a fixed point in an $\alpha$-stable process we need to introduce the two-sided stable process. If $X^1$ and $X^2$ are two independent stable processes on $ \R_{+}$, set $X_{t}=X^1_{t}$ for $ t \geq 0$ and $X_{t}=-X^2_{(-t)-}$ for $t<0$.}

\begin{proposition}  \label{prop:itomeasure}The following assertions hold.
\begin{enumerate}
 \item[$(i)$] Let $(X_{t} : t \in \mathbb{R})$ be a two-sided spectrally positive $\alpha$-stable process. Then the collection
$$\{(-s,x_{s}^0(X),u_{s}^0(X)) :  s \preccurlyeq 0\}$$ has the same distribution as
$$ \left\{(s, \mathrm{x}_{s}, \mathrm{u}_{s}) ; s \geq 0 \textrm{ s.t. }\overline{X}_{s}> \overline{X}_{s-} \right\}.$$ 
 \item[$(ii)$] The point measure
\begin{equation}
\label{eq:poisson}\sum_{ \overline{X}_{s}> \overline{X}_{s-}} \delta_{(L_{s},  \frac{\mathrm{x}_{s}}{ \mathrm{u}_{s}},  \mathrm{u}_{s})}
\end{equation}
is a Poisson point measure with intensity $dl \cdot x\Pi(dx) \cdot \mathbbm{1}_{[0,1]}(r)dr$. 
  \end{enumerate}

 \end{proposition}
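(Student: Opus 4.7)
Part (i) is a direct duality computation. For $\sigma\ge 0$, set $\check{X}_\sigma:=X^2_\sigma$, so that for $\sigma>0$ one has $X_{-\sigma}=-\check{X}_{\sigma-}$, $X_{(-\sigma)-}=-\check{X}_\sigma$ and in particular $\Delta X_{-\sigma}=\Delta\check{X}_\sigma$. The first step is to compute
$$\inf_{[-\sigma,0]}X \;=\; -\sup_{u\in(0,\sigma]}\check{X}_{u-}\;=\;-\overline{\check{X}}_{\sigma-},$$
where $\overline{\check{X}}$ is the running supremum of $\check{X}$. The condition $-\sigma\preccurlyeq 0$ in $X$ thus reads $\check{X}_\sigma\ge\overline{\check{X}}_{\sigma-}$; combined with $\Delta X_{-\sigma}>0$, this is almost surely equivalent to $\overline{\check{X}}_\sigma>\overline{\check{X}}_{\sigma-}$, i.e.\ to $\sigma$ being a jump time of the supremum of $\check{X}$ (the degenerate case of a jump landing exactly on the previous supremum has probability zero). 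The parameters then match directly: $x_{-\sigma}^0(X)=\check{X}_\sigma-\overline{\check{X}}_{\sigma-}=\mathrm{x}_\sigma$ and $u_{-\sigma}^0(X)=\mathrm{x}_\sigma/\Delta\check{X}_\sigma=\mathrm{u}_\sigma$. Since $\check{X}\stackrel{d}{=}X$, this yields (i).

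For (ii), I would combine It\^o's excursion theory for the reflected process $\overline{X}-X$ with the compensation formula for the jumps of $X$. The process $\overline{X}-X$ is strong Markov with $0$ regular for itself, and $L$ is its (normalized) local time at $0$; by It\^o's theorem, the excursions of $\overline{X}-X$ away from $0$ form a Poisson point process of intensity $dl\otimes n(d\omega)$ on the local-time axis. Jumps of $\overline{X}$ are in one-to-one correspondence with those excursion endpoints having a strictly positive left limit (i.e.\ excursions terminated by a jump of $X$ overshooting the previous supremum), which already produces the Poisson structure of~\eqref{eq:poisson}. To identify the intensity, apply the compensation formula with predictable marks $(L_s, h_s)$, where $h_s:=\overline{X}_{s-}-X_{s-}$: for any nonnegative $\phi$ and $G$,
$$\mathbb{E}\!\left[\sum_{\overline{X}_s>\overline{X}_{s-}}\phi(L_s)\,G(\Delta X_s,\mathrm{u}_s)\right]=\mathbb{E}\!\left[\int_0^\infty\phi(L_s)\,ds\int\Pi(dy)\,G\!\left(y,\tfrac{y-h_s}{y}\right)\mathbbm{1}_{\{y>h_s\}}\right].$$
The substitution $r=(y-h)/y$ at fixed $y$ turns $dh$ into $y\,dr$ on $[0,1]$, so everything reduces to showing that the expected occupation measure of $(L_s,h_s)$ on $\{h_s>0\}$ equals $dl\otimes dh$.

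The main obstacle is this last identification. By the Poisson structure of excursions, the occupation measure factorizes as $dl\otimes U(dh)$, where $U(dh)=\int n(d\omega)\int_0^{\zeta(\omega)}\mathbbm{1}_{\{\omega(t)\in dh\}}\,dt$ is the occupation measure of the It\^o excursion measure of $\overline{X}-X$. The equality $U(dh)=dh$ is a Wiener--Hopf statement equivalent to the ladder height subordinator $\overline{X}\circ L^{-1}$ being stable of index $\alpha-1$ with renewal measure equal to Lebesgue measure on $\mathbb{R}_+$; this follows from the spectrally positive case of the Wiener--Hopf factorization ($\psi(\lambda)=\lambda^\alpha$ factorizes as $\lambda\cdot\lambda^{\alpha-1}$) and is sharpened by the normalization $\mathbb{E}[\exp(-\lambda\overline{X}_{L^{-1}(t)})]=\exp(-t\lambda^{\alpha-1})$ adopted in the paper. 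A self-contained alternative is to use the $\alpha$-scaling of $X$ to pin down the intensity up to a multiplicative constant and then fix that constant via the same normalization. Once $U(dh)=dh$ is in hand, the announced intensity $dl\cdot x\,\Pi(dx)\cdot\mathbbm{1}_{[0,1]}(r)\,dr$ follows from the compensation formula computation above.
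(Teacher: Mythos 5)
Your part (i) is correct and is essentially the paper's own argument: the dual process $\hat{X}_s=-X_{(-s)-}$ used in the paper is exactly your $\check{X}=X^2$, and the identification of $x_{-\sigma}^0$, $u_{-\sigma}^0$ with $\mathrm{x}_\sigma$, $\mathrm{u}_\sigma$ is the same computation, just written out in more detail. For part (ii) the paper does not give a self-contained proof at all: it quotes \cite[Corollary 1]{Ber92}, which says that $\sum_j \delta_{(L_{g_j},\Delta X_{d_j},\Delta \overline{X}_{d_j})}$ is a Poisson measure with intensity $dl\,\Pi(dx)\,\mathbbm{1}_{[0,x]}(r)\,dr$, and concludes by the change of variables $r\mapsto r/x$. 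Your route — compensation formula for the jumps of $X$ with the predictable marks $(L_s,h_s)$, plus an identification of the expected occupation measure — is genuinely different and its reduction is correct as far as it goes, but as written it has two real gaps.

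First, your justification of the key identity $U(dh)=dh$ is wrong as stated: the renewal measure of the ascending ladder height subordinator $\overline{X}\circ L^{-1}$, which is stable of index $\alpha-1$, is proportional to $h^{\alpha-2}\,dh$, not Lebesgue measure. The identity you actually need is that the occupation measure under the It\^o excursion measure of $\overline{X}-X$ equals the renewal measure of the \emph{descending} ladder height (the ascending ladder height of the dual $-X$); this is Lebesgue precisely because $X$ has no negative jumps, so that the descending ladder height is a pure drift, and the constant is fixed to $1$ by the Wiener--Hopf factorization $\lambda^\alpha=\lambda\cdot\lambda^{\alpha-1}$ together with the paper's normalization of $L$. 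So the conclusion $U(dh)=dh$ is true, but the ``equivalence'' you invoke points to a false statement and must be replaced by this (or by your scaling-plus-normalization fallback, made precise). Second, the Poisson character of \eqref{eq:poisson} does not come ``already'' from It\^o's theorem applied to $\overline{X}-X$: the marks $\mathrm{x}_s/\mathrm{u}_s=\Delta X_s$ and $\mathrm{u}_s$ involve the overshoot above the running supremum, which is \emph{not} a measurable functional of the excursion path of $\overline{X}-X$ (during an excursion the supremum is frozen, and at the terminal jump the reflected path only records the undershoot $\omega(\zeta-)$, not the jump size). The compensation formula then only identifies the mean measure of the point process; to obtain the full Poisson property you need an additional ingredient — the strong Markov property at the ladder epochs / a marked-excursion argument — which is exactly what Bertoin's Corollary 1 packages and what the paper uses.
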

 \proof The first assertion follows from the fact that the dual process $ \hat{X}$, defined by $ \hat{X}_{s} = -X_{(-s)-}$ for $s \geq 0$, has the same distribution as $X$ and that 
 $$(x_{-s}^0(X),u_{-s}^0(X))=\left(\overline{ \hat{X}}_{s}-\overline{ \hat{X}}_{s-}, \frac{\overline{ \hat{X}}_{s}-\overline{ \hat{X}}_{s-}}{\overline{ \hat{X}}_{s}-{ \hat{X}}_{s-}} \right)$$
 for every $s \geq 0$ such that $-s \preccurlyeq 0$, or equivalently $\overline{ \hat{X}}_{s}> \overline{ \hat{X}}_{s-}$.
 
 For $(ii)$, denote by $(g_{j},d_{j})_{j \in J}$ the excursion intervals of $\overline{X}-X$ above $0$. It is known (see  { \cite[Corollary 1]{Ber92}}) that the point measure
$$ \sum_{j \in J} (L_{g_{j}}, \Delta X_{d_{j}},\Delta \overline{X}_{d_{j}})$$
is a Poisson point measure with intensity $dl \cdot \Pi(dx) \cdot \mathbbm{1}_{[0,x]}(r)dr$. The conclusion follows.
  \endproof 

  We now state a technical but useful consequence of the previous proposition, which will be required in the proof of the lower bound of the Hausdorff dimension of stable looptrees.

 \begin{corollary} \label{cor:technique}Fix $ \eta>0$. Let $(X_{t} : t \in \mathbb{R})$ be a two-sided $\alpha$-stable process. For $ \epsilon>0$, set
 $$A_{ \varepsilon} = \left\{ \exists s \in [-\varepsilon,0] \mbox{ with } s \preccurlyeq 0 : \begin{array}{c} x_{s}^{0}(X) \geq  \varepsilon^{1/\alpha +\eta}\\ \mbox{ and } \\ \Delta X_{s} - x_{s}^{0}(X) \geq \varepsilon^{1/\alpha +\eta} \end{array}\right\}.$$
 Then $ \P(A_{ \varepsilon}^c) \leq C \epsilon^{ \gamma}$ for certain constants $ C, \gamma>0$ (depending on $ \alpha$ and $ \eta$).
 \end{corollary}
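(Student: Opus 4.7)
My plan is to use Proposition~\ref{prop:itomeasure} to translate $A_\varepsilon$ into a property of the ladder Poisson point measure, and then to combine a tail bound on $L_\varepsilon$ with a Poisson first-moment calculation on a deterministic rectangle. First, by part~(i), the descents of $X$ from $0$ have the same distribution as the record-jump data $\{(s', \mathrm{x}_{s'}, \mathrm{u}_{s'}) : \overline{X}_{s'} > \overline{X}_{s'-}\}$. Encoding each record jump as the triple $(l, y, r) = (L_{s'},\, \mathrm{x}_{s'}/\mathrm{u}_{s'},\, \mathrm{u}_{s'})$, so that $\mathrm{x}_{s'} = r y$ and $\Delta X_{s'} = y$, part~(ii) gives a Poisson point measure of intensity $dl \cdot x\Pi(dx) \cdot \mathbbm{1}_{[0,1]}(r)\,dr$. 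The event $A_\varepsilon$ then becomes: there exists an atom $(l,y,r)$ with $l \leq L_\varepsilon$, $ry \geq \varepsilon^{1/\alpha+\eta}$, and $(1-r)y \geq \varepsilon^{1/\alpha+\eta}$.

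Set $\beta = 1 - 1/\alpha$ and $l_0 := \varepsilon^{\beta + \delta}$, where $\delta > 0$ is to be tuned. Since $L^{-1}$ is a $\beta$-stable subordinator we have $L^{-1}(l_0) \stackrel{d}{=} l_0^{1/\beta} L^{-1}(1)$, so
\[
\mathbb{P}(L_\varepsilon < l_0) \;=\; \mathbb{P}\!\big(L^{-1}(1) > \varepsilon^{-\delta/\beta}\big) \;\leq\; C \varepsilon^{\delta},
\]
using the $\beta$-stable tail $\mathbb{P}(L^{-1}(1) > t) \leq C t^{-\beta}$. Next, call an atom \emph{good} if $l \leq l_0$, $y \geq 4\varepsilon^{1/\alpha+\eta}$, and $r \in [\varepsilon^{1/\alpha+\eta}/y,\, 1 - \varepsilon^{1/\alpha+\eta}/y]$; any such atom automatically satisfies the two descent inequalities above. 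Since $x\Pi(dx) = c_\alpha\, x^{-\alpha}\,dx$ and the restriction on $r$ cuts out an interval of length at least $1/2$ when $y \geq 4\varepsilon^{1/\alpha+\eta}$, the expected number of good atoms is at least
\[
\frac{l_0}{2} \int_{4\varepsilon^{1/\alpha+\eta}}^{\infty} c_\alpha x^{-\alpha}\,dx \;\geq\; c\,\varepsilon^{\beta+\delta}\cdot \varepsilon^{-\beta - (\alpha-1)\eta} \;=\; c\,\varepsilon^{\delta - (\alpha-1)\eta}.
\]
Choosing $\delta = (\alpha-1)\eta/2$ makes this mean diverge like $\varepsilon^{-(\alpha-1)\eta/2}$, so by the Poisson void-probability formula the event $E$ that a good atom exists satisfies $\mathbb{P}(E^c) \leq \exp(-c\,\varepsilon^{-(\alpha-1)\eta/2})$.

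Finally, since $L$ is non-decreasing, on $\{L_\varepsilon \geq l_0\} \cap E$ the good atom corresponds to some time $s' \in [0, \varepsilon]$, so $A_\varepsilon$ holds. Combining,
\[
\mathbb{P}(A_\varepsilon^c) \;\leq\; \mathbb{P}(L_\varepsilon < l_0) + \mathbb{P}(E^c) \;\leq\; C\varepsilon^{(\alpha-1)\eta/2} + \exp\!\big(-c\,\varepsilon^{-(\alpha-1)\eta/2}\big),
\]
yielding the claim with $\gamma = (\alpha-1)\eta/2$. The main subtlety is that $L_\varepsilon$ is not obviously independent of the ladder Poisson measure, but the decomposition above sidesteps this by controlling $L_\varepsilon$ from below beforehand, so that the remaining question reduces to a plain first-moment estimate on a deterministic region of the intrinsic Poisson measure.
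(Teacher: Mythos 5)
Your proof is correct and takes essentially the same route as the paper: reduce to the one-sided process via \cref{prop:itomeasure}, split on whether $L_{\varepsilon}$ exceeds the threshold $l_0=\varepsilon^{1-1/\alpha+(\alpha-1)\eta/2}$, bound $\P(L_{\varepsilon}<l_0)$ by scaling and the tail of the stable subordinator $L^{-1}$, and use the Poisson void probability over the deterministic local-time window $[0,l_0]$, yielding the same exponent $\gamma=(\alpha-1)\eta/2$. The only cosmetic point is the equality case $L_{s'}=L_{\varepsilon}=l_0$ in your final step, which is a null event (or is avoided by working on $\{L_{\varepsilon}>l_0\}$, as the paper does).
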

 
\begin{proof} Set  {$B_{ \varepsilon} = \{ \exists s \in [0,\varepsilon] :  X_{s} \geq  \varepsilon^{1/\alpha +\eta} \mbox{ and } \Delta X_{s} - x_{s}\geq \varepsilon^{1/\alpha +\eta}  \}$}. By \cref{prop:itomeasure} $(i)$, it is sufficient to establish the existence of two constants $C, \gamma>0$ such that $ \P(B_{\varepsilon}^c) \leq C \epsilon^{ \gamma}$. To simplify notation, set $\overline{ \alpha}=1-1/ \alpha$ and $ c_{ \epsilon}= \epsilon^ { \eta ( \alpha-1)/2}$. Then write:
\begin{eqnarray}
\P(B_{\varepsilon}^c) &\leq& \P(B_{\varepsilon}^c, L_{ \varepsilon} >  c_{ \epsilon} \epsilon^{\overline{ \alpha}}) + P(L_{ \varepsilon} < c_{ \epsilon} \epsilon^{\overline{ \alpha}})  \notag\\
& \leq & {\P( \forall s \textrm{ s.t. } L_{s} \leq  c_{ \epsilon} \epsilon^{\overline{ \alpha}} :  x_{s} <  \varepsilon^{1/\alpha +\eta} \mbox{ or } \Delta X_{s} - x_{s}< \varepsilon^{1/\alpha +\eta})}+ \P(L_{ \varepsilon} < c_{ \epsilon} \epsilon^{\overline{ \alpha}}). \label {eq:r}
\end{eqnarray}
{Using the fact that  \eqref{eq:poisson} is a Poisson point measure with intensity $dl \cdot x\Pi(dx) \cdot \mathbbm{1}_{[0,1]}(r)dr$, it follows that the first term of  \eqref{eq:r} is equal to
\begin{eqnarray*}
\exp \left( - c_{ \epsilon} \epsilon^{\overline{ \alpha}}  \int_{0}^1 dr  \int_{ - \infty} ^ { \infty} x \Pi(dx)  \, \mathbbm {1}_{  \{ rx \geq  \varepsilon^{1/\alpha +\eta} \mbox { and }  x (1-r) \geq \varepsilon^{1/\alpha +\eta}\}}\right) &=& \exp \left( - c_{ \epsilon} \epsilon^{\overline{ \alpha}}  \cdot  c \epsilon^{- \eta( \alpha-1)}\right)
\end{eqnarray*}
for a certain constant $c>0$. In addition, 
$$\P(L_{ \varepsilon} < c_{ \epsilon} \epsilon^{\overline{ \alpha}}) \leq  \P \left(L^{-1}_{1}> \frac{\varepsilon}{( c_{ \epsilon} \epsilon^{\overline{ \alpha}})^ {1/ \overline{ \alpha}}} \right) \leq \P(L^{-1}_{1}>  {\epsilon}^{-  \eta \alpha/2}).$$
The conclusion follows since $\P(L^{-1}_{1}>u) = \mathcal{O}(u^{ -\bar{\alpha}})$ as $u \rightarrow \infty$}.\end{proof}

 We conclude this section by a  lemma which will be useful in the proof of \cref{thm:continuity}. See also \cite[Proof of Proposition 7]{LGM09} for a similar statement.

\begin{lemma}\label{lem:jumps}Almost surely, for every $ t \geq 0$ we have 
 \begin{eqnarray} \label{eq:jump} X_{t} - \inf_{[0,t]} X  &=& \sum_{\begin{subarray}{c}s \preccurlyeq t \\ s \geq 0 \end{subarray}} x_{s}^{t}(X).  \end{eqnarray}
\end{lemma}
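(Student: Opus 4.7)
I would prove the identity by rewriting the right-hand side as an integral over levels $u$ via Fubini, identifying the integrand, and then extending from the pointwise statement in $t$ to the almost-sure simultaneous one via right-continuity in $t$.

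For a fixed $t \geq 0$, writing $x_s^t = \int_{X_{s-}}^{\inf_{[s,t]} X} du$ for each ancestor $s \preccurlyeq t$ and applying Tonelli (all terms being non-negative) yields
\begin{equation*}
\sum_{s \preccurlyeq t} x_s^t \;=\; \int_{\R} N_t(u)\, du, \qquad N_t(u) := \#\bigl\{ s \preccurlyeq t : X_{s-} < u \leq \inf\nolimits_{[s,t]} X \bigr\}.
\end{equation*}
The inequalities $X_{s-} \geq \inf_{[0,t]} X$ (valid for every $s \in [0,t]$ by definition of the infimum) and $\inf_{[s,t]} X \leq X_t$ localize the support of $N_t$ to $(\inf_{[0,t]} X, X_t]$. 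For $u$ inside this interval, any $s$ contributing to $N_t(u)$ must be a jump time of $X$ with $X_{s-} < u \leq X_s$ after which $X$ stays $\geq u$ on $[s,t]$; these two constraints identify $s$ as the last time $X$ is strictly below $u$, namely $s = \tau(u) := \sup\{ r \in [0,t] : X_r < u\}$. Hence $N_t(u) \in \{0,1\}$, with $N_t(u) = 1$ iff $X_{\tau(u)-} < u$.

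I would then control the exceptional set
\begin{equation*}
E := \bigl\{ u \in (\inf\nolimits_{[0,t]} X, X_t) : X_{\tau(u)-} = u\bigr\},
\end{equation*}
which decomposes into an at most countable part (the heights at which a jump of $X$ begins) and a ``continuous-crossing'' part $\{u : X_{\tau(u)-} = X_{\tau(u)} = u\}$. For a spectrally positive $\alpha$-stable L\'evy process with $\alpha \in (1,2)$, the latter is Lebesgue-negligible almost surely: informally, since $X$ has only positive jumps and its continuous part comes from the martingale compensator of the positive jumps (hence has non-positive continuous variation), every upward crossing of a generic level $u$ is produced by a jump. The argument fails for Brownian motion ($\alpha = 2$), where every crossing is continuous and the identity itself ceases to hold. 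Combining,
\begin{equation*}
\sum_{s \preccurlyeq t} x_s^t \;=\; \mathrm{Leb}\bigl((\inf\nolimits_{[0,t]} X, X_t) \setminus E\bigr) \;=\; X_t - \inf\nolimits_{[0,t]} X.
\end{equation*}

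Finally, to obtain one event of probability one on which \eqref{eq:jump} holds for every $t \geq 0$, I would apply the above at each $t \in \mathbb{Q}_+$ and extend by right-continuity of both sides in $t$ (the left-hand side is patently right-continuous; the right-hand side can be handled via a monotone/dominated-convergence argument as the ancestor set varies right-continuously as $t \downarrow t_0$). The main obstacle is the control of the exceptional set $E$, specifically its continuous-crossing part, which is precisely where the hypothesis $\alpha < 2$ is used in an essential way.
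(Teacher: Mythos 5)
Your reduction is correct and genuinely different in form from the paper's argument: the Tonelli/level-set rewriting, the localization of the support of $N_t(u)$ to $(\inf_{[0,t]}X, X_t]$, the identification $N_t(u)\in\{0,1\}$ with $N_t(u)=1$ iff $X_{\tau(u)-}<u$, and the extension from rational $t$ to all $t$ by right-continuity of both sides (the paper does the same last step, remarking that both sides are c\`adl\`ag) are all sound. The paper instead reverses time at $t$ and observes that the statement becomes ``the running supremum equals the sum of its jumps'', which it deduces from the fact that the ladder height process is a driftless subordinator (\cite[Ch.~VIII, Lemma~1]{Ber96}).

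The genuine gap is in your treatment of the continuous-crossing part of $E$. The claim that $\{u: X_{\tau(u)-}=X_{\tau(u)}=u\}$ is Lebesgue-null is not a soft consequence of spectral positivity: it is exactly equivalent, via the same time reversal the paper uses, to the statement that the running supremum of the dual process has no continuous part, i.e.\ that the ascending ladder height subordinator of a spectrally positive $\alpha$-stable process has zero drift (equivalently, the process does not creep upward). That is precisely the fluctuation-theory input of the paper's proof, and you have assumed it rather than proved it. Moreover, the heuristic you offer --- that the ``continuous part'' of $X$ comes from the compensator and has non-positive variation, so upward crossings must be produced by jumps --- rests on a pathwise picture that is false for $\alpha\in(1,2)$: the paths have infinite variation, the small jumps are compensated, and the path is \emph{not} decreasing between jumps (the ``jumps plus negative drift'' decomposition only exists in the finite-variation regime $\alpha<1$). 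So this step cannot be made rigorous as written. The fix is to invoke the fact that the ladder height process is a stable subordinator of index $\alpha-1\in(0,1)$, hence driftless, so the Lebesgue measure of the levels attained by continuous increase of the (time-reversed) supremum --- which is the drift times the local time --- vanishes; with that substitution your argument is complete and is essentially the dual formulation of the paper's proof.
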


\begin{proof} The left-hand side of the equality appearing in the statement of the lemma is clearly a cˆdlˆg function. It also simple, but tedious, to check that the right-hand side is a cˆdlˆg function as well. It thus suffices to prove that \eqref{eq:jump} holds almost surely for every fixed $t \geq 0$. 

Set $\hat{X}_{s}=X_{(t-s)-}-X_{t-}$ for $ 0 \leq s \leq t$, and to simplify notation set $ S_{u}= \sup_{[0,u]} \hat{X}$. In particular, $(X_{s},0 \leq s \leq t)$ and $ (\hat{X}_{s}, 0 \leq s \leq t)$ have the same distribution. Hence
\begin{equation}
\label{eq:lemj} \big(S_{t}, \sum_{  0 \leq  s \leq  t}  \Delta  S_{s}  \big) \quad \mathop{=}^{(d)}  \quad \Big(X_{t} - \inf_{[0,t]} X, \sum_{\begin{subarray}{c}s \preccurlyeq t \\ s \geq 0 \end{subarray}} x_{s}^{t}(X) \Big).
\end{equation}
Then notice that ladder height process $(S_{ L^{-1}_{t}}, t \geq 0)$ is a subordinator without drift  
\cite[Chapter VIII, Lemma 1]{Ber96}, hence a pure jump-process. 
This implies that $S_{t}$ is the sum of its jumps, i.e. a.s $S_{t}= \sum_{0 \leq s \leq t} \Delta S _{s}$. This completes the proof of the lemma.
\end{proof}

The following result is the {analog statement} {for} the normalized excursion.
 
\begin{corollary} \label{coro:exc} {Almost surely,} for every $t \in [0,1]$ we have 
$$ X_{t}^{ \mathrm{exc}} = \sum_{0 \preccurlyeq s \preccurlyeq t} x_{s}^t ( \X).$$
\end{corollary}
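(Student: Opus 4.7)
The plan is to transfer the identity of Lemma \ref{lem:jumps} (valid for the unconditioned $\alpha$-stable L\'evy process $X$) to the normalized excursion $X^{\mathrm{exc}}$ by means of It\^o's excursion theory for $X - \underline{X}$, together with the disintegration formula and the scaling invariance of $\underline{n}$. I prefer this route to a direct Vervaat/absolute-continuity argument because $X^{\mathrm{exc}}$ arises from the bridge by a cyclic shift and the ``wrap-around'' portion ($t > 1 - u_\star$) is awkward to treat cleanly when the identity is written in terms of $X^{\mathrm{br}}$.

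The key technical step is to show that a generic excursion satisfies the identity. Let $(g_j, d_j)$ be an excursion interval of $X - \underline{X}$ and set $\omega^j(r) = X_{g_j+r} - X_{g_j}$ for $r \in [0, d_j - g_j)$. Applying Lemma \ref{lem:jumps} to $X$ at the time $t = g_j + r$, one uses the fact that $\underline{X}_{g_j+r} = X_{g_j}$ (the running infimum does not decrease during the excursion) to rewrite the left-hand side as $\omega^j(r)$. On the right-hand side, any $s \in [0, g_j]$ with $s \preccurlyeq_X (g_j+r)$ must satisfy $X_{s-} = X_{g_j} = \inf_{[s, g_j+r]} X$, forcing $x_s^{g_j+r}(X) = 0$; for $s = g_j + r'$ with $r' \in (0, r]$, the condition $s \preccurlyeq_X (g_j+r)$ translates exactly to $r' \preccurlyeq_{\omega^j} r$, with $x_s^{g_j+r}(X) = x_{r'}^r(\omega^j)$. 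Hence $\omega^j(r) = \sum_{0 \preccurlyeq r' \preccurlyeq r} x_{r'}^r(\omega^j)$. Since Lemma \ref{lem:jumps} holds \emph{simultaneously} for every $t \geq 0$ on an event of full measure, every excursion $\omega^j$ satisfies this identity throughout its lifetime. The Poisson point measure $\mathcal{N}$ has intensity $\underline{n}$ on $\mathcal{E}$, so Campbell's formula yields $\underline{n}(Q^c) = 0$, where $Q \subset \mathcal{E}$ is the measurable set of excursions satisfying the identity on their entire lifetime.

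Using the disintegration $\underline{n}(\cdot) = \int_0^\infty \underline{n}_{(a)}(\cdot)\, c\, a^{-1/\alpha - 1} da$, the scaling invariance of $Q$ (both sides of the identity are rescaled by $\lambda^{1/\alpha}$ under $S^{(\lambda)}$), and the property $S^{(\lambda)}(\underline{n}_{(a)}) = \underline{n}_{(\lambda a)}$, we deduce $\underline{n}_{(1)}(Q^c) = 0$. Since $\underline{n}_{(1)}$ is the law of $X^{\mathrm{exc}}$, this gives the identity almost surely for every $t \in [0, 1)$. The endpoint $t = 1$ is handled separately: $X^{\mathrm{exc}}_1 = 0$, and if $r \preccurlyeq_{X^{\mathrm{exc}}} 1$ then $X^{\mathrm{exc}}_{r-} \leq \inf_{[r,1]} X^{\mathrm{exc}} = 0$, which combined with $X^{\mathrm{exc}} \geq 0$ and the absence of negative jumps (so that $X^{\mathrm{exc}}_{1-} = 0$ a.s.) forces $X^{\mathrm{exc}}_{r-} = 0$ and hence $x_r^1(X^{\mathrm{exc}}) = 0$. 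The main obstacle is the careful bookkeeping in Step 1 (showing the ``outside'' contributions $s \in [0, g_j]$ indeed vanish and matching the $\preccurlyeq$ relations on the two sides); the rest is standard Poisson and scaling machinery.
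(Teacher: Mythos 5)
Your argument is correct, and its technical core (restricting the identity of \cref{lem:jumps} to one excursion interval of $X-\underline{X}$, checking that descent terms from times $s\le g_j$ contribute zero because $X_{s-}\ge \underline{X}_s\ge \underline{X}_{g_j}=X_{g_j}\ge \inf_{[s,g_j+r]}X\ge X_{s-}$, and matching the descent relations inside the interval) is precisely the ``detail left to the reader'' in the paper. Where you genuinely differ is the transfer mechanism: the paper's one-line proof invokes the pathwise construction of \cref{sec:normexc}, where $X^{\mathrm{exc}}$ \emph{is} the excursion of $X$ above its infimum straddling time $1$, rescaled as in \eqref{eq:Xexc}; since your Step 1 applies verbatim to the single interval $(\underline{g}_1,\underline{d}_1)$ and the identity is covariant under the space--time scaling of \eqref{eq:Xexc} (both sides get multiplied by $(\underline{d}_1-\underline{g}_1)^{-1/\alpha}$), the corollary follows pathwise with no further machinery. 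You instead push the identity onto the It\^o measure via the Poisson point measure, disintegrate over the lifetime, and use the $S^{(\lambda)}$-invariance of your set $Q$ together with $S^{(\lambda)}(\underline{n}_{(a)})=\underline{n}_{(\lambda a)}$ to reach $\underline{n}_{(1)}$, which the paper identifies with the law of $X^{\mathrm{exc}}$. This is valid, but note that your stated reason for avoiding the ``direct'' route is misplaced: no Vervaat or bridge manipulation is needed, because $X^{\mathrm{exc}}$ is defined in \cref{sec:normexc} directly from $X$, not from $X^{\mathrm{br}}$. What your detour buys is a statement proved at the level of the law $\underline{n}_{(1)}$ (hence independent of the particular realization) and, in passing, for $\underline{n}_{(a)}$-a.e.\ excursion of every lifetime; what it costs is extra bookkeeping you should still record: measurability of $Q$ and the reduction of ``for all $t$'' to rational $t$ (both sides are c\`adl\`ag, and only jump times contribute since $0\le x_{s}^{t}(\omega)\le \Delta\omega_{s}$), and the a.s.\ fact that no excursion of $X-\underline{X}$ begins with a jump, i.e.\ $X_{g_j}=\underline{X}_{g_j}$ for every $j$, which is implicitly used when you write $\underline{X}_{g_j+r}=X_{g_j}$. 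Neither point is a real obstacle, and your treatment of the endpoint $t=1$ is fine (one only needs $X^{\mathrm{exc}}\ge 0$ and $X^{\mathrm{exc}}_1=0$ to force $x_r^1=0$ for all $r\preccurlyeq 1$).
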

\begin {proof} This follows from the previous lemma and the construction of $X^{ \mathrm{exc}}$ as the normalized excursion above the infimum of $X$ straddling time $1$ {in Section \ref{sec:normexc}}. We leave details to the reader.
\end {proof}

In particular \cref{coro:exc} implies that {almost surely}, for every $0 \leq t \leq 1$,
\begin{equation}
\label{eq:u}X_{t}^{ \mathrm{exc}} =\displaystyle\sum_{0 \preccurlyeq s \preccurlyeq t} \Delta X^{\exc}_{s} \cdot u_{s}^t( X^{\exc}). \end{equation}
By \eqref{eq:def1}, a similar equality, which will be useful later,  holds {almost surely for every $ 0 \leq t \leq 1$}:
\begin{equation}
\label{eq:u2}d(0,t)=\displaystyle \sum_{0 \preccurlyeq s \preccurlyeq t} \Delta X^{\exc}_{s} \cdot  \min\big( u_{s}^t( X^{\exc}),1-u_{s}^t( X^{\exc})\big).\end{equation}

\subsubsection{Limiting behavior of the normalized excursion as $ \alpha \downarrow 1$ and $ \alpha \uparrow 2$}

In this section we study the behavior of $X^{ \mathrm{exc}}$ as $\alpha \to 1$ or $ \alpha \to 2$. In order to stress the dependence in $ \alpha$, we add an additional superscript $^{(\alpha)}$, e.g. $X^{(\alpha)},X^{ \mathrm{br},(\alpha)},X^{\exc,(\alpha)}$  will respectively denote the $ \alpha$-stable spectrally positive process, its bridge and normalized excursion, and $ \Pi^{( \alpha)}, \underline{n}^{(\alpha)}$ will respectively denote the LŽvy measure and the excursion measure above the infimum of $X^{(\alpha)}$.

\paragraph{Limiting case $ \alpha \uparrow 2$.}

We prove that $X^{\exc,(\alpha)}$ converges, as $ \alpha \uparrow 2$, towards a multiple of the normalized Brownian excursion, denoted by $\mathbf{e}$ (see \cref{fig:levys} for an illustration). This is standard and should not be surprising, since the $ \alpha=2$ stable LŽvy process is just $\sqrt{2}$ times Brownian motion.

\begin{proposition} \label{prop:cve}The following convergence holds in distribution for the topology of uniform convergence on every compact subset of $ \mathbb{R}_{+}$
\begin{eqnarray} \label{eq:case2}  X^{\exc,(\alpha)} \quad \xrightarrow[\alpha\uparrow 2]{(d)} \quad \sqrt{2} \cdot \mathbf{e}.  
\end{eqnarray}
\end {proposition}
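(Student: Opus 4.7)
The plan is to pass from convergence of the Lévy process itself to convergence of its bridge (via the absolute continuity relation of Section~2.3.2), and then to convergence of the normalized excursion (via Vervaat's transformation). Since the limit $\sqrt{2}\cdot\mathbf{e}$ is continuous, Skorokhod convergence on $[0,1]$ is equivalent to uniform convergence.

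\textbf{Step 1: Convergence of the free process.} I would first establish that $X^{(\alpha)} \to \sqrt{2}\cdot B$ in $D(\mathbb{R}_+,\mathbb{R})$ as $\alpha\uparrow 2$, where $B$ is standard Brownian motion. Since the Laplace exponents satisfy $\lambda^\alpha\to\lambda^2$ for every $\lambda\geq 0$, and since $\exp(t\lambda^2)$ is the Laplace transform of $\sqrt{2}\cdot B_t$, the convergence at the level of one-dimensional marginals is immediate. Convergence of Lévy processes from convergence of their exponents (in the Skorokhod sense) is a classical consequence of Jacod--Shiryaev's functional limit theorems, using that $\lambda^\alpha\to\lambda^2$ locally uniformly in~$\lambda$.

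\textbf{Step 2: Convergence of the bridge on $[0,a]\cup[1-a,1]$ for $a<1$.} Fix $a\in(0,1)$ and a bounded continuous $F:D([0,a],\mathbb{R})\to\mathbb{R}$. The absolute continuity relation recalled in Section~2.3.2 gives
\[
\E\bigl[F(X^{\br,(\alpha)}_t;\,0\le t\le a)\bigr] = \E\!\left[F(X^{(\alpha)}_t;\,0\le t\le a)\,\frac{p^{(\alpha)}_{1-a}(-X^{(\alpha)}_a)}{p^{(\alpha)}_1(0)}\right].
\]
The Fourier inversion formula for stable densities, together with $\lambda^\alpha\to\lambda^2$ and dominated convergence, shows that $p^{(\alpha)}_s(x)$ converges locally uniformly to the Gaussian density $q_s(x)=(4\pi s)^{-1/2}e^{-x^2/(4s)}$. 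Combined with Step~1 and a uniform integrability argument (the densities $p^{(\alpha)}_{1-a}$ are uniformly bounded in $\alpha$ close to~$2$), this yields $X^{\br,(\alpha)}|_{[0,a]} \xrightarrow{(d)} \sqrt{2}\cdot B^{\br}|_{[0,a]}$. The same absolute continuity statement applied to the time-reversed process gives the analogous convergence on $[1-a,1]$.

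\textbf{Step 3: Extending to $[0,1]$ via tightness.} This is where the work lies. I would prove that the family $\{X^{\br,(\alpha)}\}_{\alpha\in(1,2)}$ (for $\alpha$ close to $2$) is tight in $D([0,1],\mathbb{R})$, with any limit point supported on $C([0,1],\mathbb{R})$. Tightness on subintervals $[0,a]$ and $[1-a,1]$ follows from tightness of $X^{(\alpha)}$ via the bounded Radon--Nikodym density above. To control oscillations across the endpoints one can use that as $\alpha\uparrow 2$ the stable distribution acquires arbitrarily high moments (after removing the largest jump, or on any event where no big jump occurs, which has probability tending to $1$), giving a uniform modulus-of-continuity bound of Kolmogorov--Chentsov type. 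Combined with Step~2, which pins down the finite-dimensional distributions on $(0,1)$, this gives $X^{\br,(\alpha)}\xrightarrow{(d)} \sqrt{2}\cdot B^{\br}$ uniformly on $[0,1]$.

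\textbf{Step 4: Vervaat.} Finally, recall that $X^{\exc,(\alpha)}$ is obtained from $X^{\br,(\alpha)}$ by the cyclic shift at the (a.s.\ unique) time $u_\star(X^{\br,(\alpha)})$ of minimum. The Brownian bridge also attains its minimum at a unique time almost surely, so the Vervaat map $\mathcal{V}:D([0,1],\mathbb{R})\to D([0,1],\mathbb{R})$ is continuous at $\sqrt{2}\cdot B^{\br}$ almost surely. The continuous mapping theorem then yields
\[
X^{\exc,(\alpha)}=\mathcal{V}(X^{\br,(\alpha)}) \ \xrightarrow[\alpha\uparrow 2]{(d)} \ \mathcal{V}(\sqrt{2}\cdot B^{\br}) = \sqrt{2}\cdot\mathbf{e},
\]
which is the desired convergence.

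The main obstacle is Step~3: the absolute continuity argument degrades as $a\to 1$ (since $p^{(\alpha)}_{1-a}(x)$ blows up), so uniform tightness across the full interval requires a separate moment or truncation argument that is robust as $\alpha\uparrow 2$.
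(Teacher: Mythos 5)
Your proposal is correct in outline, but it takes a genuinely different route from the paper. The paper shares your Step 1 (it verifies $X^{(\alpha)}\to\sqrt{2}\cdot B$ via Pollard-type criteria rather than Jacod--Shiryaev, which is immaterial), but then it never touches the bridge: it uses directly the definition of $X^{\exc,(\alpha)}$ from \cref{sec:normexc} as the rescaled excursion of $X^{(\alpha)}$ above its running infimum straddling time $1$. After Skorokhod representation, it shows $\underline{g}_1^{(\alpha)}\to\underline{g}_1^{(2)}$ a.s.\ (local minima of $B$ are distinct) and $\underline{d}_1^{(\alpha)}\to\underline{d}_1^{(2)}$ in distribution ($\underline{d}_1^{(2)}$ is a.s.\ not a time of local minimum, by the Markov property), and concludes from the explicit formula \eqref{eq:Xexc}. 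This completely bypasses the part you rightly identify as the heavy lifting in your approach, namely tightness of the bridge up to time $1$. Your route (free process $\to$ bridge by absolute continuity $\to$ excursion by Vervaat) is the transfer scheme the paper uses elsewhere (\cref{sec:absolutecontinuity}), and it does go through: the Radon--Nikodym densities are uniformly controlled for $\alpha\in(3/2,2]$ by the bound of \cref{lem:technique} together with scaling and the convergence $p_1^{(\alpha)}(0)\to(4\pi)^{-1/2}>0$, and continuity of the cyclic-shift map at a continuous path with a unique minimum gives Step 4. Note, however, that the obstacle you flag in Step 3 largely dissolves: you never need $a\to1$. Tightness on $[0,1]$ follows from tightness on the two \emph{fixed} overlapping windows $[0,2/3]$ and $[1/3,1]$ (the latter by the time-reversed absolute continuity relation), and finite-dimensional convergence at any fixed times $t_1<\dots<t_k<1$ only requires some $a\in(t_k,1)$; so no Kolmogorov--Chentsov or largest-jump truncation argument is needed. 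In short: your proof works and is self-contained modulo these standard patchings, but the paper's argument is shorter because the normalized excursion is already defined pathwise from $X^{(\alpha)}$, making the bridge an unnecessary intermediary here.
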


\begin {proof}
We first establish an unconditioned version of this convergence. Specifically, if $B$ is a standard Brownian motion, we show that
\begin{equation}
\label{eq:cvB} X^{(\alpha)}  \quad\mathop{\longrightarrow}^{(d)}_{ \alpha \uparrow 2} \quad  \sqrt{2} \cdot B, 
\end{equation} where the convergence holds in distribution for the uniform topology on $ \D([0,1], \R)$. Since $B$ is almost surely continuous, by \cite[Theorems V.19, V.23]{Pol84} it is sufficient to check that the following three conditions hold as $ \alpha \uparrow 2$: 
\begin{enumerate}
\item[(a)] {The convergence} $  \displaystyle X^{(\alpha)}_{0} \mathop{\longrightarrow}  \sqrt {2} \cdot B_{0}$ {holds} in distribution,
\item[(b)] For every $0<s<t$, the convergence $ \displaystyle X^{(\alpha)}_{t}-X^{(\alpha)}_{s}  \mathop{\longrightarrow}  \sqrt {2} \cdot (B_{t}-B_{s})$ {holds} in distribution,
\item[(c)] For every $ \delta>0$, there exist $ \eta, \epsilon>0$  such that for $0 \leq s \leq t \leq 1$:
 $$ | t-s | <  \eta \qquad \Longrightarrow \qquad \Pr{|X^{(\alpha)}_{t}-X^{(\alpha)}_{s}  | \leq \delta/2} \geq \epsilon.$$
  \end{enumerate}
It is clear that Condition (a) holds. The scaling property of $ X^{( \alpha)}$ entails that $X^{(\alpha)}_{t}-X^{(\alpha)}_{s}$ has the same law as $(t-s)^{1/ \alpha} \cdot X^{( \alpha)}_{1}$. On the other hand, for every $u \in \mathbb{R}$, we have 
$$ \Es{\exp( \mathrm{i}u X^{(\alpha)}_{1})} \quad  \xrightarrow[ \alpha \uparrow 2]{} \quad \exp(-u^2) = \Es{\exp(\mathrm{i} u \sqrt{2} B_{1})}.$$ Condition (b) thus holds. The same argument gives Condition (c). 
This establishes \eqref{eq:cvB}.

 The convergence \eqref{eq:case2} is then a consequence of the construction of $X^{\exc,(\alpha)}$ from the excursion of $X^{(\alpha)}$ above its infimum straddling time $1$ (see \cref{sec:normexc}). Indeed, by Skorokhod's representation theorem, we may assume that the convergence \eqref{eq:cvB} holds almost surely. Then set $$ \underline{g}_{1}^{(\alpha)} =  \sup\{s \leq 1 : X^{(\alpha)}_{s} = \inf_{[0,s]} X^{(\alpha)}\} \quad  \mbox{and} \quad  \underline{d}_{1}^{(\alpha)} =  \inf\{s > 1 : X^{(\alpha)}_{s} = \inf_{[0,s]} X^{(\alpha)}\}.$$ Similarly, define $ \underline{g}_{1}^{(2)},\underline{d}_{1}^{(2)}$  when $X^{(\alpha)}$ is replaced by  $ \sqrt{2} \cdot B$.  Since local minima of Brownian motion are almost surely distinct, we get that $ \underline{g}_{1}^{(\alpha)} \to \underline{g}_{1}^{(2)}$ a.s.\,as $ \alpha \uparrow 2$. On the other side, since for every $ \alpha \in (1,2]$, a.s.\,$ \underline{d}_{1}^{(2)}$ is not a local minimum of $B$ (this follows from the Markov property applied at the stopping time $ \underline{d}_{1}^{(2)})$  we get that $ \underline{d}_{1}^{(\alpha)} \to \underline{d}_{1}^{(2)}$ in distribution as $ \alpha \uparrow 2$. The desired convergence \eqref{eq:case2} then follows from \eqref{eq:Xexc}.
\end {proof}

\paragraph{Limiting case $ \alpha \downarrow 1$.}
The limiting behavior of the normalized excursion $X^ {\mathrm{exc},(\alpha)}$ as $\alpha \downarrow 1$ is very different from the case $ \alpha \uparrow 2$. Informally, we will see that in this case,  $X^ {\mathrm{exc},(\alpha)}$  converges towards  the deterministic affine function on $[0,1]$ which is equal to $1$ at time $0$ and $0$ at time $1$. Some care is needed in the formulation of this statement, since the function $x \mapsto \mathbbm{1}_{0<x\leq1}(1-x)$ is not cˆdlˆg. To cope up with this technical issue, we reverse time:

{\begin {proposition} \label{prop:cvsautexc}The following convergence holds in distribution in $ \D([ 0,1] , \R)$:
 $$  \left( X^ {\mathrm{exc},(\alpha)}_ { (1-t)-}, 0 \leq t \leq 1 \right) \quad \mathop { \longrightarrow}^ {(d)}_ {\alpha \downarrow 1} \quad \left( t \mathbbm {1}_{  \{t \neq 1\}}, 0 \leq t \leq 1 \right).$$
\end {proposition}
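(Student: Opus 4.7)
I aim to reduce the statement to a ``single big jump'' phenomenon for the unconditioned L\'evy process $X^{(\alpha)}$, propagated to the normalized excursion through the absolute continuity formula of \cref{sec:absolutecontinuity} and Chaumont's Vervaat-type re-rooting. Concretely: (i) first prove $X^{(\alpha)}_\cdot \to (t \mapsto -t)$ uniformly on $[0,1]$ in probability; (ii) then transfer to the bridge $X^{\br,(\alpha)}$, showing Skorokhod convergence to the random path $Y_t = -t \, \mathbbm{1}_{\{t < U\}} + (1-t)\, \mathbbm{1}_{\{t \geq U\}}$ with $U$ uniform on $[0,1]$; (iii) finally apply Vervaat and time-reversal.

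Step (i) follows from the L\'evy--It\^o decomposition $X^{(\alpha)}_t = -c_\alpha t + M^{(\alpha)}_t + J^{(\alpha)}_t$, where $c_\alpha = \int_1^\infty r\, \Pi^{(\alpha)}(dr) = \alpha/\Gamma(2-\alpha) \to 1$, the compensated small-jump martingale satisfies $\E[(M^{(\alpha)}_t)^2] \leq t \cdot \alpha(\alpha-1)/((2-\alpha)\Gamma(2-\alpha)) \to 0$, and the big-jump part $J^{(\alpha)}$ is compound Poisson of rate $\Pi^{(\alpha)}((1,\infty)) = (\alpha-1)/\Gamma(2-\alpha) \to 0$. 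Doob's maximal inequality for $M^{(\alpha)}$ together with a direct estimate for $J^{(\alpha)}$ upgrade these to uniform convergence on $[0,1]$ in probability.

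Step (ii) is the main obstacle. Since $X^{(\alpha)}_1 \to -1$ in law, conditioning $X^{(\alpha)}_1 = 0$ realizes a positive deviation of order $1$ from the typical value. For $X^{(\alpha)}_1$, whose right tail decays like $(\alpha-1) r^{-\alpha}/\Gamma(2-\alpha)$, this is the classical regime of the heavy-tailed ``single big jump'' principle: conditionally, exactly one jump of size $\approx 1$ appears at an asymptotically uniform time. I would make this rigorous by applying the absolute continuity formula on $[0,a]$ for $a \in (0,1)$, analysing the density ratio $p^{(\alpha)}_{1-a}(-X^{(\alpha)}_a)/p^{(\alpha)}_1(0)$ via sharp tail asymptotics of the stable density (both numerator and denominator vanish at rate $\alpha-1$, with a nontrivial limit ratio), and distinguishing whether the big jump lies in $[0,a]$ or in $(a,1]$. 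Uniformity of the big-jump location follows from the time-reversal invariance noted in \cref{sec:absolutecontinuity}.

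Given step (ii), step (iii) is a deterministic Skorokhod computation. The infimum of $Y$ equals $-U$, attained as a left limit at $t = U$; cyclic-shifting at $U$ yields $X^{\exc}_s = 1 - s$ on $(0,1)$ with $X^{\exc}_0 = X^{\exc}_1 = 0$. Since $Y$ a.s.\ has a unique infimum, the Vervaat transform is continuous at $Y$, so the Skorokhod convergence of the bridge transfers to the excursion; time-reversing then yields the claimed limit $t \, \mathbbm{1}_{\{t \neq 1\}}$.
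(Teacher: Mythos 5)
Your Step (i) is fine, and in fact more explicit than the paper's treatment (the paper gets $X^{(\alpha)}\to(-t)$ by checking finite-dimensional and tightness conditions, while your L\'evy--It\^o decomposition with the computations $c_\alpha=\alpha/\Gamma(2-\alpha)\to1$, $\int_0^1 r^2\,\Pi^{(\alpha)}(dr)\to0$, $\Pi^{(\alpha)}((1,\infty))\to0$ gives the same conclusion directly). Step (iii) is also acceptable at the level of detail of the paper. The problem is Step (ii), which is not a proof but a restatement of the difficulty. The whole content of \cref{prop:cvsautexc} is to rule out that the order-one positive displacement forced by the bridge conditioning is produced by an accumulation of many small or moderate jumps rather than by a single jump of size close to $1$; the paper says this explicitly before its proof, and resolves it with the exchangeability/shuffling argument of Lemma \ref{lem:dur} (cut the bridge into $n$ pieces, shuffle, and use Billingsley's exchangeable-increments limit theorem to show the empirical variance of the small increments must vanish), precisely so as to avoid any quantitative control of the stable densities as $\alpha\downarrow1$. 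In your route, that exclusion has to come from bounding $\E\bigl[\mathbbm{1}_{A}\,p^{(\alpha)}_{1-a}(-X^{(\alpha)}_a)/p^{(\alpha)}_1(0)\bigr]$ over the ``no dominant jump'' events $A$, and this is genuinely delicate: $p^{(\alpha)}_1(0)\asymp(\alpha-1)$ while $\sup_x p^{(\alpha)}_1(x)\asymp(\alpha-1)^{-1}$ (the law of $X^{(\alpha)}_1$ concentrates at $-1$), so the worst-case density ratio blows up like $(\alpha-1)^{-2}$, and a fixed truncation level $\delta$ only gives $\P(A)\lesssim e^{-c/\delta}$, a constant, which is useless against $(\alpha-1)^{-2}$. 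One is forced into a multi-scale decomposition of jump sizes with $\delta=\delta(\alpha)\to0$, together with tail upper bounds for $p^{(\alpha)}_t(x)$ that are \emph{uniform in $\alpha$ near $1$} (of the type $p^{(\alpha)}_t(x)\leq C\,(\alpha-1)\,t\,(x+t)^{-\alpha-1}$ for $x+t$ bounded away from $0$). None of these estimates is stated, sketched, or cited in your proposal; ``the classical single big jump principle'' is in the regime of a fixed law and a level tending to infinity, whereas here the level is fixed and the law degenerates as $\alpha\downarrow1$, so it cannot simply be quoted.

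Two smaller points. First, your claim that ``uniformity of the big-jump location follows from the time-reversal invariance'' is not correct as stated: time reversal of the bridge only yields $T\overset{(d)}{=}1-T$ for the jump time $T$, not uniformity; uniformity comes either from cyclic exchangeability of the bridge increments (this is exactly what the paper exploits in Lemma \ref{lem:dur}) or, in your scheme, it would have to be extracted from the density-ratio computation itself (showing the weighted mass of ``big jump in $[0,a]$'' tends to $a$), which again requires the uniform density asymptotics above. Second, note that the paper's Lemma \ref{cv:Xeps} is proved under the It\^o excursion measure with its entrance law, not for the bridge; if you work only with the bridge you can bypass that lemma, but then every piece of information must come out of the absolute-continuity analysis, which raises the stakes on the missing estimates. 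So the architecture (unconditioned process $\to$ bridge $\to$ Vervaat) matches the paper, but the core step is a gap: as written, the proposal assumes the one-big-jump dichotomy it is supposed to prove.
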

}

\begin{remark}Let us mention here that the case $\alpha\downarrow1$ is not (directly) related to Neveu's branching process \cite {Nev92} which is often considered as the limit of a stable branching process when $\alpha \to 1$. Indeed, contrary to the latter, the limit of $ X^{ \mathrm{exc}, \alpha}$ when $\alpha  \downarrow 1$ is deterministic. The reason is that Neveu's branching process has LŽvy measure $r^{-2} \mathbbm {1}_{(0, \infty)}dr$, but recalling our normalization \eqref{eq:levymeasure}, in the limit  $\alpha \downarrow 1$, the LŽvy measure $\Pi^{(\alpha)}$ does \emph{not} converge to $r^{-2} \mathbbm {1}_{(0, \infty)}dr$.
\end{remark}

\cref{prop:cvsautexc} is thus a new ``one-big jump principle'' (see \cref{fig:levys} for an illustration), which is a well-known phenomenon in the context of subexponential distributions (see \cite {FKZ13} and references therein). {See also \cite{AL11,D80} for similar one-big jump principles.}

  \begin{figure}[!h]
 \begin{center}
 \includegraphics[width= 0.70 \linewidth]{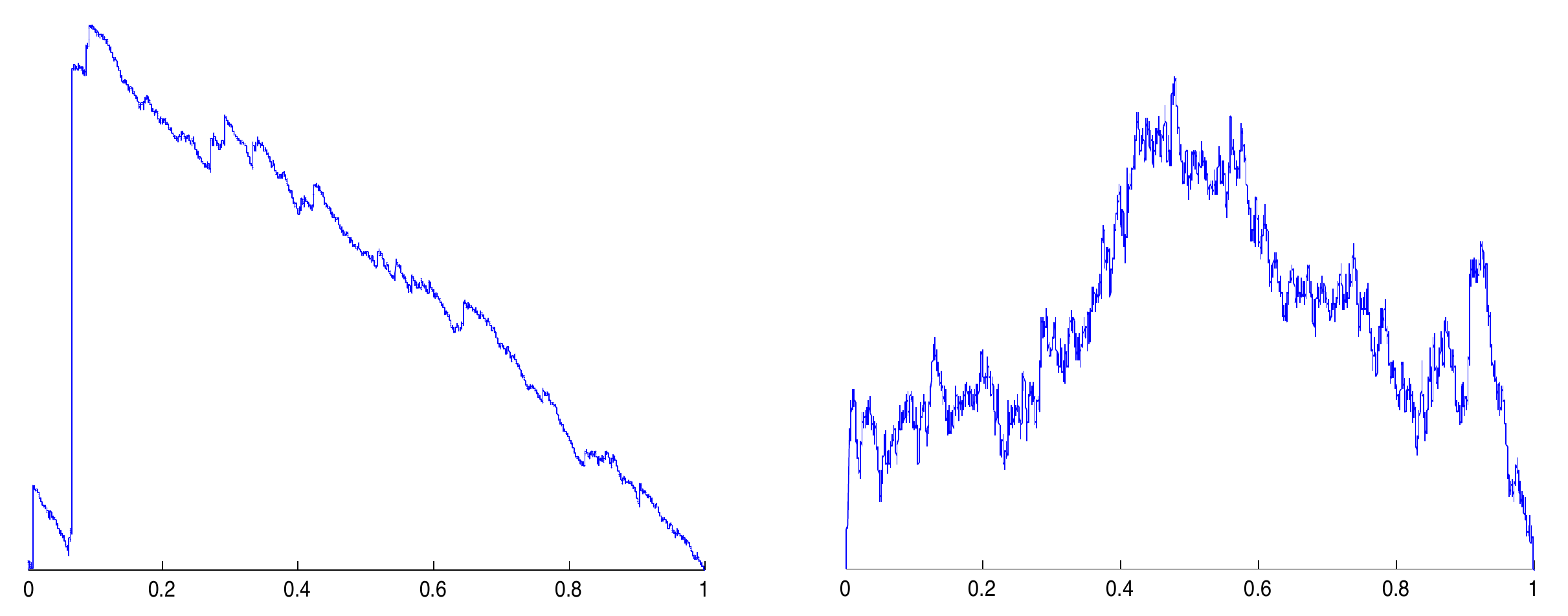}
 \caption{Simulations of $X^ {\mathrm{exc},(\alpha)}$ for respectively $ \alpha=1.00001$ and $ \alpha=1.99$.}
 \label{fig:levys}
 \end{center}
 \end{figure}
 
  The strategy to prove \cref{prop:cvsautexc} is first to establish the convergence of $X^{\exc,(\alpha)} $ on every fixed interval of the form $[ \varepsilon,1]$ with $ \epsilon \in (0,1)$ and then to {study} the behavior {near} $0$.
  
\begin{lemma} \label{cv:Xeps}For every $ \varepsilon \in (0,1)$,  \begin{eqnarray*} \big(X^{ \mathrm{exc}, (\alpha)}_{t}; \varepsilon \leq t \leq 1\big)& \xrightarrow[\alpha \downarrow 1]{( \P)} & (1-t; \, \varepsilon \leq t \leq 1),  \end{eqnarray*} 
where the convergence holds in probability for {the} uniform norm.
\end{lemma}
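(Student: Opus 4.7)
My plan is to follow the transfer strategy outlined in Section \ref{sec:absolutecontinuity}: first establish an unconditioned version of the statement for the $\alpha$-stable L\'evy process $X^{(\alpha)}$, then pass to the bridge $X^{\mathrm{br},(\alpha)}$ by absolute continuity, and finally to the excursion $X^{\mathrm{exc},(\alpha)}$ via the Vervaat transformation.

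I would first show that $X^{(\alpha)} \to -\mathrm{Id}$ in probability, uniformly on $[0,1]$, as $\alpha \downarrow 1$. The one-dimensional convergence in distribution is immediate from
$$\E[\exp(-\lambda X^{(\alpha)}_t)] = \exp(t\lambda^\alpha) \xrightarrow[\alpha \downarrow 1]{} \exp(t\lambda),$$
the Laplace transform of the Dirac mass at $-t$. Independence and stationarity of the increments upgrade this to finite-dimensional convergence, and a standard tightness argument for L\'evy processes whose finite-dimensional marginals converge to those of a continuous deterministic limit upgrades it further to uniform convergence in probability.

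Next I would apply the absolute continuity formula between $X^{\mathrm{br},(\alpha)}$ and $X^{(\alpha)}$ on $[0,1-\delta]$, combined with its time-reversed counterpart on $[\delta,1]$ and patched on the overlap, to show that the bridge $X^{\mathrm{br},(\alpha)}$ is close (in the uniform topology off a neighborhood of its unique largest jump) to a piecewise-affine process of the form $t \mapsto -t + \mathbbm{1}_{t \geq V}$ for some random $V \in [0,1]$. Heuristically, to pass from $0$ to $0$ while obeying the approximate ``equation of motion'' $dX \approx -dt$, the bridge must accumulate a total positive mass of $1$, which under the spectrally positive $\alpha$-stable regime as $\alpha \downarrow 1$ is concentrated in a single large jump of size close to $1$. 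The Vervaat transformation, which cyclically shifts the bridge at its minimum to obtain $X^{\mathrm{exc},(\alpha)}$, then sends this limiting bridge to a deterministic process: if the minimum is attained at $V^-$ with value $-V$, a direct computation gives $B_{(V+t) \bmod 1} - B_{V^-} = 1-t$ for every $t \in (0,1]$. Uniform convergence on $[\varepsilon,1]$ of $X^{\mathrm{exc},(\alpha)}$ to $1-t$ then follows.

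The principal obstacle lies in the bridge step: controlling the Radon-Nikodym density $p^{(\alpha)}_\delta(-X^{(\alpha)}_{1-\delta})/p^{(\alpha)}_1(0)$ as $\alpha \downarrow 1$. Both numerator and denominator become degenerate (the density $p^{(\alpha)}_t$ concentrates at its mode near $-t$) but must be evaluated at the ``atypical'' value $-X^{(\alpha)}_{1-\delta} \approx 1-\delta$, so fine tail asymptotics of the stable density are needed to show that the ratio remains under control in the limit. A viable alternative is to bypass the bridge and work directly with the jump decomposition provided by \cref{coro:exc}, showing via a concentration argument on the Poisson point process of jumps (using \cref{prop:itomeasure}) that $X^{\mathrm{exc},(\alpha)}_t = \sum_{0 \preccurlyeq s \preccurlyeq t} \Delta X^{\mathrm{exc},(\alpha)}_s \cdot u^t_s$ is asymptotically dominated by the single term corresponding to the largest jump, whose size tends to $1$ and whose parameter $u^t_s$ tends to $t$.
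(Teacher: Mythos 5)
Your first step (the unconditioned convergence $X^{(\alpha)}\to -\mathrm{Id}$) matches the paper's, but from there your route diverges and contains a genuine gap. The paper does \emph{not} go through the bridge at all for this lemma: it works under the It\^o excursion measure, using the scaling property of $\underline{n}^{(\alpha)}$ to replace conditioning on $\{\zeta=1\}$ by conditioning on $\{\zeta\in[1\pm\varepsilon]\}$, the Markov property under $\underline{n}^{(\alpha)}$ with the entrance law $q^{(\alpha)}_{\varepsilon}$ at time $\varepsilon$, the unconditioned convergence applied to the process started from that entrance law and stopped at $0$, and finally the explicit formula $\underline{n}^{(\alpha)}(\zeta>t)=t^{-1/\alpha}/\Gamma(1-1/\alpha)$ to compare the two conditionings. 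This completely avoids any density estimate.

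The gap in your route is exactly the step you flag as ``the principal obstacle'' and then leave unresolved. Absolute continuity cannot softly transfer the behavior of $X^{(\alpha)}$ to $X^{\mathrm{br},(\alpha)}$ here: under the unconditioned law the path has no macroscopic jump, while the limiting bridge shape $t\mapsto -t+\mathbbm{1}_{\{t\geq V\}}$ you invoke is supported on events whose unconditioned probability vanishes as $\alpha\downarrow 1$, so all of the content lies in quantifying the reweighting by $p^{(\alpha)}_{1-a}(-X^{(\alpha)}_a)/p^{(\alpha)}_1(0)$, where both numerator and denominator are evaluated in the tail and tend to $0$. No uniform-in-$\alpha$ estimate of this kind is available in the paper (\cref{lem:technique} only covers $\alpha\in(3/2,2]$), and producing one is a substantial piece of work you have not done. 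Moreover, the assertion that the bridge concentrates its positive mass in a single jump of size close to $1$ is precisely the one-big-jump principle of \cref{prop:cvsautexc}, which the paper proves \emph{after} and \emph{from} \cref{cv:Xeps} via the nontrivial shuffling argument of \cref{lem:dur} — the paper explicitly warns that convergence away from the jump does not by itself exclude an accumulation of many small jumps, and the same warning applies to your bridge claim; so your plan essentially assumes the harder downstream statement to prove the easier one. Your fallback sketch (the jump decomposition of \cref{coro:exc} plus a Poisson concentration argument via \cref{prop:itomeasure}) has the same defect: \cref{prop:itomeasure} describes descents of the \emph{unconditioned} two-sided process, and transferring it to the normalized excursion as $\alpha\downarrow1$ is again the very step that needs the missing uniform control. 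To repair the proof along the paper's lines, condition on the lifetime under $\underline{n}^{(\alpha)}$ and use the entrance law at time $\varepsilon$ instead of the bridge.
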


\proof[Proof of  \cref{cv:Xeps}]  Following the spirit of the proof of  \cref{prop:cve}, we first establish an analog statement for the unconditioned process $X^{(\alpha)}$ by proving that
\begin{equation}
\label{eq:cvdrift} X^{(\alpha)}  \quad\mathop{\longrightarrow}^{(d)}_{ \alpha \downarrow 1} \quad  ( -t ; \, t \geq 0), 
\end{equation} where the convergence holds in distribution for the uniform convergence on every compact subset of $ \mathbb{R}_{+}$. To {establish} \eqref{eq:cvdrift}, we also rely on \cite[Theorems V.19, V.23]{Pol84} and easily check that Conditions $(a),(b)$ and $(c)$  hold, {giving} \eqref{eq:cvdrift}. Fix $ \varepsilon  \in (0, {1}/{10})$. We shall use the notation $[a \pm b] := [a-b, a+b]$ for $a \in \mathbb{R}$ and $b >0$. We also introduce the functions $ \ell(s) = 1-s$  and $ \ell_{ \varepsilon}(s) = 1- \varepsilon-s$ for $s \in [0,1]$. To prove the lemma, we show that for every $ \varepsilon >0$ we have 
$$ \underline{n}^{(\alpha)}\big(  \{ \omega_{t} \in [1-t \pm  \varepsilon], \ \forall t \in [ \varepsilon, 1]\} \mid \zeta =1\big) \quad \xrightarrow[\alpha \downarrow 1]{} \quad 1.$$
By the scaling property of the measure $\underline{n}^{(\alpha)}$ (see {property (iii) in} \cref{sec:excmin}), it is sufficient to show that 
  \begin{eqnarray} \label{eq:goalscal} \underline{n}^{(\alpha)}\Big(  \sup_{t \in [ \varepsilon, \zeta]}|\omega_{t} - \ell(t)| \leq 10 \varepsilon \mid \zeta \in [1 \pm  \varepsilon]\Big) & \xrightarrow[ \alpha \downarrow 1]{} & 1.  \end{eqnarray} 
  {For $t  \geq 0$, denote by  $q_{t}^{(\alpha)}(dx)$ the entrance measure at time $t$ under $ \underline{n}^{( \alpha)}$, defined by relation
 $$ \underline{n}^{( \alpha)} \left( f( \omega_{t}) \mathbbm{1}_{  \{ \zeta>t\}}\right)= \int_{0}^ \infty f(x) q^{(\alpha)}_{t}(dx)$$
  for every measurable function $f: \R_{+} \rightarrow \R_{+}$
  }
Then, {using the fact that, for every $t>0$, under the conditional probability measure $\underline{n}^{( \alpha)}( \, \cdot \, | \z>t)$, the process
$( \omega_{t+s})_{s \geq 0}$ is Markovian with entrance law $q^{(\alpha)}_{t}(dx)$ and transition kernels of $X^{( \alpha)}$
stopped upon hitting $0$, we get
}
 \begin{eqnarray} && \underline{n}^{(\alpha)}\Big(  \sup_{t \in [ \varepsilon, \zeta]}|\omega_{t} - \ell(t)| \leq 10 \varepsilon \mid \zeta \in [1 \pm  \varepsilon]\Big) \nonumber \\
 & = &  \frac{1}{\underline{n}^{(\alpha)}( \zeta \in [1 \pm \varepsilon])}\int_{0}^\infty q_{ \varepsilon}^{(\alpha)}(dx) P_{x}^{(\alpha)} \Big ( \sup_{t\in [0, \tau]}| X^{(\alpha)}_{t} -  \ell_{ \varepsilon}(t)| \leq 10 \varepsilon \ \mbox{ and } \ \tau\in[1-\varepsilon \pm  \varepsilon] \Big), \nonumber\\
  \label{eq:entrance}  \end{eqnarray} where $P_{x}^{(\alpha)}$ denotes the distribution of a standard $\alpha$-stable  process $X^{(\alpha)}$ started from $x$ and stopped at the first time $\tau$ when it touches $0$. From  \eqref{eq:cvdrift} it follows that for every $\delta \in (0, \varepsilon)$ the convergence
  \begin{eqnarray*} P_{x}^{(\alpha)} \Big ( \sup_{[0, \tau]} |X^{(\alpha)} - \ell_{ \varepsilon}| \leq 10 \varepsilon  \mbox{ and } \tau\in[1-\varepsilon \pm  \varepsilon] \Big) & \xrightarrow[\alpha\downarrow1]{}& 1  \end{eqnarray*}
  {holds uniformly in $x \in [1- \varepsilon \pm (\varepsilon- \delta)].$} Consequently
   \begin{eqnarray} \label{liminf} \liminf_{\alpha \downarrow 1} \frac{ \displaystyle \int_{0}^\infty q_{ \varepsilon}^{(\alpha)}(dx) P_{x}^{(\alpha)} \Big (  \sup_{[0, \tau]} ||X^{(\alpha)} - \ell_{ \varepsilon}||\leq 10 \varepsilon \mbox{ and } \ \tau\in[1-\varepsilon \pm  \varepsilon] \Big)}{ \displaystyle \int_{0}^\infty q_{ \varepsilon}^{(\alpha)}(dx)  \mathbbm{1}_{x \in [1- \varepsilon \pm ( \varepsilon-\delta)]}} \geq 1.  \end{eqnarray}
   On the other hand, we can write  provided that $2 \delta<  \varepsilon$ (notice that $ 1- \varepsilon + 2 \delta > \varepsilon$)
   $$ \underline{n}^{(\alpha)}( \zeta \in [1 \pm ( \varepsilon-2 \delta)]) = \int_{0}^\infty q_{ \varepsilon}^{(\alpha)}(dx) P_{x}^{(\alpha)} \Big (\tau\in[1-\varepsilon \pm ( \varepsilon- 2 \delta)] \Big).$$ 
Convergence \eqref{eq:cvdrift} then entails that $ g(x,\alpha) := P_{x}^{(\alpha)} (\tau\in[1-\varepsilon \pm ( \varepsilon-2 \delta)] )$ also tends towards $0$ as $\alpha\downarrow1$, uniformly for $x \in \mathbb{R}_{+} \backslash [1- \varepsilon \pm ( \varepsilon - \delta)]$.    Since the total mass $ \int_{0}^\infty q_{ \varepsilon}^{(\alpha)} (dx) = \underline{n}^{(\alpha)}( \zeta > \varepsilon)$ is finite, the dominated convergence theorem implies that $$\int_{\mathbb{R}_{+} \backslash [1- \varepsilon \pm ( \varepsilon - \delta)]} q_{ \varepsilon}^{ (\alpha)}(dx) g(x,\alpha)  \quad\xrightarrow[ \alpha \downarrow 1]{}  \quad 0.$$
Finally, as $g(x,\alpha)$ is bounded by $1$ we get by dominated convergence and the last display that
         \begin{eqnarray} \label{limsup}   \liminf_{\alpha \downarrow 1} \frac{\displaystyle \int_{0}^\infty q_{ \varepsilon}^{(\alpha)}(dx)  \mathbbm{1}_{x \in [1- \varepsilon \pm ( \varepsilon-\delta)]}}{\underline{n}^{(\alpha)}( \zeta \in [1 \pm ( \varepsilon-2 \delta)])}  = {\liminf_{\alpha \downarrow 1} \frac{\displaystyle \int_{0}^\infty q_{ \varepsilon}^{(\alpha)}(dx)  \mathbbm{1}_{x \in [1- \varepsilon \pm ( \varepsilon-\delta)]}}{\displaystyle \int_{0}^\infty q_{ \varepsilon}^{(\alpha)}(dx) g(x, \alpha) }} & \geq & 1.\end{eqnarray}
    Combining \eqref{liminf} and \eqref{limsup} with \eqref{eq:entrance} we deduce that 
    \begin{equation}
    \label{eq:e}\liminf_{\alpha \downarrow 1} \underline{n}^{(\alpha)}\Big(  \sup_{t \in [ \varepsilon, \zeta]} | \omega_{t} - \ell(t)| \leq 10 \varepsilon \mid \zeta \in [1 \pm  \varepsilon]\Big) \geq \frac{\underline{n}^{(\alpha)}( \zeta \in [1 \pm ( \varepsilon-2 \delta)])}{\underline{n}^{(\alpha)}( \zeta \in [1 \pm  \varepsilon])}.
    \end{equation} Since $\underline{n}^{(\alpha)}( \zeta >t) = t^{-1/\alpha} {/ \Gamma(1-1/ \alpha)}$ by property (iii) in \cref{sec:excmin}, it follows that  the right-hand side of \eqref{eq:e} tends to $1$ as $ \delta \rightarrow 0$. This completes the proof. \endproof 

We have seen in \cref{cv:Xeps} that $X^{\exc,(\alpha)}$ converges to the deterministic function $x \mapsto 1-x$ over  every interval $[ \varepsilon,1]$ for {every} $ \varepsilon>0$. Still, this {does not imply}  \cref{prop:cvsautexc} because, { as $\alpha \downarrow 1$, the difference of magnitude roughly $1$ between times $0$ and $ \epsilon$ could be caused by the accumulation of many small jumps of total sum of order $1$ and not by a single big jump of order $1$. We shall show that this is not the case by using the LŽvy bridge $X^{ \mathrm{br}, (\alpha)}$} and  and a shuffling argument.

\proof[Proof of \cref{prop:cvsautexc}.]
For $ \varepsilon>0$ and $Y : [0,1] \to \mathbb{R}$, {let $\mathsf{J}(Y, \varepsilon)$}  be the set defined by
$$ \mathsf{J}(Y, \varepsilon) = \left\{ \exists u \in [0,1] : \begin{array}{ll} |Y(t) {+}t| \leq \varepsilon  &\forall t \in [0,  u],\\
|Y(t)-(1 {-}t)| \leq \varepsilon &  \forall t \in [u+ \varepsilon, 1] { \cap [0,1]}
\end{array} \right\}.$$ Applying the Vervaat transformation to $ X^{ \mathrm{br}, ( \alpha)}$, we deduce from \cref{cv:Xeps} that for every $ \varepsilon>0$ we have   \begin{eqnarray} \label{eq:Jbra}  \mathbb{P} \left(\mathsf{J}( X^{ \mathrm{br}, (\alpha)}, \varepsilon)\right) & \xrightarrow[ \alpha \downarrow1]{} & 1.  \end{eqnarray}

We then rely on the following result: \begin{lemma}  \label{lem:dur}
For every $ \alpha \in (1,2)$, let $(B^{(\alpha)}_{t}; 0 \leq t \leq 1)$ be a cˆdlˆg process with $0=B_{0}^{(\alpha)} = B_{1}^{(\alpha)}$ and such that the following two conditions hold:
\begin{enumerate}
\item[(i)] For every $ \varepsilon>0$, we have $\mathbb{P}\big(  \mathsf{J}( B^{(\alpha)}, \varepsilon)\big) \to 1$ as $ \alpha \downarrow 1$;
\item[(ii)] For every $\alpha \in (1,2)$ and every $n \geq 1$, the increments $$\left\{ \big(B^{(\alpha)}_{t+ i/n}-B^{(\alpha)}_{i/n}\big)_{0 \leq t \leq 1/n} : {0\leq i\leq n-1}\right\}$$ are exchangeable.
\end{enumerate}
Then  \begin{eqnarray} \label{eq:goallem}B^{(\alpha)} \quad \xrightarrow[\alpha\downarrow 1]{(d)} \quad \left(\mathbbm {1}_{  \{U \leq t\}}-t; \, 0 \leq t \leq 1 \right),   \end{eqnarray}
where the convergence holds in distribution for the Skorokhod topology on $ \D([0,1], \R)$ and where $U$ is an {independent}  uniform variable over $[0,1]$.
\end{lemma}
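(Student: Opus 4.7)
\emph{Strategy.} The plan has three steps: (a) select a measurable ``big-jump'' time $u_{\alpha}$ of $B^{(\alpha)}$; (b) use the exchangeability assumption (ii) to show that $u_{\alpha}$ converges in distribution to a uniform variable $U$ on $[0,1]$; (c) deduce the Skorokhod convergence of $B^{(\alpha)}$ to $Z_{U}(t) := \mathbbm{1}_{\{U \leq t\}} - t$.

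\emph{Step (a).} Fix $\varepsilon_{0} \in (0, 1/4)$. On the event $\mathsf{J}(B^{(\alpha)}, \varepsilon_{0})$, the process must change by an amount close to $1$ within a window of length $\varepsilon_{0}$ while staying within $\varepsilon_{0}$ of the affine functions $-t$ and $1-t$ off this window. A short argument shows that on this event $B^{(\alpha)}$ has exactly one jump of size greater than $1/2$ and all other jumps have size at most $2\varepsilon_{0} < 1/2$. Define $u_{\alpha}$ as the location of this unique big jump (and $u_\alpha = 0$ off $\mathsf{J}(B^{(\alpha)}, \varepsilon_{0})$); this is a measurable functional of $B^{(\alpha)}$.

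\emph{Step (b).} Fix $n \geq 1$ and set $M_{i}^{(\alpha)} = \sup\{\Delta B^{(\alpha)}_{s} : s \in [i/n, (i+1)/n)\}$ (with $\sup\emptyset = 0$), for $0 \leq i \leq n-1$. Since each $M_{i}^{(\alpha)}$ is a functional of the $i$-th increment of $B^{(\alpha)}$, assumption (ii) implies that the vector $(M_{0}^{(\alpha)}, \ldots, M_{n-1}^{(\alpha)})$ is exchangeable. Therefore the disjoint events $A_{i} := \{M_{i}^{(\alpha)} > M_{j}^{(\alpha)} \text{ for every } j \neq i\}$, $0 \leq i \leq n-1$, all have equal probability. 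Using (i) with $\varepsilon < 1/(4n)$, on $\mathsf{J}(B^{(\alpha)}, \varepsilon)$ the unique big jump (of size larger than $1/2$) dominates strictly every other jump (of size at most $2\varepsilon$), so $\mathsf{J}(B^{(\alpha)}, \varepsilon) \subset A_{0} \cup \cdots \cup A_{n-1}$. We deduce that $\mathbb{P}(A_{0} \cup \cdots \cup A_{n-1}) \to 1$ and hence $\mathbb{P}(A_{i}) \to 1/n$ for each $i$, as $\alpha \downarrow 1$. Since on $\mathsf{J}(B^{(\alpha)}, \varepsilon) \cap A_{i}$ the big jump and hence $u_{\alpha}$ sits in $[i/n, (i+1)/n)$, we obtain
\[
\mathbb{P}\bigl(u_{\alpha} \in [i/n, (i+1)/n)\bigr) \xrightarrow[\alpha \downarrow 1]{} \frac{1}{n}, \qquad 0 \leq i \leq n-1.
\]
As this holds for every $n$ and since $u_\alpha \in [0,1]$ is automatically tight, every subsequential limit of $u_\alpha$ must put mass $1/n$ on every dyadic subinterval of length $1/n$. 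This characterizes the uniform law on $[0,1]$, so $u_{\alpha}$ converges in distribution to $U$ uniform on $[0,1]$.

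\emph{Step (c).} It suffices to check that $d_{\mathrm{Sk}}(B^{(\alpha)}, Z_{u_{\alpha}}) \leq C\varepsilon$ on $\mathsf{J}(B^{(\alpha)}, \varepsilon)$ for some absolute constant $C$. Use the piecewise affine time change $\lambda : [0,1] \to [0,1]$ that is the identity outside $[u, u + \varepsilon]$ (where $u$ is the witness of $\mathsf{J}$) and maps $u$ to $u_{\alpha}$; then $\|\lambda - \mathrm{id}\|_{\infty} \leq \varepsilon$, and the bounds defining $\mathsf{J}$ directly give $\|B^{(\alpha)} \circ \lambda - Z_{u_{\alpha}}\|_{\infty} = O(\varepsilon)$ off a window of length $O(\varepsilon)$ around the jump, while the jump locations have been aligned by construction. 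Consequently, for every bounded Skorokhod-continuous functional $F$, $\mathbb{E}[F(B^{(\alpha)})] - \mathbb{E}[F(Z_{u_{\alpha}})] \to 0$. Combining with Step (b) and the Skorokhod continuity of $u \mapsto Z_{u}$ on $(0, 1)$ (which carries full mass under the uniform law), $\mathbb{E}[F(Z_{u_{\alpha}})] \to \mathbb{E}[F(Z_{U})]$, which establishes \eqref{eq:goallem}.

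\emph{Main obstacle.} The conceptual heart of the argument is Step (b): one must exploit exchangeability \emph{at every scale} $1/n$ to force the unique big jump to be equidistributed over the $n$ subintervals, and then let $n \to \infty$ to produce the uniform law. Step (c) is a somewhat tedious but routine verification via an explicit time change, and Step (a) only requires unpacking the definition of $\mathsf{J}$.
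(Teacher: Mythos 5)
There is a genuine gap, and it sits exactly at the point the paper's proof is designed to handle. In Step (a) you claim that on $\mathsf{J}(B^{(\alpha)},\varepsilon_0)$ ``a short argument shows'' that $B^{(\alpha)}$ has exactly one jump of size greater than $1/2$, all other jumps being of size at most $2\varepsilon_0$. This cannot be deduced from the event $\mathsf{J}$: by its definition it only constrains the path on $[0,u]$ and on $[u+\varepsilon_0,1]\cap[0,1]$, and says nothing whatsoever about the window $(u,u+\varepsilon_0)$. A continuous path that follows $-t$, climbs linearly by roughly $1$ across the window, and then follows $1-t$ belongs to $\mathsf{J}(\cdot,\varepsilon_0)$ and has no jump at all; likewise the rise could come from many small jumps, or from several jumps larger than $1/2$ separated by downward excursions. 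Ruling these scenarios out is precisely the content of the lemma (the paper stresses, just before invoking it, that the rise of magnitude $1$ ``could be caused by the accumulation of many small jumps''), and it genuinely requires hypothesis (ii): the paper cuts the bridge into $n$ pieces, shuffles them (law-preserving by exchangeability), first excludes two macroscopic jumps, and then shows, via the empirical variance $\Sigma_k$ of the small increments and Billingsley's invariance principle for exchangeable arrays, that if the small increments carried macroscopic mass the shuffled bridge would resemble a non-degenerate Brownian bridge plus drift, contradicting (i); this is what forces a single jump of size tending to $1$. Your proposal never uses exchangeability for this purpose, only for the comparatively easy equidistribution of the jump location in Step (b).

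The gap propagates. Step (b)'s inclusion $\mathsf{J}(B^{(\alpha)},\varepsilon)\subset A_0\cup\cdots\cup A_{n-1}$ relies on all jumps outside one dominant jump being small, which is unjustified inside the window; and Step (c)'s bound $d_{\mathrm{Sk}}(B^{(\alpha)},Z_{u_\alpha})\leq C\varepsilon$ on $\mathsf{J}(B^{(\alpha)},\varepsilon)$ fails without control of the path in the window: if the unit rise is continuous, the Skorokhod distance to $Z_{u_\alpha}$ is at least of order $1/2$ (the unit jump of $Z_{u_\alpha}$ cannot be matched), and if the path spikes inside the window the distance can be arbitrarily large. Your Step (b) idea -- exchangeability at every scale $1/n$ forcing the big-jump time to be asymptotically uniform -- is sound and would be a clean way to conclude once the one-big-jump structure (single jump of size close to $1$, everything else uniformly small) is established, but it is not the main obstacle; the main obstacle is the assertion you dismissed in Step (a), and the paper's shuffling argument is what fills it.
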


If we assume for the moment this lemma, the proof of \cref{prop:cvsautexc} is completed as follows. The LŽvy bridges $  X^{ \mathrm{br},(\alpha)}$ satisfy the assumptions of Lemma \ref {lem:dur}. Indeed, (i) is satisfies thanks to \eqref{eq:Jbra} and (ii) follows from absolute continuity. Lemma \ref {lem:dur} entails that $X^{ \mathrm{br},(\alpha)} \to \left(\mathbbm {1}_{  \{U \leq t\}}-t; \, 0 \leq t \leq 1 \right)$ the convergence holds in distribution for the Skorokhod topology as $ \alpha \downarrow 1$. It then suffices to apply the Vervaat transform to the latter convergence to get the desired result. \endproof

It remains to establish Lemma \ref {lem:dur}.

\proof[Proof of Lemma \ref{lem:dur}]  Fix $\alpha \in (1,2)$ and $n \geq 1$. We introduce the following shuffling operation on $B^{(\alpha)}$: cut the bridge $B^{(\alpha)}$ into $n$ pieces between times $[i/n, (i+1)/n]$ for $0 \leq i \leq  n-1$. Then ``shuffle'' these $n$ pieces uniformly at random, meaning that these $n$ pieces are concatenated after changing their order by using  an independent uniform permutation of $\{1, 2, \ldots, n\}$. Denote by $ \widetilde{B}^{(\alpha),n}$ the  process obtained in this way. Assumption (ii) garantees that $ \widetilde{B}^{(\alpha),n}$ has the same distribution as $ B^{(\alpha)}$. In particular, for every $ \epsilon >0$, $ \mathbb{P}( \mathsf{J}(\widetilde{B}^{(\alpha), n},  \varepsilon)) \to 1$ as $\alpha \downarrow 1$, uniformly in $n$.

\textsc{First step: at most one large jump.} We first show that for every $ \delta >0$, the probability that there are two jumps in $B^{(\alpha)}$ larger than $\delta$ tends to $0$ as $ \alpha \downarrow 1$. To this end, argue by contradiction and assume that there exists $ \eta >0$ such that  along a subsequence $\alpha_{k} \downarrow 1$  with probability at least $\eta$ the bridge $B^{(\alpha_{k})}$ has two jump times $T_{1}^{({k})} \ne T_{2}^{({k})}$ at which $ \Delta ^{(\alpha_{k})}$ is greater than $\delta$. Now, choose $n_{k} \to \infty$ so that 
$$  \Pr{  \left | T_{1}^{({k})} - T_{2}^{({k})} \right | > 1/n_{k}}  \xrightarrow[k\to\infty]{} 1.$$
But, conditionally on the event $\{  | T_{1}^{({k})} - T_{2}^{({k})}  | > 1/n_{k}\}$ , with probability tending to one as $k \rightarrow \infty$, these two jumps will fall in different time intervals of the form $[i/n_{k}, (i+1)/n_{k}]$ in the shuffled process $B^{(\alpha_{k}),{n_k}}$.  Hence, we deduce that with probability asymptotically larger than  $\eta/100$ (this value is not optimal), there exist two jump times $\widetilde{T}_{1}^{(k)}$ and  $\widetilde{T}_{2}^{(k)}$ of $ \widetilde{B}^{(\alpha_{k}),n_{k}}$ such that 
$$ |\widetilde{T}_{1}^{(k)} - \widetilde{T}_{2}^{(k)}| \geq  \frac{1}{3} \qquad \mbox{ and } \qquad \Delta \widetilde{B}^{(\alpha_{k}), n_{k}}_{\widetilde{T}_{1}^{(k)}} \geq \delta, \quad \Delta \widetilde{B}^{(\alpha_{k}), n_{k}}_{\widetilde{T}_{2}^{(k)}} \geq \delta.$$
If one chooses $ \varepsilon \in (0, \delta \wedge 1/4)$, this contradicts the fact that $ \mathbb{P}( \mathsf{J}(\widetilde{B}^{(\alpha_{k}), n_{k}},  \varepsilon)) \to 1$ as $k \to \infty$.

\textsc{Second step: one jump of size roughly $1$.} We only sketch the argument and leave the details to the reader. Denote by $T_{\alpha}$ the time when
$B^{(\alpha)}$ achieves its largest jump. Let $ \alpha_{k}$ be a sequence such that $\alpha_{k} \downarrow 1$ as $k \rightarrow \infty$. Let $0 \leq I_{k} \leq n_{k}-1$ be the integer such that $T_{\alpha_{k}} \in [I_{k}/n_{k}, (I_{k}+1)/n_{k}]$, and set $$\delta_{k} \quad := \quad B^{(\alpha_{k})}_{(I_{k}+1)/n_{k}}- B^{(\alpha_{k})}_{I_{k}/n_{k}}.$$  Then let $n_{k} \to \infty$ be a sequence of integers such that the following three converges hold in probability as $k \rightarrow \infty$:
\begin{enumerate}
\item[(i)] $ \displaystyle |\delta_{k}-\Delta B^{(\alpha_{k})}_{T_{\alpha_{k}}}|  \quad\mathop{\longrightarrow}^{(\P)}_{k \rightarrow \infty} \quad 0;$
\item[(ii)] $ \displaystyle\frac{\delta_{k}^2 \vee 1}{n_{k}} \quad\mathop{\longrightarrow}^{(\P)}_{k \rightarrow \infty} \quad 0;$
\item[(iii)] $ \displaystyle\sup_{ i \ne I_{k}}\sup_{0 \leq t \leq 1/n_{k}}
  \left|B^{(\alpha_{k})}_{t+i/n_{k}} - B^{(\alpha_{k})}_{i/n_{k}}\right|   \quad\mathop{\longrightarrow}^{(\P)}_{k \rightarrow \infty} \quad  0$.
  
Indeed, this is possible since, by the first step, we know that all the jumps of $B^{(\alpha_{k})}$, its largest jump excluded, converge in probability to $0$ as $k \rightarrow \infty$.
\end{enumerate}

Denote by $ \widehat{B}^{(\alpha_{k}),n_k}$ the function on $[0,1]$ obtained by doing a random shuffle of $B^{(\alpha_k)}$ of length $1/n_{k}$ after discarding the time interval  that contains $T_{\alpha_{k}}$, and then scaling time by a factor $n_{k}/(n_{k}-1)$ so that $ \widehat{B}^{(\alpha_k),n_k}$ is defined on $[0,1]$. The proof is completed if we manage to check that $ \widehat{B}^{(\alpha_{k}),n_{k}}$ converges in probability towards the function $ t \mapsto -t$ and $ \delta_{k} \to 1$ in probability.

To do so, let us introduce the empirical variance of the small increments 
$${\Sigma}_{k} := \sum_{0 \leq i \ne I_{k} \leq n_{k-1}}\left|B^{(\alpha_{k})}_{(i+1)/n_{k}} - B^{(\alpha_{k})}_{i/n_{k}}\right|^2.$$
We shall first establish that   $ {\Sigma}_{k} \to 0$ in probability as $k \rightarrow \infty$. To this end, suppose by contradiction that $\Sigma_{k}$ does not converge to $0$ in probability as $k \rightarrow \infty$. Then, up to extraction, there exists a fixed $c>0$ such that $ \mathbb{P}( \Sigma_{k} \geq c) \geq c$ for every $k$ large enough. Then consider  the family of $n_{k}-1$ increments 
$$ \left\{X_{i,k} \right\}_{0 \leq i \ne I_{k} \leq n_{k}-1} := \left\{ B^{(\alpha_{k})}_{(i+1)/n_{k}}-B^{(\alpha_{k})}_{i/n_{k}} + \frac{\delta_{k}}{n_{k}-1} \right\}_{0 \leq i \ne I_{k} \leq n_{k}-1}.$$
Observe that we have 
\begin{equation}
\label{eq:cv3} \sum_{0 \leq i \ne I_{k} \leq n_{k}-1} X_{i,k} = 0, \qquad   \sup_{i \ne I_{k}} \left |X_{i,k} \right|   \quad\mathop{\longrightarrow}^{(\P)}_{k \rightarrow \infty} \quad 0, \qquad \sum_{0 \leq i \ne I_{k} \leq n_{k}-1} X_{i,k}^2 - \Sigma_{k}  \quad\mathop{\longrightarrow}^{(\P)}_{k \rightarrow \infty} \quad 0.
\end{equation}
For the second and third convergences, we use (ii) and (iii). 

Then let $ \pi :  \{ 1,2, \ldots, n_{k}-1\} \rightarrow \{ 0,1, \ldots, n_{k}-1\} \backslash \{ I_{k}\}$ be a uniform bijection, and define the random continuous function $ \overline{B}^{(\alpha_k),n_k}$ on $[0,1]$ by linearly interpolating between the points of coordinates $ (0,0)$, $ \left( \frac{i}{n_{k}-1}, X_{ \pi(i),k}\right)$ for $1 \leq i \leq n_{k}-1$. 
From \cite[Theorem 24.2]{Bil68} and \eqref{eq:cv3}, it follows that, on the event $ \{ \Sigma_{k} \geq c\}$, the random function 
$$  \left(  \frac{\overline{B}^{(\alpha_{k}),n_{k}}_{t}}{ \sqrt { \Sigma_{k}}}; 0 \leq t \leq 1 \right)     $$
 converges in distribution towards a standard Brownian bridge of variance $1$. By (iii), the previous distributional convergence also holds when $ \overline{B}^{(\alpha_{k}),n_{k}}$ is replaced by $( \widehat{B}^{(\alpha_{k}), n_{k}}_{t} + \delta_{k} t )_{0 \leq t \leq 1}$. A moment's though shows then the condition $ \mathbb{P}( \mathsf{J}(\tilde{B}^{(\alpha_{k}),n_{k}}, \varepsilon)) \to 1$ cannot be satisfied and hence that  $ {\Sigma}_{k} \to 0$ in probability.

  Then the proofs of \cite[Theorems 24.1 and 24.2]{Bil68} give that the random function $$ \left(\overline{B}^{(\alpha_{k}),n_{k}}_{t} \ ; \ 0 \leq t \leq 1 \right) $$ converges in probability towards the constant function equal to $0$ on $[0,1]$, denoted by $ \mathbf {0}$. As before, using (iii), we deduce that $( \widehat{B}^{(\alpha_{k}), n_{k}}_{t} + \delta_{k} t )_{0 \leq t \leq 1}$ in turn converges to $ \mathbf {0}$  in probability. Using the fact that $ \mathsf{J}( \tilde{B}^{(\alpha_{k}),n_{k}}, \varepsilon) \rightarrow 1$ as $k \rightarrow \infty$, we get that  
  $\delta_{k} \to 1$ in probability. Using (i), this implies that $\Delta B_{T_{\alpha_{k}}}^{(\alpha_{k})} \to 1$ in probability. It follows that $ \widehat{B}^{(\alpha_{k}), n_{k}}$ indeed converges to $t \mapsto -t$ in probability. The details are left to the reader. \endproof

\subsubsection{Others lemmas}

Denote by $ \oD(Y)$ the size of the largest jump of a cˆdlˆg function $Y$. This quantity is of interest since by construction the length of the longest cycle in the stable looptree $ \La$ is equal to $\oD( X^{ \mathrm{exc}, (\alpha)})$.

\begin {proposition} \label{prop:longueur}We have: $$\Es{\oD( X^{ \mathrm{exc}, (\alpha)})}= \Gamma \left(1- \frac{1}{ \alpha} \right) \beta ,$$
where $ \beta>0$ is the unique solution to the equation
$$ \sum_{n=0}^ \infty \frac{(-1)^n  \beta^n}{(n- \alpha) n!}=0.$$
\end{proposition}

Setting $f( \beta)=\sum_{n=0}^ \infty {(-1)^n  \beta^n}/({(n- \alpha) n!})$, note that existence and uniqueness of this solution follow for instance from the fact that $f$ is continuous, increasing, $f(0{+})<0$ and $f(1)>0$.

\proof Recall the scaling properties of the It™ measure  $\underline{n}^{(\alpha)}$ from \cref{sec:absolutecontinuity}. 
Our main ingredient is a result of Bertoin \cite[Corollary 2]{Ber11}, which identifies the distribution of the maximal jump ${ \Delta^*}$ under the excursion measure $ \underline{n}^{(\alpha)}$
$$ \underline{n}^{(\alpha)}(  { \Delta^*}>x) = \beta/x, \qquad x>0.$$
Then to calculate $\Es{ \oD( X^{ \mathrm{exc}, (\alpha)})}$ it suffices to write 
$$ \begin{array}{rcl}
 \beta \Gamma(1-1/\alpha)
 &=& \displaystyle \underline{n}^{(\alpha)} ( \oD>1)  \Gamma(1-1/\alpha) \\
 &=& \displaystyle\int_0 ^ \infty \underline{n}^{(\alpha)}_ {(a)}(\oD>1) \frac{da}{\alpha a^{1/\alpha+1} }   \qquad \textrm {by property $(iii)$ in \cref{sec:excmin}}\\
&=&  \displaystyle\int_0 ^ \infty \underline{n}_ {(1)}^{(\alpha)} \left( \oD > \frac{1}{a^  {1/ \alpha}} \right)  \frac{da}{\alpha
a^{1/\alpha+1}}   \qquad    \textrm {by property $(ii)$ in \cref{sec:excmin}}\\
&=& \displaystyle\int_0 ^ \infty \underline{n}_ {(1)}^{(\alpha)} \left( \oD > u \right)  \frac{u^ {1+ \alpha}}{\alpha }  \cdot \frac{ \alpha du}{u ^ {1+ \alpha}} \qquad \textrm {by change of variables} \\
&=&   \displaystyle\int_0 ^ \infty  \underline{n}^{(\alpha)}_ {(1)} \left( \oD > u \right) du = \Es { \oD( X^{ \mathrm{exc}, (\alpha)})}  \qquad   \textrm {by definition}. \qedhere
\end{array} $$

Note that $\Es{\oD( X^{ \mathrm{exc}, (\alpha)})}$ converges towards $1$ as $ \alpha \downarrow 1$ and towards $0$ as $ \alpha \uparrow 2$. This is consistent with \cref{prop:cve,prop:cvsautexc}.

\begin{remark} \label{rem:jump1} Janson \cite[Formula (19.97)]{Jan12} gives the cumulative distribution function of $\oD( X^{ \mathrm{exc}, (\alpha)})$:
\begin{eqnarray*}
&& \Pr{\oD( X^{ \mathrm{exc}, (\alpha)}) \leq u} \\
&&   =\frac{| \Gamma(-1/ \alpha)|}{2\pi}  \int_{ - \infty}^{ + \infty}
\exp \left( \frac{1}{ \Gamma(- \alpha)}  \left( 
\int_0^u  x^{-\alpha-1} \left( e^{i t x}-1- i t x \right) \,\mathrm{d} x 
-\frac{u^{-\alpha}}{\alpha}-i t\frac{u^{1-\alpha}}{\alpha-1} \right) \right)\,\mathrm{d} t
\end{eqnarray*}
where $u \geq 0$. However, it seems difficult to calculate  $\Es{\oD( X^{ \mathrm{exc}, (\alpha)})}$ using this formula. Note also that if one manages to use this explicit expression to prove that $\oD( X^{ \mathrm{exc}, (\alpha)}) \to 1$ in probability as $ \alpha \downarrow 1$, this would simplify the proof of \cref{thm:1and2} $(i)$.
\end{remark}

\begin {lemma} \label{lem:technique}Let $p^{( \alpha)}_t$ be the density of the law of $X_t^{( \alpha)}$. There exist a constant $C>0$, which does depend on $ \alpha$, such that:
$$ \forall \alpha \in (3/2,2], \quad \forall x \in \R, \qquad p^{( \alpha)}_{1}(x) \leq C.$$
\end {lemma}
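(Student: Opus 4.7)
The strategy is Fourier inversion combined with the explicit form of the characteristic function of $X^{(\alpha)}_1$. From the Laplace identity $\Es{e^{-\lambda X_1^{(\alpha)}}}=e^{\lambda^\alpha}$ valid for $\lambda>0$, analytic continuation to $\lambda=-iu$ (with $u\in\R$) using the principal branch of the power function yields
$$\Es{e^{iuX_1^{(\alpha)}}}\;=\;\exp\!\left(|u|^\alpha e^{-i\,\mathrm{sgn}(u)\pi\alpha/2}\right),\qquad u\in\R.$$
In particular, the modulus equals $\exp(|u|^\alpha\cos(\pi\alpha/2))$. The key point is that on the range $\alpha\in(3/2,2]$ we have $\pi\alpha/2\in(3\pi/4,\pi]$, so $\cos(\pi\alpha/2)\le\cos(3\pi/4)=-\tfrac{\sqrt{2}}{2}$, giving the uniform Gaussian-type decay
$$\bigl|\Es{e^{iuX_1^{(\alpha)}}}\bigr|\;\le\;\exp\!\left(-\tfrac{\sqrt 2}{2}\,|u|^\alpha\right).$$

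Since this bound is integrable, the Fourier inversion formula applies, and gives pointwise
$$p^{(\alpha)}_1(x)\;=\;\frac{1}{2\pi}\int_{-\infty}^{\infty} e^{-iux}\,\Es{e^{iuX_1^{(\alpha)}}}\,du,$$
whence
$$p^{(\alpha)}_1(x)\;\le\;\frac{1}{\pi}\int_0^{\infty}\exp\!\left(-\tfrac{\sqrt{2}}{2}\,u^\alpha\right)du.$$
The change of variables $v=\tfrac{\sqrt 2}{2}u^\alpha$ turns the right-hand side into $\frac{1}{\pi\alpha}\bigl(\tfrac{\sqrt 2}{2}\bigr)^{-1/\alpha}\Gamma(1/\alpha)$. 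As $\alpha$ ranges over $(3/2,2]$, the exponent $1/\alpha$ stays in $[1/2,2/3]$, so both $\Gamma(1/\alpha)$ and $(\sqrt 2/2)^{-1/\alpha}$ are bounded by absolute constants, and $1/\alpha$ is bounded away from $0$. This yields the desired uniform bound $p^{(\alpha)}_1(x)\le C$.

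There is essentially no obstacle here; the whole point is the lower bound $\cos(\pi\alpha/2)\le -\tfrac{\sqrt 2}{2}$, which is exactly why the statement restricts $\alpha$ to $(3/2,2]$ (the constant $3/2$ could be replaced by any value larger than $1$, at the cost of a worse constant $C$, but not by $1$ itself, since then $\cos(\pi\alpha/2)\to 0$ and integrability fails). The only mild care needed is to justify the Fourier inversion formula; this follows at once from the integrability of the characteristic function just established, combined with the existence of a continuous density for $X_1^{(\alpha)}$ (which is standard for strictly stable laws with $\alpha>1$).
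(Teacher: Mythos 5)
Your proof is correct and follows essentially the same route as the paper: write down the characteristic function of $X_1^{(\alpha)}$, bound the density by Fourier inversion via $\frac{1}{2\pi}\int_{\R}|\phi^{(\alpha)}(t)|\,dt$, and observe that $|\cos(\pi\alpha/2)|\geq \tfrac{\sqrt 2}{2}$ uniformly for $\alpha\in(3/2,2]$, which makes the resulting expression $\frac{\Gamma(1/\alpha)}{\pi\alpha\,|\cos(\pi\alpha/2)|^{1/\alpha}}$ uniformly bounded. The only cosmetic difference is that you derive the characteristic function by analytic continuation of the Laplace exponent, whereas the paper simply cites Zolotarev's formula; this does not change the argument.
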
 

\begin{proof}The characteristic function $ \phi^{( \alpha)}$ of $X_1^{( \alpha)}$ is given by \cite[Theorem C.3.]{Zol86}$$ \phi^{( \alpha)}(t)= \Es { \exp \left(i t X_1^{( \alpha)} \right)}= \exp \left( -  \left| \cos \left( \frac{ \pi \alpha}{2} \right) \right| t^{ \alpha} - i \sin \left( \frac{ \pi \alpha}{2} \right) t^{ \alpha}\right), \qquad t \geq 0.$$
For $ x \in \R$, by the inversion formula $p^{( \alpha)}_{1}(x)= ( 2  \pi)^{-1} \int_{ \R} e^{-itx} \phi^{( \alpha)}(t) dt$, we get
$$ \left|p^{( \alpha)}_{1}(x) \right| \leq  \frac{1}{2 \pi} \int_{ \R} |\phi^{(\alpha)}(t)|dt= \frac{ \Gamma( 1/ \alpha)}{ \pi \alpha \left| \cos \left( \frac{ \pi \alpha}{2} \right) \right|^{1/ \alpha}}.$$
The conclusion immediately follows.
\end{proof}
 
\subsection{Limiting cases $\alpha \downarrow 1$ and  $\alpha \uparrow 2$}
\label{sec:limitcases}
In this section, we keep the notation  $X^{(\alpha)},X^{ \mathrm{br},(\alpha)},X^{\exc,(\alpha)}$  for respectively  the $ \alpha$-stable spectrally positive process, its bridge and its normalized excursion.

We prove \cref{thm:1and2} concerning the limiting behavior of $ \La$ as $  \alpha \downarrow 1$ and $ \alpha \uparrow 2$.  Since $ \La$ is coded by $X^{ \mathrm{exc}, (\alpha)}$, it should not be surprising that these results are consequences of \cref{prop:cve,prop:cvsautexc} which describe the limiting behavior of $X^{ \mathrm{exc}, (\alpha)}$ as $  \alpha \downarrow1$ and $ \alpha \uparrow 2$. We will see this is indeed the case when $ \alpha \rightarrow1$, but that some care is needed when $ \alpha \rightarrow 2$ because of the presence of an additional factor $ \frac{1}{2}$.

Before proving \cref{thm:1and2} we briefly recall the definition of the Gromov--Hausdorff topology. We refer to \cite{BBI01} for additional details.

\paragraph{The Gromov--Hausdorff topology.} If $(E,d)$ and $(E',d')$ are two  compact  metric spaces, the Gromov--Hausdorff distance between
${E}$ and ${E'}$  is 
 \begin{eqnarray*} \op{d_{GH}}({E},{E'}) &=& \inf\big\{\op{d}_{\op{H}}^F(\phi(E),\phi'(E'))\big\}, \end{eqnarray*} where the infimum is taken over all choices of the metric space $(F,\delta)$ and of the  isometric embeddings $\phi : E \to F$ and $\phi'  : E' \to F$ of $E$ and $E'$ into $F$ and $ \mathrm{d}_{ \mathrm{H}}^F$ is the Hausdorff distance between compacts sets in $F$. An alternative definition of this distance uses \textit{correspondences.} A correspondence between two  metric spaces $(E,d)$ and $(E',d')$ is a subset $\mathcal{R}$ of $E\times E'$ such that, for every $x_{1} \in E$, there exists at least one point $x_{2}\in E'$ such that $(x_{1},x_{2}) \in \mathcal{R}$ and conversely, for every $y_{2}\in E'$, there exists at least one point $y_{1}\in E$ such that $(y_{1},y_{2}) \in \mathcal{R}$. The distortion of the correspondence $\mathcal{R}$ is defined by 
 $$ \op{dis}(\mathcal{R}) = \sup_{}\big\{|d(x_{1},y_{1})-d'(x_{2},y_{2})| : (x_{1},x_{2}),(y_{1},y_{2}) \in \mathcal{R} \big\}.$$ The Gromov--Hausdorff distance can be expressed in terms of correspondences by the formula
 \begin{equation}
 \label{GHcorres}
 \op{d_{GH}}({E},{E'})= \frac{1}{2} \inf \big\{\hspace{-0.5mm}\op{dis}(\mathcal{R})\big\},
 \end{equation} where the infimum is over all correspondences $\mathcal{R}$ between ${E}$ and ${E'}$. 
The  Gromov--Hausdorff  distance is indeed a metric on the space  of all isometry classes of compact metric spaces, making it separable and complete. 

\medskip

\proof[Proof of \cref{thm:1and2}] {Recall the notation of \cref{sec:definition}.} Assertion $(i)$ in an immediate consequence of  \cref{prop:cvsautexc}. {Indeed, \cref{prop:cvsautexc} implies that as $ \alpha \downarrow 1$, the sequence of functions $ (s,t) \mapsto \delta_{{s \wedge t}}\big( {x_{s\wedge t }^ {s}}, {x_{s\wedge t }^ {t}}\big)$ converges  in probability towards the function $(s,t) \mapsto |s-t|$, uniformly on $[0,1]^2$, while the  sequences of functions $ (s,t) \mapsto {d}_{0}( s \wedge t,s)$ and ${d}_{0}( s \wedge t,t)$ converge in probability towards the constant function equal to $0$, uniformly on $[0,1]^2$. By \eqref{eq:def2}, this implies that $ (s,t) \mapsto \mathrm{d}^{(\alpha)}(s,t)$ converges  in probability towards $(s,t) \mapsto |s-t|$, uniformly on $[0,1]^2$, implying (i). We leave details to the reader.}

We now establish $(ii)$. Recall from \eqref{def:dh} the definition of the pseudo-distance $ \mathrm{d}_{h}$ for a function $h : [0,1] \to \mathbb{R}_{+}$. We will prove that we have the following convergence in distribution 
 \begin{eqnarray*}  \mathrm{d}^{(\alpha)}( \cdot, \cdot) &\xrightarrow[\alpha\to2]{(d)}& \frac{\sqrt{2}}{2} \cdot  \mathrm{d}_{ \mathbf{e}}(\cdot, \cdot)  \end{eqnarray*} for the uniform norm over $[0,1]^2$, which in turn will imply $(ii)$. We first check that the sequence of random pseudo-distances $ (\textrm{d}^{ (\alpha)})$ is tight as $\alpha \to 2$ for the uniform topology on $[0,1]^2$. Fix $ \epsilon >0$. By \cite[Theorem 7.3]{Bil99} (this reference covers the case of $[0,1]$ but the extension to $[0,1]^{2}$ is straightforward), it is sufficient to check that there exists $ \eta>0$ such that for $\alpha$ sufficiently close to $2$ we have
\begin{equation}
\label{eq:unif} \Pr { \sup_{|x-y| < \eta} \mathrm{d}^{ (\alpha)}(x,y)> \epsilon }< \epsilon.
\end{equation}

Note that by \cref{prop:cve}, the pseudo-distance
$\mathrm{d}_{ X^{\exc,(\alpha)}}( \cdot, \cdot)$ converges in distribution for the uniform norm on $[0,1]^2$ towards $\sqrt{2} \cdot \mathrm{d}_{ \mathbf{e}}( \cdot, \cdot)$ as $\alpha \uparrow 2$. It follows that there exists $ \eta>0$ such that for $ \alpha$ sufficiently close to $2$
\begin{equation}
\label{eq:unif2} \Pr { \sup_{|x-y| < \eta}  \mathrm{d}_{ X^{\exc,(\alpha)}}(x,y)> \epsilon }< \epsilon.
\end{equation}
But, by \cref{lem:controls} $(ii)$ for every $x,y\in [0,1]$ we have
$\mathrm{d}^{ (\alpha)}(x,y) \leq \mathrm{d}_{ X^{\exc,(\alpha)}}(x,y).$ Our claim \eqref{eq:unif} then follows from \eqref{eq:unif2}.

Since  $ (\textrm{d}^{ (\alpha)})$ is tight as $\alpha \to 2$ and since $\mathrm{d}_{ X^{\exc,(\alpha)}}( \cdot, \cdot)$ converges in distribution towards $\sqrt{2} \cdot \mathrm{d}_{ \mathbf{e}}( \cdot,\cdot)$, a density and continuity argument shows that in order to identify  the limit of any convergent subsequence of $ (\textrm{d}^{(\alpha)})$, by \cite[Theorem 7.3]{Bil99} (this reference covers the case of $[0,1]$ but the extension to $[0,1]^{2}$ is straightforward), it is sufficient to check that 
\begin{equation}
\label{eq:reroot}\frac{d^{(\alpha)}(U,V)}{  \mathrm{d}_{ X^{\exc,(\alpha)}}(U,V)} \quad \xrightarrow[\alpha \uparrow 2]{( \P)} \quad \frac{1}{2}.
\end{equation}
where $U,V$ are independent random uniform variables on $[0,1]$.   We claim that it suffices to prove {that}
 \begin{eqnarray} \frac{d^{(\alpha)}(0,U)}{X^{\exc,(\alpha)}(U)} & \xrightarrow[\alpha\uparrow2]{( \P)}& \frac{1}{2}.  \label{eq:ingredient}\end{eqnarray}
Indeed, the reader may either strengthen the following proof by splitting at the most common ancestor $U \wedge V$, or invoke a re-rooting property of $X^{ \exc, (\alpha)}$ at a uniform location which {gives} $$\Big (\mathrm{d}^{(\alpha)}(U,V), \mathrm{d}_{X^{\exc,(\alpha)}}(U,V)\Big) \quad  \overset{(d)}{=}\quad \Big( \mathrm{d}^{(\alpha)}(0,U), \mathrm{d}_{X^{\exc,(\alpha)}}(0,U)\Big),$$ see \cref{rem:reroot}. 
We now establish \eqref{eq:ingredient}. For a cˆdlˆg function  $Y \in \mathbb {D}([0,1]), \R)$ recall the notation $x_{s}^t(Y) ,u_{s}^t(Y)$ from \cref{sec:descents} and for $0 \leq  \eta \leq t \leq 1$ set $$ \textsf{Q}_{ \eta}^{t}(Y)= \frac{  \displaystyle \sum_{  \eta \leq s, \  s \preccurlyeq t} \Delta Y_s\min\big( u_{s}^t( Y),1-u_{s}^t(Y)\big)}{ \displaystyle \sum_{ \eta \leq s, \  s \preccurlyeq t} \Delta Y_s u_{s}^t( Y)}.$$
By \eqref{eq:u} and \eqref{eq:u2}, we have:
$$ \frac{d^{(\alpha)}(0,U) }{X^{\exc,(\alpha)}(U)} \quad=	 \quad{\textsf{Q}_{0}^{U}(X^{\exc,(\alpha)})}.$$
By using the Vervaat transformation (recall \cref{sec:absolutecontinuity}), we get that \begin{equation}
\frac{d^{(\alpha)}(0,U) }{X^{\exc,(\alpha)}(U)} \quad
\displaystyle \mathop{=}^{(d)} \quad{\textsf{Q}_{0}^{1}(X^{ \br,(\alpha)})} \label{eq:quotientdegueu}.
\end{equation}
 It is thus sufficient to show that the last quantity   converges in probability to $ {1}/{2}$ as $ \alpha \uparrow 2$. As usual, we  replace  the bridge $ X^{\br,(\alpha)}$ by the  $\alpha$-stable process $ X^{(\alpha)}$ and first prove that
 \begin{eqnarray} \label{eq:pourprocessus}{\textsf{Q}_{0}^{1}(X^{ (\alpha)})}  & \xrightarrow[\alpha \uparrow 2]{( \P)} & \displaystyle \frac{1}{2}.  \end{eqnarray}
To this end, note that by \cref{prop:itomeasure}, the collection $\{u_{s}^1( X^{(\alpha)}) : s \in [0,1], s \preccurlyeq 1\}$  is an i.i.d.\,collection of uniform variables also independent of $ \{\Delta X^{(\alpha)}_{s}, :  0 \leq s,  s \preccurlyeq 1\}$.  By \cref{lem:jumps} we have $$ \sum_{0 \leq s, \  s \preccurlyeq 1} \Delta X^{(\alpha)} _{s} \geq \sum_{  0 \leq s,  s \preccurlyeq 1} x_s^1(X^{ (\alpha)})= X_{1}^{( \alpha)}- \inf_{[0,1]} X^{( \alpha)} \quad \xrightarrow[\alpha \uparrow2]{(d)} \quad \sqrt{2} \cdot ( B_{1} - \inf_{[0,1]} B).$$
On the other hand, we have  for $  \varepsilon >0$
 \begin{eqnarray*} 
 \Pr{\sup_{s \in [0,1]} \Delta X^{(\alpha)}_{s} \geq   \varepsilon} \quad  = \quad  1-\exp \left( - \Pi^{(\alpha)}([ \varepsilon, \infty)) \right)  \end{eqnarray*}
 which converges to $0$ as $ \alpha \uparrow 2$ {by}  \eqref{eq:levymeasure}. {Setting $ \mathcal{S}=   \{\Delta_{s}(X^{(\alpha)}) ; \,  0 \leq s, s \preccurlyeq 1\}$, it follows that $ \sup \mathcal{S}$  converges in probability towards $0$ as  $\alpha \uparrow2$, and the sum of all the elements of $ \mathcal{S}$ converges in probability towards a positive random variable as $\alpha \uparrow2$.} 
 We are thus in position to apply a classic weak law of large numbers (for example by using an $L^2$ estimate) and get {the following two convergences}:
 \begin{eqnarray*} \frac{ \displaystyle\sum_{ 0 \leq s, \ s \preccurlyeq 1} \Delta  X^{(\alpha)} _{s}  \cdot u_{s}^1( X^{(\alpha)})}{\displaystyle\sum_{ 0 \leq s, \ s \preccurlyeq 1} \Delta  X^{(\alpha)} _{s}}   & \xrightarrow[\alpha\uparrow2]{( \mathbb{P})}&  \Es{U}=1/2, \\ \frac{\displaystyle\sum_{ 0 \leq s, \ s \preccurlyeq 1} \Delta  X^{(\alpha)} _{s} \min\big( u_{s}^t( X^{(\alpha)}),1-u_{s}^t( X^{(\alpha)})\big) }{\displaystyle\sum_{ 0 \leq s, \ s \preccurlyeq 1} \Delta  X^{(\alpha)} _{s}}  & \xrightarrow[\alpha\uparrow2]{( \mathbb{P})}&\Es { \min(U,1-U)}=1/4.  \end{eqnarray*}
This proves \eqref{eq:pourprocessus}.

We now complete the proof of \eqref{eq:ingredient} by 
showing that
\begin{eqnarray} \label{eq:pourbridge}{\textsf{Q}_{0}^{1}(X^{ \br, (\alpha)})}  & \xrightarrow[\alpha \uparrow 2]{( \P)} & \displaystyle \frac{1}{2}  \end{eqnarray}
by using an absolute continuity argument. For a cˆdlˆg function  $Y \in \mathbb {D}([0,1], \R)$,  set $u_{ \star}(Y)=\inf\{ t \in [0,1]; \min(Y(t-), Y(t))= \inf_{[0,1]} Y\}$. 
 Fix $ \epsilon >0$. We claim that there exists $ \eta \in (0,1)$ such that for every $\alpha \in (1,2)$ sufficiently close to $2$ we have
$$\Pr {\textsf{Q}^1_{ 0}(X^{\br,(\alpha)}) \neq \textsf{Q}^1_{ \eta}(X^{\br,(\alpha)})} \leq  \epsilon $$Indeed, notice first that $ \textsf{Q}^1_{ 0}(Y)=\textsf{Q}^1_{ u_{ \star}(Y)}(Y)$ and second that $u_{ \star}(X^{\br,(\alpha)})$ is uniformly distributed on $[0,1]$ (
see \cite[VIII, Exercise 6]{Ber96}).
Next, by absolute continuity (see \cref{sec:normexc}) applied to the dual process $t \mapsto X_{1}-X_{(1-t)-}$,
$$
 \Pr { \left|\textsf{Q}^1_{ \eta}(X^{\br,(\alpha)})-1/2 \right| > \delta} = \Es { \mathbbm {1}_{ \left|\textsf{Q}^1_{ \eta}({X}^{(\alpha)})-1/2 \right| > \delta} \frac{p^{( \alpha_n)}_{\eta}(X^{(\alpha)}_{ \eta} )}{ p^{( \alpha)}_{1}(0)}}.$$
Since the densities $p^{( \alpha)}_t$ enjoy the scaling relation $p^{( \alpha)}_t(x)=t^{-1/ \alpha} p^{( \alpha)}_1(x t^{-1/ \alpha})$ by \cref{lem:technique}, it follows that there exists a constant  $C>0$ (depending on $\eta$) such that, for every $\alpha \in (\frac{3}{2},2)$,
$$ \Pr { \left|\textsf{Q}^1_{ \eta}(X^{\br,(\alpha)})-1/2 \right| > \delta}  \leq  C \Pr { \left|\textsf{Q}^1_{ \eta}({X}^{(\alpha)})-1/2 \right| > \delta}.$$ Thus, putting the pieces together, for every $\alpha$ sufficiently close to $2$ we have
$$\Pr { \left|\textsf{Q}^1_{ 0}(X^{\br,(\alpha)})-1/2 \right| > \delta} \leq   \Pr {\textsf{Q}^1_{ 0}(X^{\br,(\alpha)}) \neq \textsf{Q}^1_{ \eta}(X^{\br,(\alpha)})} +C \Pr { \left|\textsf{Q}^1_{ \eta}({X}^{(\alpha)})-1/2 \right| > \delta}.$$
A minor adaptation of \eqref{eq:pourprocessus} shows that
$\textsf{Q}^1_{ \eta}(X^{\br,(\alpha)})$ converges in probability to $\frac{1}{2}$ as $\alpha \uparrow 2$. This completes the proof of \cref{thm:1and2} $(ii)$.\endproof

\subsection{Hausdorff dimension of looptrees}

In this section, we study fractal properties of looptrees, and prove in particular \cref{thm:dimension} which identifies the Hausdorff dimension of $ \La$ (see  \cite[Sec.~4]{Mat95} for the definition and background on Hausdorff dimension). {Recall the definition of $ \mathscr{L}_{\alpha}$ using $\X$ in \cref{sec:definition}. In this section, the dependence of $ \X$ in $\alpha$ is implicit.}

\subsubsection{Upper bound}
\proof We construct a covering of $ \mathscr{L}_{\alpha}$ as follows. Fix $ \varepsilon>0$ and let $(t_{i}^{( \varepsilon)})_{1 \leq i \leq N _{ \varepsilon}}$ be an increasing enumeration of the elements of the finite set  $\{t \in [0,1]; \, \Delta_{t}> \varepsilon^{ 1/\alpha} \}$ and set $ t_{0}^{( \varepsilon)}=0$ and $ t_{N_{ \varepsilon}+1}^{( \varepsilon)}=1$. Recall that $\textbf{p}: [0,1] \rightarrow \La$ is the canonical projection. It is clear that
$$ \bigcup_{i=0}^{N_{ \varepsilon}} \mathbf{p}([t_{i}^{( \varepsilon)},t_{i+1}^{( \varepsilon)}))$$
is a covering of $ { \La}$. By \cref{lem:controls} $(ii)$, we have   \begin{eqnarray} \mathsf {Diam}\Big(\mathbf{p}\big([t_{i}^{( \varepsilon)},t_{i+1}^{( \varepsilon)})\big)\Big) &\leq&  {2 \cdot } \mathsf{Amp}_{[t_{i}^{( \varepsilon)},t_{i+1}^{( \varepsilon)})} X^{ \mathrm{exc}},   \label{eq:diamdiam} \end{eqnarray} where by definition
$$ \mathsf{Diam}(A) := \sup\{ d(u,v) : u,v \in A\} \qquad \textrm{and} \qquad \mathsf{Amp}_{[s,t]}f := \sup\{ |f(x)-f(y)| : x,y \in [s,t]\}.$$
  We shall now prove that, for every $\eta \in (0,1/ \alpha)$,
\begin{equation}
\label{eq:vers1} \lim_{\varepsilon \to 0 } \Pr{N_{ \varepsilon} \leq  \varepsilon^{-1-\eta} \quad  \mbox{ and } \quad \mathsf{Amp}_{[t_{i}^{( \varepsilon)},t_{i+1}^{( \varepsilon)})} X^{ \mathrm{exc}} \leq \varepsilon^{1 / \alpha - \eta}, \quad \forall i \leq N_{ \varepsilon}} \quad = \quad 1.
\end{equation} This will entail that a.s.\,$ \dim_{H}( \La) <  \alpha(1+ \eta)/(1- \eta \alpha)$, implying the a.s.\,upper bound  $ \dim_{H}\left(\mathscr{L}_\alpha\right) \leq  \alpha$  since $\eta \in (0, 1/ \alpha)$ was arbitrary. 

Instead of proving \eqref{eq:vers1} directly, we will {first} prove a similar statement involving the unconditioned process $X$. 
 Let $(t_{i}^{(\varepsilon),*})_{i \geq 1}$ be an increasing enumeration of the times where $X$ makes a jump  larger than $ \varepsilon^{1/\alpha}$ (with the convention $t_{0}^{( \varepsilon),*}=0$), and set $N_{\varepsilon}^* = \#\{ i \geq 1 : t_{i}^{( \varepsilon),*} \leq 1 \}$.  By standard arguments involving continuity relations between $X$  and the LŽvy bridge $ X^ \br$ as well as the Vervaat transformation between $ X^ \br$ and $X^{ \mathrm{exc}}$ (see \cref{sec:absolutecontinuity}),  \eqref{eq:vers1} holds if we manage to prove that
\begin{equation}
\label{eq:vers1bis} \lim_{\varepsilon \to 0 } \Pr{N^*_{ \varepsilon} \leq  \varepsilon^{-1-\eta} \quad  \mbox{ and } \quad \mathsf{Amp}_{[t_{i}^{( \varepsilon),*},t_{i+1}^{( \varepsilon),*})} X \leq \varepsilon^{1 / \alpha - \eta}, \quad \forall i \leq N^*_{ \varepsilon}} \quad = \quad 1.
\end{equation}
The advantage of dealing with the unconditioned process is that now $ N_{ \varepsilon}^*$ is distributed according to a Poisson random variable of parameter $ \Pi( \varepsilon^{ 1/\alpha}, \infty)$, that is, using \eqref{eq:levymeasure},   \begin{eqnarray} \label{eq:neps} N^*_{\varepsilon}  &  \overset{(d)}{=}&  \mathsf{Poisson}\left(   \frac{1}{\varepsilon} \cdot \frac{\alpha-1}{ \Gamma(\alpha-2)} \right) .  \end{eqnarray} 
Furthermore,  by the Markov property of the process $X$, the {random} variables $$ \mathsf{Amp}_{[t_{i}^{(\varepsilon),*},t_{i+1}^{(\varepsilon),*})} X, \qquad {i \geq 0}$$ are independent and identically distributed. By the scaling {property of $X$}, their common distribution can be written as $\varepsilon^{1/\alpha} \cdot  \mathcal{A}$, where $$ \mathcal{A} := \mathsf{Amp}_{[0,  \mathcal{E})} \tilde{X}, $$
where $ \tilde{X}$ is the LŽvy process $X$ conditioned not to make jumps larger than $1$, that is with LŽvy measure given by $ \Pi(dx) \mathbf{1}_{(0,1)}(x)$, and $\mathcal{E}$ is an independent  exponential variable of parameter $({\alpha-1})/{ \Gamma(\alpha-2)}$.

We claim that $\Es{\exp(\lambda \mathcal{A})} < \infty$ for {a certain} $ \lambda>0$. To this end, it is sufficient to check that for {a certain} $\lambda>0$ we have both $$ \Es{ \exp \left(-\lambda  \cdot \inf_{[0,\mathcal{E}]} \tilde{X} \right)} < \infty \qquad \textrm{and} \qquad \Es{ \exp \left(\lambda \cdot\sup_{[0,\mathcal{E}]} \tilde{X} \right)}< \infty.$$
The first inequality is a consequence of the discussion of \cite[p. 188]{Ber96}  applied to the spectrally negative process $-X$. For the second one, we slightly adapt these arguments: 
Since $\Delta \tilde{X}_{s}<1$ for every $s \geq 0$, by the Markov property applied at $T_{[1, \infty]} = \inf\{ t >0 : \tilde{X}_{t} \geq  1\}$ and by lack of memory of the exponential law, we have  
$$ \Pr{ \sup_{[0,\mathcal{E}]} \tilde{X} > a + 2} \leq \Pr{\sup_{[0,\mathcal{E}]} \tilde{X} > 1}\Pr{\sup_{[0,\mathcal{E}]} \tilde{X} > a}, $$ which yields $ \P( \sup_{[0,\mathcal{E}]} \tilde{X} > 2n) \leq \P(\sup_{[0,\mathcal{E}]} \tilde{X} > 1)^n$ for every $n \geq 1$. It follows that $\Es{\exp(\lambda \mathcal{A})} < \infty$ for every $ 0<\lambda<\frac{1}{2}\log \P(\sup_{[0,\mathcal{E}]} \tilde{X} > 1)$. 
To establish \eqref{eq:vers1bis}, write
 \begin{eqnarray*} && \Pr{N^*_{ \varepsilon} \geq  \varepsilon^{-1-\eta} \ \   \mbox{ or } \ \ \exists i \leq N^*_{ \varepsilon} \mbox{ s.t. } \mathsf{Amp}_{[t_{i}^{( \varepsilon),*},t_{i+1}^{( \varepsilon),*})} X \geq \varepsilon^{1 / \alpha - \eta}} \\
 && \qquad \qquad \qquad \qquad \qquad\qquad \qquad \qquad\qquad \qquad \qquad\leq \Pr{N^*_{ \varepsilon} \geq  \varepsilon^{-1-\eta}} 
  +  \varepsilon^{-1-\eta} \Pr{ \mathcal{A} \geq \varepsilon^{-\eta}}.  \end{eqnarray*}
Since $ \mathcal{A}$ has exponential moments and by \eqref{eq:neps}, the right-hand side of the last display vanishes as $ \varepsilon \to 0$. This implies \eqref{eq:vers1bis} and completes the proof of the upper bound.  \qedhere

\subsubsection{Lower bound} 

\proof Denote by $\nu$ the probability measure on $ \mathscr{L}_{\alpha}$ obtained as the push-forward of the Lebesgue measure on $[0,1]$ by the projection $ \mathbf{p}$. We will show that for every $ \delta \in (0, \alpha)$, almost surely, for $\nu$-almost every $u$ we have \begin{eqnarray} \label{eq:goalinf} \limsup_{r \to 0} \frac{ \nu( B_{r}(u))}{ r^{\alpha- \delta}} &=&0, \end{eqnarray}
where $B_{r}(u)$ is the ball of center $u$ and radius $r>0$ in the metric space $\mathscr{L}_{\alpha}$. By standard density theorems for Hausdorff measures \cite[Theorem 8.8]{Mat95} (this reference covers the case of measures on $ \R^n$, but the proof remains valid here), this implies that $ \dim_{H}( \La) \geq  \alpha-\delta$, almost surely. The lower bound will thus follow. 

Fix $ \delta  \in (0, \alpha)$. Let $U$ be a uniform variable over $[0,1]$ independent of $ \La$. We shall prove that almost surely, for every $r>0$ sufficiently small we have $\nu(B_{r}(\mathbf{p}(U))) \leq 2 r^{\alpha- \delta}$. By Fubini's theorem, this indeed implies \eqref{eq:goalinf}. We will use the following lemma:
\begin{lemma}\label{lem:borninf} Fix $\eta >0$. Almost surely, as $\varepsilon \to 0$, there exists a jump time $T_{ \varepsilon}$ of $ \X$ such that  the following three conditions hold:
\begin{enumerate}[$(i)$]
\item $T_{\varepsilon} \in (U- \varepsilon, U)$,
\item $ \min(x_{T_{ \varepsilon}}^{U},\Delta_{T_{\varepsilon}}- x_{T_{ \varepsilon}}^{U}) > \varepsilon^{1/\alpha+\eta}$,
\item $\inf_{[U, U+ \varepsilon^{1- \eta}]} X^ \mathrm{exc} < X^ \mathrm{exc}_{T_{ \varepsilon}-}$.\end{enumerate}
\end{lemma}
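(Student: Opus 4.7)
The plan is to reduce the statement about the normalized excursion $X^{\mathrm{exc}}$ at a uniform point $U$ to an analogous statement about the unconditioned two-sided $\alpha$-stable L\'evy process $(X_t : t \in \mathbb{R})$ at the fixed point $0$. Since $U$ is uniform and independent of $X^{\mathrm{exc}}$, the Vervaat transformation of \cref{sec:absolutecontinuity} combined with the cyclic-shift re-rooting of the bridge $X^{\mathrm{br}}$ identifies the local behavior of $X^{\mathrm{exc}}$ near $U$ with that of $X^{\mathrm{br}}$ near a uniform point. Then the absolute continuity between $X^{\mathrm{br}}$ and $X$ on sub-intervals bounded away from $0$ and $1$ allows one to replace $X^{\mathrm{br}}$ by $X$, at the cost of a bounded Radon--Nikodym density (using \cref{lem:technique} for the uniform bound on stable densities). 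After this reduction, it suffices to prove the three conditions for $X$ at time $0$, where the past $(X_t)_{t \leq 0}$ and the future $(X_{0+t}-X_0)_{t \geq 0}$ may be taken independent.

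Conditions (i) and (ii) are essentially the content of \cref{cor:technique}, save for one additional refinement: I need to additionally impose an upper bound $\Delta X_{T_\varepsilon} \leq \varepsilon^{(1-\eta)/\alpha}$. Adapting the proof of \cref{cor:technique}, this amounts to adding the extra indicator $\mathbbm{1}_{\{x \leq \varepsilon^{(1-\eta)/\alpha}\}}$ in the intensity integral against the Poisson point measure \eqref{eq:poisson}. A direct computation shows that the intensity of such refined jumps up to local time $c_\varepsilon \varepsilon^{\overline{\alpha}}$ still diverges like a negative power of $\varepsilon$ (the additional upper cutoff removes only a vanishing fraction of the mass), so such a $T_\varepsilon$ exists with probability at least $1 - \mathcal{O}(\varepsilon^\gamma)$ for some $\gamma > 0$.

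Condition (iii) is then handled using the independence of past and future. On the refined event one has $X_0 - X_{T_\varepsilon -} \leq \Delta X_{T_\varepsilon} + \mathsf{Amp}_{[T_\varepsilon, 0]} X$; both summands are $o(\varepsilon^{(1-\eta)/\alpha})$ with high probability, since $\Delta X_{T_\varepsilon} \leq \varepsilon^{(1-\eta)/\alpha}$ by construction and the amplitude of $X$ over an interval of length $\leq \varepsilon$ is of order $\varepsilon^{1/\alpha} \ll \varepsilon^{(1-\eta)/\alpha}$ by the scaling property. On the other hand, by independence and scaling, $X_0 - \inf_{[0, \varepsilon^{1-\eta}]} X$ has the law of $\varepsilon^{(1-\eta)/\alpha} \cdot \bigl(-\inf_{[0,1]} X\bigr)$, which is strictly positive almost surely and exceeds any fixed multiple of $\varepsilon^{(1-\eta)/\alpha}$ with probability bounded away from zero. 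Combining these gives (iii) with high probability.

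Putting everything together, the three conditions hold simultaneously with probability at least $1 - C \varepsilon^\gamma$. Borel--Cantelli along $\varepsilon_n = 2^{-n}$ upgrades this to an almost-sure statement for every $\varepsilon_n$ sufficiently small, and a monotonicity argument (after slightly adjusting the constants and exponents $\eta$) extends the conclusion to arbitrary $\varepsilon \to 0$. The main obstacle is precisely the tension between (ii), which forces $\Delta X_{T_\varepsilon}$ to be not too small, and (iii), which effectively forces $\Delta X_{T_\varepsilon}$ to be not too large; the refined Poisson intensity computation is what makes the two constraints simultaneously satisfiable.
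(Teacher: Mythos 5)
Your overall reduction to the two-sided stable process at the fixed time $0$ (via the Vervaat transform and absolute continuity, with \cref{lem:technique} controlling the density), and your treatment of (i)--(ii) via \cref{cor:technique} plus Borel--Cantelli along $\varepsilon_n=2^{-n}$ and an exponent-adjustment to cover all $\varepsilon$, follow the paper's route. The genuine gap is in (iii). You bound $X_0-X_{T_\varepsilon-}\leq \Delta X_{T_\varepsilon}+\mathsf{Amp}_{[T_\varepsilon,0]}X\leq \varepsilon^{(1-\eta)/\alpha}(1+o(1))$, while the depth of the future window is $X_0-\inf_{[0,\varepsilon^{1-\eta}]}X\stackrel{(d)}{=}\varepsilon^{(1-\eta)/\alpha}\cdot\bigl(-\inf_{[0,1]}X\bigr)$. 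These are of the \emph{same} order in $\varepsilon$, so your comparison requires $-\inf_{[0,1]}X>1+o(1)$, an event whose probability is bounded away from $0$ \emph{and} from $1$ --- exactly what you concede when you write ``with probability bounded away from zero.'' Consequently the asserted bound $1-C\varepsilon^{\gamma}$ for the three conditions simultaneously does not follow from your estimates, and the Borel--Cantelli step collapses: at each scale you only get the conditions with probability bounded below by a constant, which yields neither ``for all small $\varepsilon$'' nor even ``infinitely often'' almost surely.

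For comparison, the paper needs no upper bound on $\Delta X_{T_\varepsilon}$ at all: since $T_\varepsilon\in(-\varepsilon,0)$, one automatically has $X_{T_\varepsilon-}\geq \inf_{[-\varepsilon,0]}X$, so (iii) reduces to the inequality $\inf_{[0,\varepsilon^{1-\eta}]}X<\inf_{[-\varepsilon,0]}X$ between the two window infima. This is settled by almost-sure envelope theorems for stable processes \cite[Chapter VIII, Theorems 5 and 6]{Ber96}: a.s.\ for all small $\varepsilon$, the future window goes at least $c\,\varepsilon^{(1-\eta/2)/\alpha}$ below $0$ while the past window stays within $C\,\varepsilon^{(1-\eta/3)/\alpha}$ of $0$; the mismatch of exponents gives (iii) almost surely for every small $\varepsilon$, with no probability estimate and no Borel--Cantelli needed for this part. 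If you want to keep your strategy, it can be repaired: take the upper cutoff of strictly smaller order, e.g.\ $\Delta X_{T_\varepsilon}\leq\varepsilon^{(1-\eta/2)/\alpha}=o\bigl(\varepsilon^{(1-\eta)/\alpha}\bigr)$ (your refined Poisson intensity computation still diverges), and replace ``strictly positive a.s.'' by a quantitative small-deviation bound such as $\mathbb{P}\bigl(-\inf_{[0,1]}X\leq u\bigr)=O(u)$ (via the first-passage subordinator), so that (iii) fails with probability $O(\varepsilon^{\gamma'})$; only with such a polynomial failure rate does your Borel--Cantelli argument go through.
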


\begin{figure}[!h]
 \begin{center}
 \includegraphics[width=10cm]{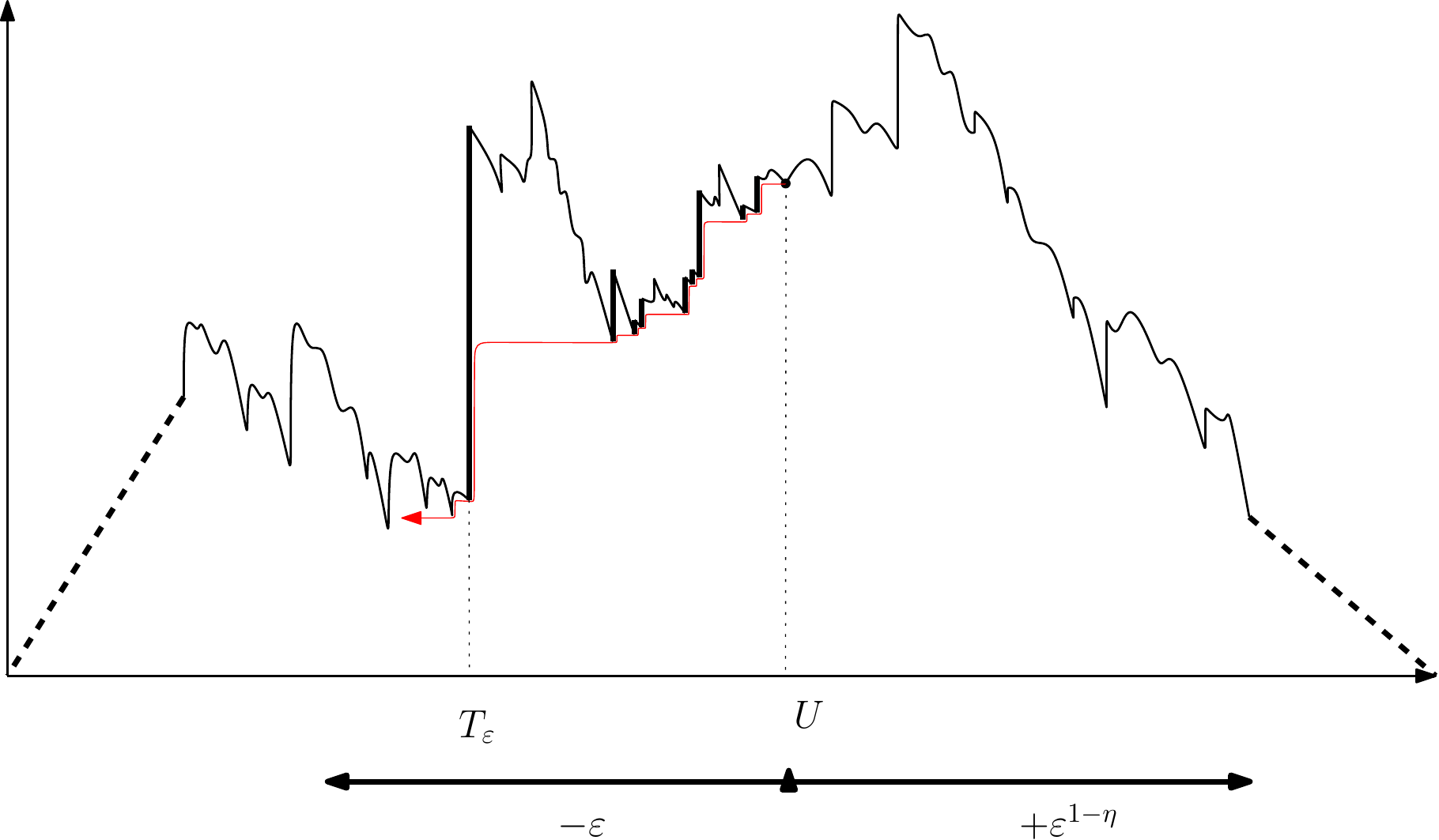}
 \caption{Setup of \cref{lem:borninf}. The red line shows the ancestral path of $U$ towards $0$ and the loops encountered during this descent.  }
 \end{center}
 \end{figure}
Assuming $(i), (ii)$ and $(iii)$,  let us show that   \begin{eqnarray*}\nu\Big(B_{ \varepsilon^{1/\alpha + \eta}}\big(\mathbf{p}(U)\big)\Big) &\leq&  2\varepsilon^{1- \eta}  \end{eqnarray*} which, together with the statement of the lemma, will imply our goal. {Indeed}, it is sufficient to check that whenever $s n [U- \varepsilon, U+ \varepsilon^{1- \eta}]$ then we have $d(s,U) \geq \varepsilon^{1/\alpha +\eta}$. {To this end, note that} if $s n [U- \varepsilon, U+ \varepsilon^{1- \eta}]$ then $(iii)$ and $(i)$ show that $s \wedge U < T_{ \varepsilon}$ and hence $ s \wedge U \prec T_{ \varepsilon} \prec U$.  By the definition of $d$ and \cref{lem:controls} $(i)$ we get 
$$ d(s,U) \geq \min ( x_{T_{ \varepsilon}^{U}} , \Delta_{T_{ \varepsilon}}-x_{T_{ \varepsilon}^{U}}) \geq \varepsilon ^{1/\alpha + \eta},$$ 
as desired.

It thus remains to show \cref{lem:borninf}. Since the statement we intend to prove is a local statement around the point $U$ in $ X^{ \mathrm{exc}}$,  by standard arguments involving continuity relations between $X$  and the LŽvy bridge $ X^ \br$ as well as the Vervaat transformation between $ X^ \br$ and $X^{ \mathrm{exc}}$ (see \cref{sec:absolutecontinuity}) it suffices to prove \cref{lem:borninf} when $X^ \mathrm{exc}$ is replaced by {a two-sided LŽvy process $(X_{t})_{t \in \R}$} and the point $U$ by  the point $0$. Recall from the statement of \cref{cor:technique} the definition of the event
 $$A_{ \varepsilon} = \left\{ \exists s \in [-\varepsilon,0] \mbox{ with } s \preccurlyeq 0 :  x_{s}^{0}(X) \geq  \varepsilon^{1/\alpha +\eta}\ \  \mbox{ and } \ \ \Delta X_{s} - x_{s}^{0}(X) \geq \varepsilon^{1/\alpha +\eta}\right\}.$$
By \cref{cor:technique}, there exist $C, \gamma>0$ such that $ \Pr{A_{ \epsilon}^c}< C \epsilon^{ \gamma}$. Borel--Cantelli's Lemma implies that a.s.\, $A_{2^{-k}}$ holds for every $k$ sufficiently large. This proves $(i)$ and $(ii)$ (with a slightly larger $\eta$). Next, by \cite[Chapter VIII, Theorem 6 (i)]{Ber96}, a.s.\,there exists $c>0$ such that for every $ \epsilon$ sufficiently small $\sup_{[0, \varepsilon^{1- \eta}]} (-X) \geq	 c \epsilon^{(1- \eta/2)/ \alpha}$, and by the last line of the proof of  Theorem 5 in \cite[Chapter VIII]{Ber96},  a.s.\,there exists $C>0$ such that for every $ \epsilon$ sufficiently small, $ \sup_{[0, \varepsilon]}(-X) \leq C \epsilon^{(1- \eta/3)/ \alpha}$. It follows that a.s.\,for every $ \epsilon$ sufficiently small we have $$\inf_{[0, \varepsilon^{1- \eta}]} X < \inf_{[- \varepsilon,0]}X.$$
Combined with $(i)$, this implies $(iii)$ and completes the proof.\endproof

 \section{Invariance principles for discrete looptrees}
 
\subsection{Plane trees and Lukasiewicz path} \label{sec:planetrees}
We briefly recall the
formalism of plane trees, which can for instance be found in \cite{Nev86,LG05}. Let $\N=\{0,1,\ldots\}$ be the set of nonnegative integers,
$\N^*=\{1,\ldots\}$ and let $ \mathcal{U}$ be the set of labels
$$ \mathcal{U}=\bigcup_{n=0}^{\infty} (\N^*)^n,$$
where by convention $(\N^*)^0=\{\varnothing\}$. An element of $ \mathcal{U}$ is
a sequence $u=u_1 \cdots u_m$ of positive integers, and we set
$|u|=m$, which represents the ``generation'' or heightof $u$. If $u=u_1
\cdots u_m$ and $v=v_1 \cdots v_n$ belong to $ \mathcal{U}$, we write $uv=u_1
\cdots u_m v_1 \cdots v_n$ for the concatenation of $u$ and $v$. Finally, a \emph{plane tree} $\tau$ is a finite  subset of $
\mathcal{U}$ such that:
\begin{itemize}
\item[1.] $\varnothing \in \tau$,
\item[2.] if $v \in \tau$ and $v=uj$ for some $j \in \N^*$, then $u
\in \tau$,
\item[3.] for every $u \in \tau$, there exists an integer $k_u(\tau)
\geq 0$ (the number of children of $u$) such that, for every $j \in \N^*$, $uj \in \tau$ if and only
if $1 \leq j \leq k_u(\tau)$.
\end{itemize}
In the following, by \emph{tree} we will always mean plane tree. We denote the set of all trees by $\T$. 
We will often view each vertex of a tree $\tau$ as an individual of
a population whose $\tau$ is the genealogical tree. If $u, v \in \tau$ we denote by $\llbracket u,v \rrbracket$ the discrete geodesic path between $u$ and $b$ in $\tau$.
The total
progeny of $\tau$, which is the total number of vertices of $\tau$,
will be
denoted by $|\tau|$. The number of leaves (vertices $u$ of $ \tau$ such that $k_u( \tau)=0$) of the tree $\tau$ is denoted by $\lambda( \tau)$ and the height of the tree (which is the maximal generation) is denoted by $ \mathsf{H}( \tau_{n})$.  

\bigskip

We now recall the classical coding of plane trees by the so-called
Lukasiewicz path. This coding is crucial in the understanding of
scaling limits of discrete looptrees associated with large trees. Let $\tau$ be a plane tree whose vertices are listed in lexicographical order $\varnothing=u(0)<u(1)<\cdots<u(|\tau|-1)$.

\begin{figure}[h!]
\begin{center}
\includegraphics[width=12cm]{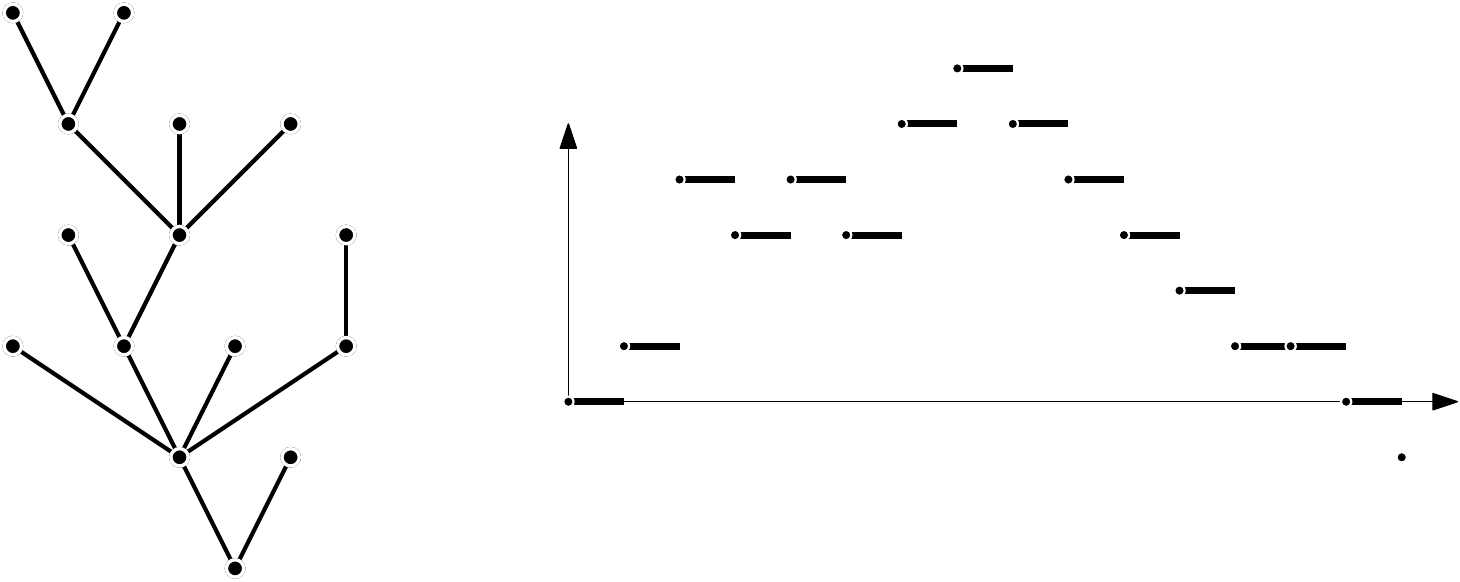}
   \caption{\label{fig:luka} A tree and its Lukasiewicz path.}
\end{center}
\end{figure}
The Lukasiewicz path $ \W(\tau)=(\W_n(\tau), 0 \leq n \leq |\tau|)$ is defined by
$\W_0(\tau)=0$ and $\W_{n+1}(\tau)=\W_{n}(\tau)+k_{u(n)}(\tau)-1$ for $0 \leq n \leq |\tau|-1$  (see \cref{fig:luka} for an example, where $W$ is interpolated into a cˆdlˆg function between successive integers). It is easy to see that $\W_n ( \tau) \geq 0$ for $0 \leq n < |\tau|$ but $\W_{|\tau|}(\tau)=-1$ (see e.g. \cite[Proposition 1.1]{LG05}).

\subsection{Invariance principles for discrete looptrees}

Recall from the Introduction that a discrete looptree  $ \mathsf{Loop}( \tau)$ is associated with every plane tree $ \tau \neq \varnothing$ (see \cref{fig:loop}). In this section, we give a sufficient condition on a sequence of trees $( \tau_{n})_{n \geq 1}$ that ensures that the associated looptrees $ ( \mathsf{Loop}( \tau_{n}))_{n \geq 1}$, appropriatly rescaled, converge towards the stable looptree $ \La$.
  
\begin{theorem}[Invariance principle] \label{thm:continuity} Let $(\tau_{n})_{n \geq 1}$ be a sequence of random trees such that there exists a sequence $(B_{n})_{n\geq0}$ of positive real numbers  
satisfying 
$$ (i) \quad \left(\frac{1}{B_{n}}\W_{\lfloor  |\tau_{n}|t \rfloor}(\tau_{n}); \, 0 \leq t \leq 1 \right)  \quad\mathop{\longrightarrow}^{(d)}_{n \rightarrow \infty} \quad  X^{ \mathrm{exc}, ( \alpha)}, \qquad (ii)\quad\frac{1}{B_{n}} \mathsf{H}(\tau_{n})   \quad\mathop{\longrightarrow}^{( \P)}_{n \rightarrow \infty} \quad  0,$$
where the first convergence holds in  distribution for the Skorokhod topology on $ \D([0,1], \R)$ and the second convergence holds in probability.
Then the  convergence
 $$ \displaystyle \frac{1}{B_{n}} \cdot \mathsf{Loop}( \tau_{n})  \quad \xrightarrow[n\to\infty]{(d)} \quad  \mathscr{L}_{ \alpha}$$
holds in distribution for the Gromov--Hausdorff topology.
\end{theorem}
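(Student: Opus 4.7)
\emph{Proof proposal.} The strategy is to define a discrete pseudo-distance $d^{(n)}$ on $\{0,\ldots,|\tau_n|-1\}$ built from the Lukasiewicz path $\mathsf{W}(\tau_n)$ that mirrors the continuous distance $d$, to show that $d^{(n)}$ approximates the graph distance on $\mathsf{Loop}(\tau_n)$ up to an additive error controlled by the tree height, and then to transfer the Skorokhod convergence (i) into uniform convergence of $d^{(n)}/B_n$ toward $d$. Following \cref{sec:descents}, on $\{0,\ldots,|\tau_n|-1\}$ I introduce the partial order $i \preccurlyeq_n j$ (equivalent to $u(i)$ being an ancestor of $u(j)$), the jump sizes $\Delta_i^{(n)}=k_{u(i)}(\tau_n)-1$, and the descent quantities $x_i^j$ attached to $\mathsf{W}(\tau_n)$. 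I set, for $i \preccurlyeq_n j$,
$$d^{(n)}_0(i,j) \;=\; \sum_{i \prec_n r \preccurlyeq_n j} \delta^{(n)}_r\bigl(0, x_r^j\bigr),$$
where $\delta^{(n)}_r$ is the cycle pseudo-distance on the loop attached to $u(r)$ (of length equal to the degree of $u(r)$), and extend $d^{(n)}$ to arbitrary pairs as in \eqref{eq:def2}. A direct comparison with the gluing procedure defining $\mathsf{Loop}(\tau_n)$ shows that
$$\bigl|d^{(n)}(i,j) - d_{\mathsf{Loop}(\tau_n)}(u(i),u(j))\bigr| \;\leq\; C\,\mathsf{H}(\tau_n)$$
for an absolute constant $C$, since $d^{(n)}(i,j)$ coincides up to one unit per ancestor cycle with the sum of the minimum-arc distances traversed along the tree geodesic in $\mathsf{Loop}(\tau_n)$.

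Using Skorokhod's representation, I may assume that (i) holds almost surely. A verbatim discrete transcription of \cref{lem:controls}(ii) yields
$$\tfrac{1}{B_n} d^{(n)}(\fl{|\tau_n|s},\fl{|\tau_n|t}) \;\leq\; \tfrac{1}{B_n}\Bigl(\mathsf{W}_{\fl{|\tau_n|s}}(\tau_n) + \mathsf{W}_{\fl{|\tau_n|t}-1}(\tau_n) - 2\,{\textstyle\inf_{[\fl{|\tau_n|s},\fl{|\tau_n|t}]}} \mathsf{W}(\tau_n)\Bigr) + o(1),$$
which, combined with almost sure Skorokhod convergence of $\mathsf{W}(\tau_n)/B_n$, provides a modulus of continuity in $(s,t)$ uniform in $n$. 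The main step is then to show
$$\sup_{s,t\in[0,1]} \Bigl| \tfrac{1}{B_n} d^{(n)}(\fl{|\tau_n|s},\fl{|\tau_n|t}) - d(s,t) \Bigr| \;\xrightarrow[n\to\infty]{(\P)}\; 0,$$
and by the uniform modulus of continuity it is enough to prove pointwise convergence in probability at every $(s,t)$ in a countable dense set of continuity points of $X^{\exc,(\alpha)}$.

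For such a fixed $(s,t)$, I split the sum defining $d^{(n)}_0$ according to whether $\Delta_r^{(n)} > \varepsilon B_n$ or not. The finitely many big-jump terms converge to their continuous analogues: by Skorokhod convergence the locations, sizes and descent ratios of the big jumps on the ancestral line from $\fl{|\tau_n|s}$ to $\fl{|\tau_n|t}$ converge jointly to the corresponding data for $X^{\exc,(\alpha)}$ on the ancestral line from $s$ to $t$. The small-jump residual is controlled by the discrete upper bound above restricted to the small-jump region, together with the fact that the analogous small-jump contribution in $d(s,t)$ vanishes as $\varepsilon \downarrow 0$ thanks to the continuity of $d$ proven in \cref{prop:pseudodistance} and to \eqref{eq:u2}.

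Gromov--Hausdorff convergence then follows from considering the correspondence
$$\mathcal{R}_n \;=\; \bigl\{ (\mathbf{p}(t),\, \mathsf{p}_n(\fl{|\tau_n|t})) : t \in [0,1] \bigr\} \;\subset\; \mathscr{L}_\alpha \times \tfrac{1}{B_n}\mathsf{Loop}(\tau_n),$$
where $\mathsf{p}_n(i)$ is the image of $u(i)$ in the discrete looptree; the two previous steps together with assumption (ii) give $\op{dis}(\mathcal{R}_n) \to 0$ in probability, so that \eqref{GHcorres} yields the conclusion. The main obstacle is the small-jump estimate of the pointwise convergence step: $d^{(n)}_0$ has up to $O(\mathsf{H}(\tau_n))$ summands, and proving that jumps smaller than $\varepsilon B_n$ contribute negligibly uniformly in $n$ requires a genuine functional comparison between the discrete and continuous L\'evy-type sums and makes essential use of both assumptions (i) and (ii).
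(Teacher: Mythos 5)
Your overall strategy is the same as the paper's: express the discrete looptree distance through the Lukasiewicz path by a formula mirroring \eqref{eq:def1}--\eqref{eq:def2}, build the correspondence $\{(\mathbf{p}(t),u^n_{\lfloor |\tau_n|t\rfloor})\}$, and identify the limit of the rescaled distances by splitting the descent sums into big and small jumps (your $O(\mathsf{H}(\tau_n))$ comparison between your pseudo-distance and the graph distance on $\mathsf{Loop}(\tau_n)$ is an acceptable substitute for the paper's auxiliary graph $\mathsf{Loop'}(\tau_n)$, which is within Gromov--Hausdorff distance $2$ of $\mathsf{Loop}(\tau_n)$). However, there is a genuine gap exactly at the point you yourself flag as ``the main obstacle'': the small-jump estimate is asserted, not proved, and the tools you invoke cannot close it. The continuity of $d$ (\cref{prop:pseudodistance}) and the identity \eqref{eq:u2} are statements about the limiting object only; they give no control, uniform in $n$, on the discrete sum $\sum_{k}\delta_{n,k}(0,x^{j_n}_{n,k})\mathbbm{1}_{\{x^{j_n}_{n,k}\le \eta B_n\}}$, whose naive bound is $\eta B_n$ times the number of ancestors, i.e.\ of order $\eta\,\mathsf{H}(\tau_n)\,B_n$, which after dividing by $B_n$ need not vanish. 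The paper closes this step with a specific mechanism you are missing: bound $\delta_{n,k}(0,x)\le x$ and use the deterministic telescoping identity \eqref{eq:use}, $\sum_{u^n_k\prec u^n_{j_n}} x^{j_n}_{n,k}=\mathrm{Height}(u^n_{j_n})+\W^n_{j_n}$, so that the small-jump total equals $\mathrm{Height}(u^n_{j_n})+\W^n_{j_n}$ minus the big-jump total; assumption (ii) kills the height term, $\W^n_{j_n}/B_n$ converges to $\X_{t}$ (or $\X_{t-}$), and \cref{coro:exc} (resting on \cref{lem:jumps}, i.e.\ the pure-jump nature of the ladder-height process, which guarantees $\X_t=\sum_{0\preccurlyeq s\preccurlyeq t}x_s^t$) shows the big jumps already capture all but $\varepsilon/4$ of $\X_t$. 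Without this identity (or an equivalent functional comparison) the proof is incomplete, since this is precisely where both hypotheses (i) and (ii) interact.

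A secondary weak point: your reduction of uniform convergence to pointwise convergence on a dense set rests on a ``modulus of continuity uniform in $n$'' extracted from the discrete analogue of \cref{lem:controls}~(ii). That bound, $\W_{i}+\W_{j-1}-2\inf_{[i,j]}\W$, is not uniformly small when $|i-j|/|\tau_n|$ is small: near a macroscopic jump of $X^{\exc,(\alpha)}$ it remains of order $B_n$, so it does not yield equicontinuity of $d^{(n)}/B_n$ (even though $d$ itself is continuous). The paper sidesteps this by arguing by contradiction along subsequences $(i_n,j_n)$, passing to subsequential limits $(s,t)$ and treating separately the cases $\W^n_{j_n}/B_n\to\X_t$ and $\W^n_{j_n}/B_n\to\X_{t-}$; you would need either that argument or a genuinely different tightness argument for the two-parameter family of rescaled distances.
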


Of course, the main applications of this {result} concern Galton--Watson trees. If $\rho$ is a probability measure on $\N$ such that $\rho(1)<1$, we denote by $ \mathsf{GW}_\rho$ the law of a Galton--Watson tree with offspring distribution $ \rho$. We say that $ \rho$ is critical if it has mean equal to $1$.

If $\rho$ is a critical offspring distribution in the domain of
attraction of a stable law\footnote{Recall that this
means that $\mu([j,\infty))= j^{- \alpha} L(j)$,
where $L: \R_+ \rightarrow \R_+$ is a function such that $L(x)>0$ for $x$ large enough and $\lim_{x
\rightarrow \infty} L(tx)/L(x)=1$ for all $t>0$ (such a function is
called slowly varying). We refer to \cite{BGT89} for details.} of index $ \alpha \in (1,2)$, Duquesne \cite{Du03} showed that $ \GW_{ \rho}$ trees conditioned to have $n$ vertices (provided this conditioning makes sense) satisfy the assumptions of \cref{thm:continuity} ((i) follows from Proposition 4.3 and the proof of Theorem 3.1 in \cite{Du03}, and (ii) follows from the fact that $H( \tau_{n}) \cdot B_{n}/n$ converges in distribution to a positive real valued random variable as $n \rightarrow \infty$ by \cite[Theorem 3.1]{Du03}). Recently, the second author \cite{Kor12} proved the same result  for $ \GW_{ \rho}$ trees conditioned to have $n$ leaves. 

\begin{remark} Let us mention that a different phenomenon happens when the offspring distribution $ \rho$ is critical and has finite variance: in this case, if $\tau_{n}$ denotes a $\GW_{ \rho}$ tree conditioned to have $n$ vertices, it is shown in \cite {CHK13+} that $  \mathsf{Loop}( \tau_{n})/ \sqrt {n}$ converges in distribution towards a constant times the Brownian CRT, and the constant depends this time on the offspring distribution in a rather complicated fashion (in \cite {CHK13+}  this is actually established under the condition that  $ \rho$ has a finite  exponential moment). The main difference is that in the finite variance case, $B_{n}$ is a constant times $ \sqrt {n}$, and $ H( \tau_{n})/B_{n}$ does not converge in probability to $0$ any more, but converges in distribution to a positive real-valued random variable. \end{remark}

\begin{remark} Condition $(ii)$ of the above theorem ensures that the height of $\tau_{n}$ is negligeable compared to the typical size of loops in $  \mathsf{Loop}( \tau_{n})$, so that asymptotically distances in $ \tau_{n}$ do not contribute to the distances in  $  \mathsf{Loop}( \tau_{n})$. Also observe that, in the boundary case $\alpha=2$, when $\rho$ has infinite variance (so that $ \rho$ is in the domain of attraction of the Gaussian law), we still have $H( \tau_{n})/ B_{n} \to 0$ (by the same argument that follows \eqref{eq:cvleaves}). In analogy with Theorem \ref{thm:1and2} (ii) we believe that, in this case,  $B_{n}^{-1} \cdot \mathsf{Loop}( \tau_{n})$ converges in distribution as $n \rightarrow \infty$ towards $ \frac{1}{2} \cdot \mathcal{T}_{2}$. \end{remark}

An immediate corollary of \cref{thm:continuity} is that $ \La$ is a length space (see \cite[Chapter 2]{BBI01} for the definition of a length space):

\begin{corollary}\label{cor:lengthspace}Almost surely, $ \La$ is a length space.
\end{corollary}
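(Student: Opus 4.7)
My plan is to deduce the corollary from the invariance principle (\cref{thm:continuity}) combined with a standard fact from metric geometry: a Gromov--Hausdorff limit of compact length spaces is itself a length space (see e.g.\ \cite{BBI01}, Theorem 7.5.1). The only care required is that the discrete looptrees $\mathsf{Loop}(\tau_n)$ as stated in the paper are endowed with the graph distance on vertices, which is \emph{not} a length space (there are no midpoints between adjacent vertices). So the first step is to replace each $\mathsf{Loop}(\tau_n)$ by its \emph{geometric realization} $\widetilde{\mathsf{Loop}}(\tau_n)$, i.e.\ the $1$-dimensional CW complex obtained by regarding every edge as an isometric copy of the unit interval $[0,1]$ and extending the graph distance accordingly. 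Each $\widetilde{\mathsf{Loop}}(\tau_n)$ is a finite connected graph with unit-length edges, hence a compact length space.

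Next I would observe that the natural inclusion $\mathsf{Loop}(\tau_n) \hookrightarrow \widetilde{\mathsf{Loop}}(\tau_n)$ is an isometric embedding, and that every point of $\widetilde{\mathsf{Loop}}(\tau_n)$ lies at distance at most $1/2$ from a vertex. Consequently
\[
\mathrm{d}_{\mathrm{GH}}\bigl(\mathsf{Loop}(\tau_n),\widetilde{\mathsf{Loop}}(\tau_n)\bigr) \leq \tfrac{1}{2}.
\]
Rescaling distances by $1/B_n$ and using the assumption $B_n\to\infty$ (which follows for the standard examples since $B_n$ is regularly varying of index $1/\alpha$), this Gromov--Hausdorff distance between $B_n^{-1}\cdot\mathsf{Loop}(\tau_n)$ and $B_n^{-1}\cdot\widetilde{\mathsf{Loop}}(\tau_n)$ tends to zero. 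Combined with \cref{thm:continuity} applied to a suitable sequence $(\tau_n)$ (e.g.\ conditioned Galton--Watson trees in the domain of attraction of an $\alpha$-stable law, as discussed right after the statement of the theorem), we obtain
\[
\frac{1}{B_n} \cdot \widetilde{\mathsf{Loop}}(\tau_n) \quad\xrightarrow[n\to\infty]{(d)}\quad \mathscr{L}_\alpha
\]
in the Gromov--Hausdorff topology.

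Finally, I would invoke the classical theorem that the class of compact length spaces is closed under Gromov--Hausdorff convergence: since each rescaled $\widetilde{\mathsf{Loop}}(\tau_n)$ is a compact length space and $\mathscr{L}_\alpha$ is compact (proved after \cref{def:looptree}), the limit $\mathscr{L}_\alpha$ is almost surely a length space. One can then transfer the statement from distributional convergence to an almost sure statement about $\mathscr{L}_\alpha$ by using Skorokhod's representation theorem, or simply observe that being a length space is a measurable property, so if it holds in distribution for a limit it holds almost surely for any realization of the limit. The only mildly delicate point is checking the hypotheses of \cref{thm:continuity} for a concrete family of trees, but this has already been established in the literature cited in the paragraph following that theorem, so no new work is needed.
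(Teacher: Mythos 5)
Your proof is correct and follows essentially the same route as the paper: combine the invariance principle of \cref{thm:continuity} with the stability of length spaces under Gromov--Hausdorff convergence (\cite[Theorem 7.5.1]{BBI01}). Your extra step of passing to the geometric realization (and the remark on transferring distributional convergence to an almost sure statement) merely fills in details the paper leaves implicit when it says $\mathscr{L}_\alpha$ is a Gromov--Hausdorff limit of finite metric spaces, so no substantive difference.
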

\proof This is a consequence of \cite[Theorem 7.5.1]{BBI01}, since by \cref{thm:continuity}, the space $ \La$ is  a Gromov--Hausdorff limit of finite metric spaces. \endproof

\proof[Proof of \cref{thm:continuity}] Let $(\tau_{n})_{n \geq 1}$ be a sequence of random trees and $(B_{n})_{n \geq 1}$ a sequence satisfying the assumptions $(i)$ and $(ii)$. {Note that necessarily $B_{n} \rightarrow \infty$ as $n \rightarrow \infty$.} The Skorokhod representation theorem allows us to assume that the convergences $(i)$ and $(ii)$ hold almost surely and we aim at proving an almost sure convergence of  $ B_{n}^{-1} \cdot \mathsf{Loop}( \tau_{n})$ towards $ \La$.  We first define a sequence of finite metric spaces denoted by  $\mathsf{Loop'}( \tau_{n})$ which are slightly different from $\mathsf{Loop}( \tau_{n})$, but more convenient to work with. Let $u_{0}^{n}, u_{1}^{n}, \ldots, u_{|\tau_{n}|-1}^{n}$ be the vertices of $ \tau_{n}$ listed in lexicographical order, then ${\mathsf{Loop'}}( \tau_{n})$ is by definition the graph on the set of vertices of $\tau_{n}$ such that two vertices $u$ and $v$ are joined by an edge if and only if one of the following three conditions are satisfied in $ \tau$: $u$ and $v$ are consecutive siblings of a same parent, or $u$ is the first sibling (in the lexicographical order) of $v$, or $u$ is the last sibling of $v$. In particular, if $u$ has a unique child $v$ in $ \tau$, then $u$ and $v$ are joined by two edges in ${\mathsf{Loop'}}( \tau_{n})$. See \cref{fig:loop_tilde} for an example. We equip ${\mathsf{Loop'}}( \tau_{n})$ with the graph metric.

\begin{figure}[!h]
 \begin{center}
 \includegraphics[width=0.75 \linewidth]{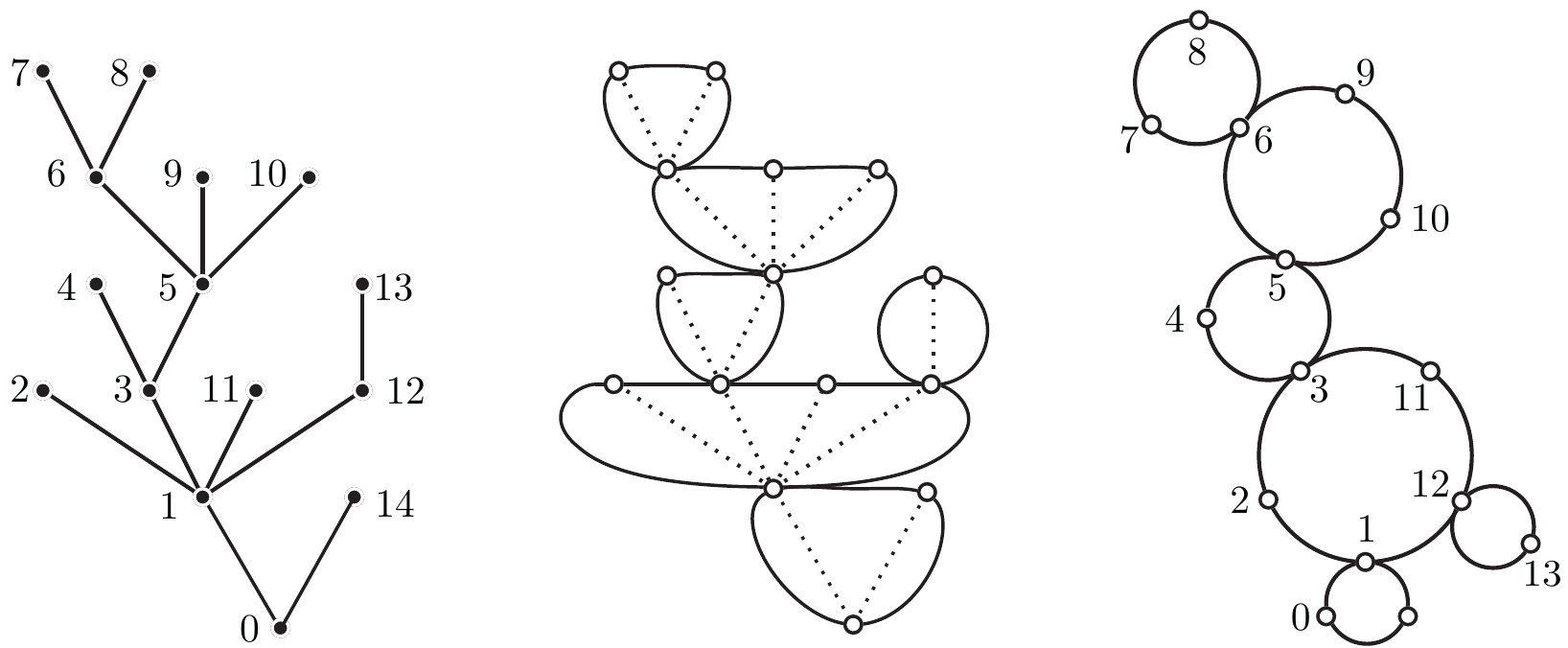}
 \caption{ \label{fig:loop_tilde}A discrete tree $\tau$ and  $ {\mathsf{Loop'}}( \tau)$.}
 \end{center}
 \end{figure}
 
It is easy to check that ${\mathsf{Loop'}}( \tau_{n})$ is at Gromov--Hausdorff distance at most $2$ from $ \mathsf{Loop}(\tau_{n})$ (compare \cref{fig:loop,fig:loop_tilde}). Since $ B_{n} \to \infty$ as $n \to \infty$, it is thus sufficient to show that
 \begin{equation}
 \label{eq:amq} \frac{1}{B_{n}} {\mathsf{Loop'}}( \tau_{n})  \quad \xrightarrow[n\to\infty]{a.s.} \quad  \mathscr{L}_{ \alpha}.
 \end{equation}
Recall that $ \mathbf{p} : [0,1] \to \mathscr{L}_{\alpha}$ denotes the canonical projection. For every $n \geq 1$,  we let  $ \mathcal{R}_{n}$ be the correspondence between $ \mathscr{L}_{\alpha}$ and $ B_{n}^{-1} \cdot \mathsf{Loop'}( \tau_{n})$  made of all the pairs $( \mathbf{p}(s),u_{i}^{n})$ such that $ i = \lfloor |\tau_{n}|s \rfloor  \pm 1$ where $s \in [0,1]$ and $i \in \{0,1,2, \ldots , |\tau_{n}|-1\}$. It is easy to check that $ \mathcal{R}_{n}$ is indeed a correspondence and we will show that, under our assumptions, its distortion vanishes as $n \to \infty$. 

To do so, we shall first see that the graph distance $  d'_{n}$ {of} $ \mathsf{Loop'}(\tau_{n})$  can be expressed in a very similar way to \eqref{eq:def2}.  To simplify notation, we denote by $( \W_{k}^n)_{0 \leq k \leq |\tau_{n}|}$ the Lukasiewicz path associated with $\tau_{n}$. By definition of $\W^n$, the vertex $u_{i}^n$ has $$ \Delta \W_{i}^n \quad := \quad\W_{i+1}^n-\W_{i}^n+1$$ children.  In addition, the discrete genealogical order (also denoted by $\preccurlyeq$) on $u_{0}^{n},  \ldots , u_{|\tau_{n}|-1}^{n}$ can be recovered from $\W^n$ in a similar way to the continuous setting (see the proof of Proposition 1.2 in \cite {LG05} for details): 
$$u_{i}^{n} \preccurlyeq u_{j}^{n} \qquad \textrm{if and only if}  \qquad i \leq j \textrm{ and }\inf_{i \leq m \leq j}\W_{m}^n = \W_{i}^n.$$ Furthermore, when $u_{i}^n \prec u_{j}^n$, that is when $ u_{i}^n \preccurlyeq u_{j}^n$ and $i \ne j$, the quantity 
$$ x^ {j}_ {n,i} \quad := \quad \inf_{i +1 \leq k \leq j }\W_{k}^n - \W_{i}^n + 1$$
informally gives  the ``position'' of the ancestral line of $u_{j}^{n}$ with respect to $u_{i}^{n}$; more precisely the  $ (\Delta \W_{i}^n-x_{n,i}^{j} + 1)$-th child of $u_{i}^n$ (in the lexicographical order) is an ancestor of $u_{j}^{n}$. 
Similarly to the continuous setting, one checks that the distance between $u_{i}^n \preccurlyeq u_{j}^n$ in $ \mathsf{Loop'}(\tau_{n})$ is given by 
\begin{equation}
\label{eq:ddiscret}{d'_{n}}(u_{i}^{n},u_{j}^{n}) = \sum_{ u^{n}_{i} \preccurlyeq u^{n}_{k} \prec u^{n}_{j}}\delta_{{n,k}}( 0, {x_{n,k }^ {j}}),
\end{equation}
where by definition $\delta_{n,i}(a,b) = |b-a| \wedge ( \Delta \W_{i}^n+1-|b-a|)$ for $a,b \in \{0, 1, 2, \ldots , \Delta \W_{i}^n\}$. If $u_{i}^n$ is not an ancestor of $u_{j}^n$, then the distance between  $u_{i}^n$ and  $u_{j}^n$ in $ \mathsf{Loop'}(\tau_{n})$ can be computed by breaking in three parts the geodesic between $u_{i}^n$ and  $u_{j}^n$ at their most recent common ancestor as in the continuous case (see \eqref{eq:def2}): if $u_{m}^n$ is the most recent common ancestor of $u_{i}^n$ and  $u_{j}^n$, then
\begin{equation}
\label{eq:def2discret}{d'_{n}}(u_{i}^{n},u_{j}^{n})= \delta_{n,m}(x_{n,m}^i,x_{n,m}^j)+ \sum_{ u^{n}_{m} \prec u^{n}_{k} \prec u^{n}_{i}}\delta_{{n,k}}( 0, {x_{n,k }^ {i}})+\sum_{ u^{n}_{m} \prec u^{n}_{k} \prec u^{n}_{j}}\delta_{{n,k}}( 0, {x_{n,k }^ {j}}).
\end{equation}

Now, we argue by contradiction and suppose that there exists $\varepsilon >0$, $i_{n},j_{n} \in \{0,1,  \ldots , |\tau_{n}|-1\}$, $ s_{n}, t_{n}\in [0,1]$ such that $(u^n_{i_{n}},  \pi(s_{n})) \in  \mathcal{R}_{n}$ and $(u_{j_{n}}^n , \pi(t_{n})) \in  \mathcal{R}_{n}$, and such that for every $n$ sufficiently large \begin{eqnarray} \label{eq:faux} \left| \frac{1}{B_{n}}d_{n}'(u_{i_{n}}^n,u_{j_{n}}^n) - d(s_{n},t_{n}) \right| \geq \varepsilon. \end{eqnarray}
By compactness, we may assume without loss of generality that $i_{n}/|\tau_{n}| \to s$ and $j_{n} / |\tau_{n}| \to t$. Because $i_{n} = \lfloor s_{n}|\tau_{n}|\rfloor \pm 1$, we also have $s_{n} \to s$ and similarly $t_{n}\to t$. We make the additional assumption that $u_{i_{n}}^n \preccurlyeq u_{j_{n}}^n$ and $s_{n} \preccurlyeq t_{n}$ for every $n$ sufficiently large. Note that this entails $ s \preccurlyeq t$. The general case is more tedious and can be solved by breaking at the most recent common ancestor  and using \eqref{eq:def2discret} instead of \eqref{eq:ddiscret}. We leave details to the reader.

The idea is now clear: On the one hand,  jumps of $\W^n$ converge after scaling towards the jumps of $X^{ \mathrm{exc}}$ and on the other hand $d$ and $d_{n}'$ have similar expressions involving their jumps (compare \eqref{eq:def1} and \eqref{eq:ddiscret}). Thus, intuitively, the inequality \eqref{eq:faux} cannot hold for $n$ sufficiently large. Let us prove this carefully. Since $  \{r \in [0,1]; \, s \preccurlyeq r \prec t \textrm{ and } \Delta_{r}>0\}$ is countable, by \eqref{eq:def1} there exists $\eta >0$ such that 
\begin{equation}
\label{eq:eps2}\sum_{\begin{subarray}{c} s \preccurlyeq r \prec t \\ \delta_{r}(0,x_{r}^t) > \eta \end{subarray}} \delta_{r}(0,x_{r}^t) \geq d(s,t) - \frac{ \varepsilon}{4}.
\end{equation}
Note that the sum appearing in the last expression contains a finite number of terms. To simplify notation, write $\{r \in [0,1]; \, s \preccurlyeq r \prec t \textrm{ and } \delta_{r}(0,x_{r}^t) > \eta \}=  \{r_{0},r_{1}, \ldots, r_{m}\}$ with $r_{0} \prec r_{1} \prec r_{2} \prec \ldots \prec r_{m} < t$ and possibly $r_{0}=s$. We shall now show that
\begin{equation}
\label{eq:s1} \sum_{\begin{subarray}{c} s \preccurlyeq r \prec t \\ \delta_{r}(0,x_{r}^t) > \eta \end{subarray}} \delta_{r}(0,x_{r}^t)  -  \frac{1}{B_{n}} \sum_{u_{i_{n}}^n \preccurlyeq u_{k}^n \prec u_{j_{n}}^n}\delta_{n,k}(0, x_{n,k}^{j_{n}}) \mathbbm{1}_{ \begin{subarray}{l}\delta_{n,k}(0, x_{n,k}^{j_{n}}) > \eta\cdot B_{n} \end{subarray}}  \quad\mathop{\longrightarrow}_{n \rightarrow \infty} \quad 0.
\end{equation}
 {Properties of the Skorokhod topology entail that the jumps of $\W^n/B_{n}$ converge towards the jumps of $ X^ \mathrm{exc}$, together with their locations.} It follows that for every $r \in \{r_{0}, r_{1}, r_{2}, \ldots ,r_{m}\}$ one can find $k_{n}(r) \in \{0,1,  \ldots , |\tau_{n}|-1\}$  such that the following two conditions hold for $n$ sufficiently large (see \cref{fig:explication} for an illustration):

\medskip

$(i)$ $ \displaystyle \frac{k_{n}(r)}{|\tau_{n}|} \to r, \quad u_{i_{n}}^n \preccurlyeq u_{k_{n}(r)}^n \preccurlyeq u_{j_{n}}^n, \quad \frac{1}{B_{n}}\delta_{n,k_{n}(r)}(0,x_{n,k_{n}(r)}^{j_{n}}) \to \delta_{r}(0, x_{r}^t)\quad \mbox{ as }n \to \infty,$

\medskip

$(ii)$ 
$  \displaystyle \left\{k_{n}(r_{0}), \ldots, k_{n}(r_{m}) \right\} = \left\{k_{n}; \, i_{n} \preccurlyeq k_{n} \preccurlyeq j_{n} \textrm{ such that } \delta_{n,k}(0, x_{n,k}^{j_{n}}) > \eta \cdot B_{n} \right\}.$

\noindent This implies \eqref{eq:s1}. { In $(i)$, when $r=r_{0}$, we use the fact that $ s_{n} \preccurlyeq t_{n}$ for every $n \geq 1$. 

\begin{figure}[!h]
 \begin{center}
 \includegraphics[width=0.9 \linewidth]{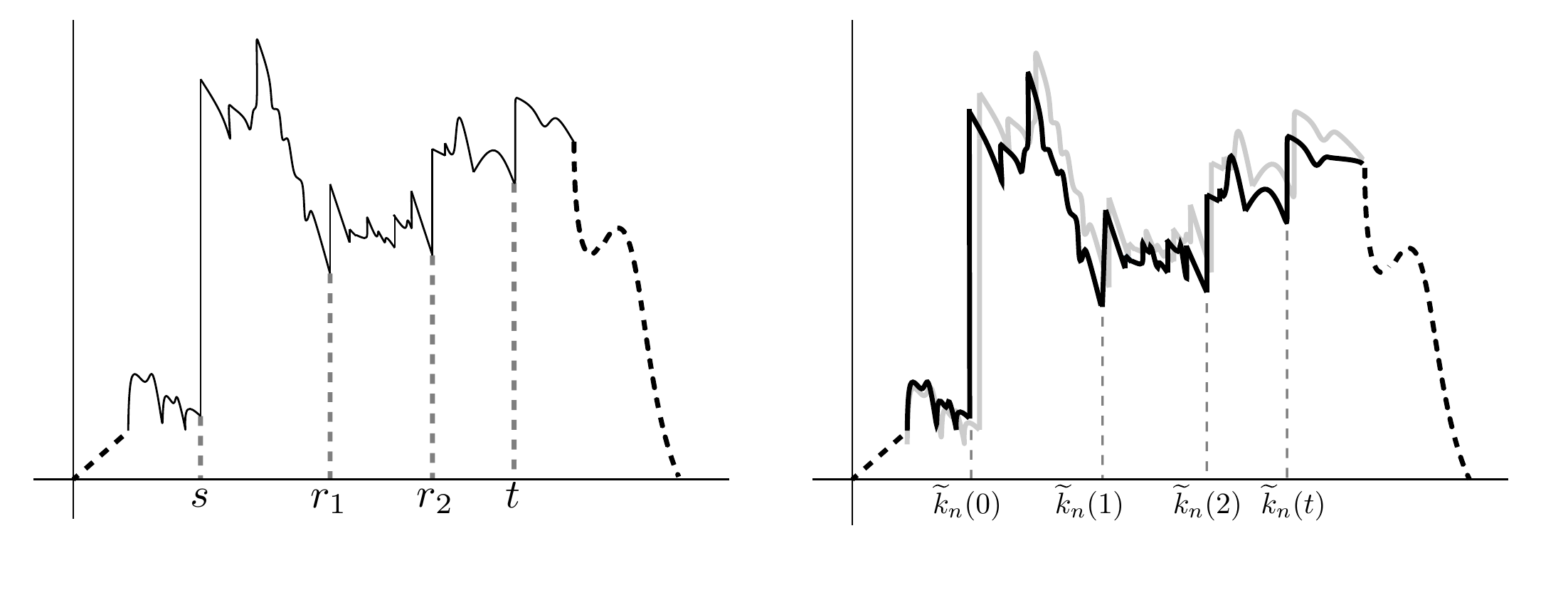}
 \caption{ \label{fig:explication}Illustration of the conditions (i) and (ii) above. In the figure in the right, the black process is $W^n/B_{n}$ and the  grey one is $ \X$. To simplify, here we have set $  \widetilde {k}_{n}(i)= k_{n}(r_{i})/ | \tau_{n}|$ and $  \widetilde {k}_{n}(t)= k_{n}(t)/ | \tau_{n}|$.}
 \end{center}
 \end{figure}

By combining \eqref{eq:eps2} and \eqref{eq:s1}, we get that
 \begin{eqnarray*} \limsup_{n \to \infty} \left| d(s,t)-d_{n}'(i_{n},j_{n})\right| &=& \limsup_{n\to \infty} \left| \sum_{s \preccurlyeq r \prec t}  \delta_{r}(0,x_{r}^t) -  \frac{1}{B_{n}}\sum_{u_{i_{n}}^n \preccurlyeq u_{k}^n \prec u_{j_{n}}^n} \delta_{n,k}(0, x_{n,k}^{j_{n}})\right| \\ 
 & \leq & \frac{ \varepsilon}{4} + \limsup_{n \to \infty} \frac{1}{B_{n}} \sum_{u_{i_{n}}^n \preccurlyeq u_{k}^n \prec u_{j_{n}}^n}\delta_{n,k}(0, x_{n,k}^{j_{n}}) \mathbbm{1}_{ x_{n,k}^{j_{n}} \leq \eta\cdot B_{n}} \end{eqnarray*}
 In order to get the desired contradiction, we show that the second term in the last display can be made less than $ \varepsilon/4$ provided that $ \eta >0$ is small enough. Indeed, we have
 \begin{eqnarray}
 \sum_{u_{i_{n}}^n \preccurlyeq u_{k}^n \prec u_{j_{n}}^n}\delta_{n,k}(0, x_{n,k}^{j_{n}}) \mathbbm{1}_{ x_{n,k}^{j_{n}} \leq \eta\cdot B_{n}}  &\leq& \sum_{u_{k}^n \prec u_{j_{n}}^n} x_{n,k}^{j_{n}} \mathbbm{1}_{ x_{n,k}^{j_{n}} \leq \eta\cdot B_{n}}  \notag\\
 &=&\sum_{u_{k}^n \prec u_{j_{n}}^n} x_{n,k}^{j_{n}} - \sum_{u_{k}^n \prec u_{j_{n}}^n} x_{n,k}^{j_{n}} \mathbbm{1}_{ x_{n,k}^{j_{n}} > \eta\cdot B_{n}}. \label{eq:eg}
 \end{eqnarray} The following equality will be useful 
\begin{equation}
\label{eq:use}\sum_{u_{k}^n \prec u_{j_{n}}^n}x_{n,k}^{j_{n}} = \mathrm{Height} (u_{j_{n}}^n) + \W^n_{j_{n}}.
\end{equation}
Since $ j_{n}/n \rightarrow t$, it sufficient to treat the case where either $\W^n_{j_{n}}/B_{n} \rightarrow \X_{t}$ or $\W^n_{j_{n}}/B_{n} \rightarrow \X_{t-}$. We first suppose that $\W^n_{j_{n}}/B_{n} \rightarrow \X_{t}$. At this point, we crucially use Corollary \ref{coro:exc} and assume that $\eta >0$ has been chosen  sufficiently small such that $$ \sum_{r \preccurlyeq t} x_{r}^t  \mathbbm{1}_{x_{r}^t > \eta'} \geq X_{t}^ \mathrm{exc} - \varepsilon/4.$$
 The same argument that led us to \eqref{eq:s1} entails  $$\sum_{u_{k}^n \prec u_{j_{n}}^n} x_{n,k}^{j_{n}} \mathbbm{1}_{ x_{n,k}^{j_{n}} > \eta'\cdot B_{n}}  \quad\mathop{\longrightarrow}_{n \rightarrow \infty} \quad \sum_{ r \preccurlyeq t} x_{r}^t \mathbbm{1}_{x_{r}^t > \eta'}.$$ 
{Note that we have used the fact that $\W^n_{j_{n}}/B_{n} \rightarrow \X_{t}$  in order to capture the term of the right-hand side corresponding to $r=t$.}  Consequently, combining the last display with \eqref{eq:use} and Assumption $(ii)$ of the theorem, we deduce that \eqref{eq:eg} becomes for every $n$ sufficiently large
$$\sum_{u_{i_{n}}^n \preccurlyeq u_{k}^n \prec u_{j_{n}}^n}\delta_{n,k}(0, x_{n,k}^{j_{n}}) \mathbbm{1}_{ x_{n,k}^{j_{n}} \leq \eta\cdot B_{n}} \leq  \varepsilon/4.$$
In the case $\W^n_{j_{n}}/B_{n} \rightarrow \X_{t-}$, the same argument applies after replacing every occurrence of $X_{t}^ \mathrm{exc}$ by $X_{t-}^ \mathrm{exc}$ and every occurrence of $r \preccurlyeq t$ by $r \prec t$. This completes the proof of the claim and  of \cref{thm:continuity}. \endproof

\subsection{Application to scaling limit of discrete non-crossing configurations}

We now give an application of the invariance principle established in the previous section by showing that  stable looptrees appear as Gromov--Hausdorff  limits of random Boltzmann dissections of  \cite{Kor11}. \bigskip

For every integer $n \geq 3$, recall from the Introduction that a dissection of the regular polygon  $P_{n}$ 
is the union of the sides of $P_n$ and of a collection of diagonals that may intersect only at
their endpoints, see \cref{fig:dual}. The faces are the connected components of the complement of the dissection in the polygon.

Recall from the Introduction the Boltzmann
probability measure $\mathbb{P} ^ { \mu}_ {n}$ on $ \mathbf{D}_n$,  the set of all
dissections of $P_{n+1}$. Our goal is to study  scaling limits of  random dissections $ \mathcal{D}^\mu_{n}$ sampled according to $  \mathbb{P}^\mu_{n}$ and prove \cref{cor:discretencstable}. Recall that $  \mathcal{D}^\mu_{n}$ is viewed as a metric space by endowing  the vertices of $  \mathcal{D}^\mu_{n}$ with the graph distance.
\paragraph{Duality with trees.}
The main tool is to use a bijection with trees. Indeed, the dual tree of $ \mathcal{D}^\mu_{n}$ is a Galton--Watson tree as we now explain.

Given a dissection $ \mathcal{D} \in \mathbf{D}_n$, we construct a (rooted ordered) tree $\phi( \mathcal{D})$ as follows: Consider the ``dual" graph
of $ \mathcal{D}$, obtained by placing a vertex inside each face of
$ \mathcal{D}$ and outside each side of the polygon $P_{n+1}$ and by
joining two vertices if the corresponding faces share a common edge,
thus giving a connected graph without cycles. Then remove the dual
edge intersecting the side of $P_{n+1}$ which connects $1$ to
$e^{\frac{2 \textrm{i} \pi}{n+1}}$. Finally, root the tree at the corner
adjacent to the latter side (see \cref{fig:dual}).

\begin{figure*}[!h]
\begin{center}
\includegraphics[scale=0.6]{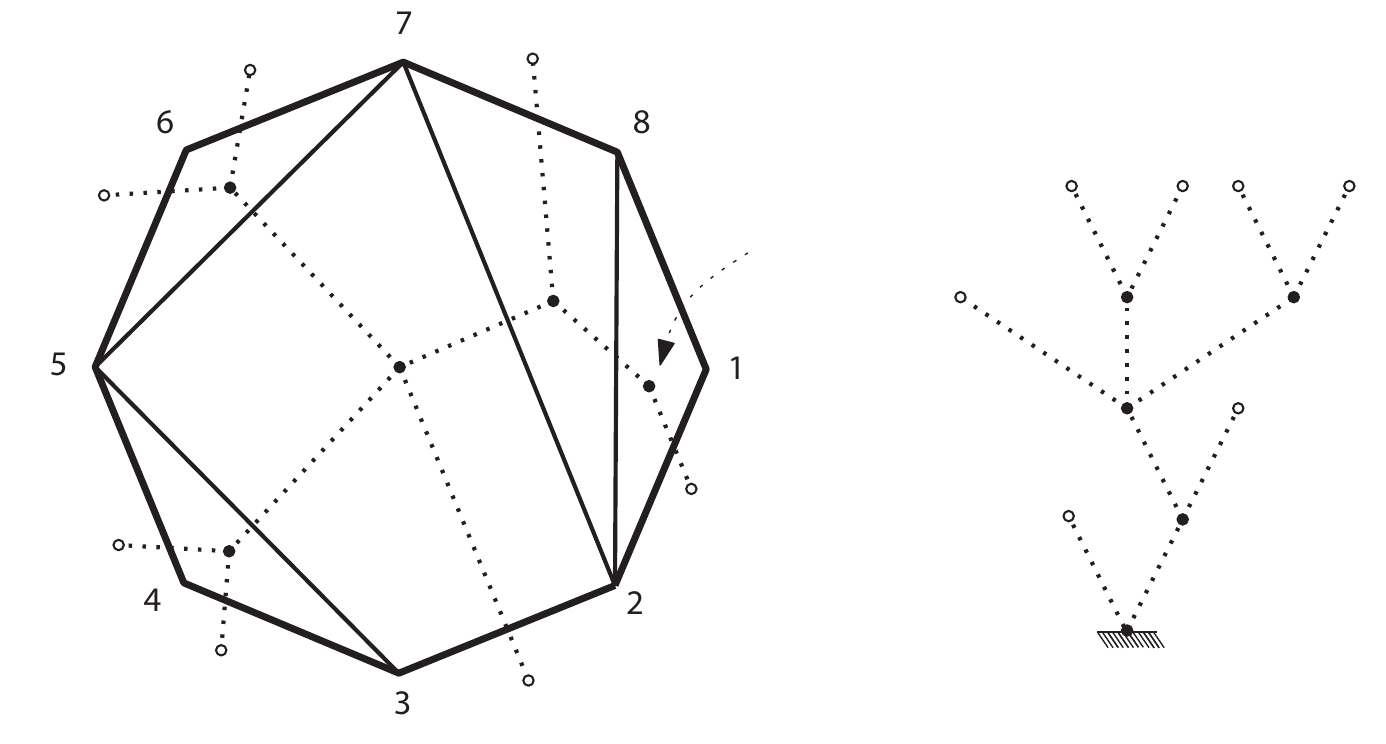}
\caption{\label{fig:dual} The dual tree of a dissection of $P_{8}$, note that the tree has $7$ leaves.}
\end{center}
\end{figure*}

We denote by $ \T^{(\ell)}_n$ the set of all plane trees with $n$ leaves such that there is no vertex with exactly one child. It is plain that the dual tree of a dissection is such a tree and the duality application $\phi$ is a bijection between $ \mathbf{D}_{n}$ and $ \T^{(\ell)}_n$. Finally, recall that $ \lambda(\tau)$ is the number of leaves of a tree $ \tau$.
The following proposition is \cite[Proposition 1.4]{Kor11}. 

\begin{proposition}\label{prop:GW} Let $\mu$ be a probability distribution over $\{0,2,3,4 \ldots\}$ of mean $1$. For every $n$ such that $ \mathsf{GW}_{\mu}( \lambda(\tau)=n)>0$, the dual tree $ \phi( \mathcal{D}_{n}^\mu)$ of a random dissection distributed according to $ \mathbb{P}_{n}^\mu$ is distributed according to $ \mathsf{GW}_{\mu}( . \mid \lambda(\tau)=n).$
\end{proposition}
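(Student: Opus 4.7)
The plan is to show that the pushforward $\mathbb{P}_n^\mu \circ \phi^{-1}$ coincides with $\GW_\mu(\,\cdot \mid \lambda(\tau) = n)$ on $\mathcal{T}_n^{(\ell)}$ by computing both distributions explicitly. The key combinatorial input is the following dictionary for $\phi$: each face $f$ of $\mathcal{D}$ of degree $d$ corresponds to an internal (non-leaf) vertex $u_f$ of $\phi(\mathcal{D})$ with exactly $\deg(f) - 1$ children. Indeed, in the dual graph $u_f$ has $d$ neighbours (one per edge bounding $f$), but in the rooted tree $\phi(\mathcal{D})$ exactly one of them is either the parent of $u_f$, or, when $f$ is adjacent to the marked side $s_0$, the removed outside-vertex; the remaining $d-1$ neighbours are children. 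Meanwhile, the $n$ leaves of $\phi(\mathcal{D})$ correspond bijectively to the $n$ sides of $P_{n+1}$ other than $s_0$.

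With this dictionary in hand, for any $\mathcal{D} \in \mathbf{D}_n$ and $\tau = \phi(\mathcal{D})$, I would compute
\begin{equation*}
\GW_\mu(\tau) \;=\; \prod_{u \in \tau} \mu_{k_u(\tau)} \;=\; \mu_0^n \prod_{f \text{ face of } \mathcal{D}} \mu_{\deg(f) - 1} \;=\; \mu_0^n \, Z_n \, \mathbb{P}_n^\mu(\mathcal{D}),
\end{equation*}
by factoring out the contribution of the $n$ leaves (each weighing $\mu_0$), invoking the $d \leftrightarrow d-1$ correspondence, and recognising the Boltzmann weight from the definition of $\mathbb{P}_n^\mu$.

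Finally, since $\mu$ is supported on $\{0,2,3,\ldots\}$, a $\GW_\mu$-tree has no unary vertex, hence lies in $\mathcal{T}_n^{(\ell)}$ whenever $\lambda(\tau) = n$. Summing the above display over $\tau \in \mathcal{T}_n^{(\ell)}$ (equivalently over $\mathcal{D} \in \mathbf{D}_n$ via the bijection $\phi$) gives $\GW_\mu(\lambda(\tau) = n) = \mu_0^n Z_n$, and dividing one identity by the other yields $\GW_\mu(\tau \mid \lambda(\tau) = n) = \mathbb{P}_n^\mu(\phi^{-1}(\tau))$, which is the claim. There is no serious obstacle here; the only point requiring care is that the $d \leftrightarrow d-1$ correspondence holds \emph{uniformly}, including at the root face, which it does because the removed dual edge at $s_0$ plays the same bookkeeping role as a parent edge would elsewhere in the tree.
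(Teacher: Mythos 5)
Your proposal is correct: the degree dictionary (a face of degree $d$ becomes an internal vertex with $d-1$ children, the $n$ non-root sides become the leaves), the weight identity $\GW_\mu(\tau)=\mu_0^n\prod_f\mu_{\deg(f)-1}$, and the normalisation $\GW_\mu(\lambda(\tau)=n)=\mu_0^n Z_n$ (valid since $\mu_1=0$ and criticality makes the tree a.s.\ finite) together give exactly the claim. The paper itself offers no proof, merely citing \cite[Proposition 1.4]{Kor11}, and your computation is essentially the same duality argument carried out in that reference.
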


With all the tools that we have in our hands, the proof of \cref{cor:discretencstable} is now effortless. 

\proof[Proof of \cref{cor:discretencstable}] Let $\mu$ be a probability measure on $\{0,2,3, \ldots\}$ satisfying the assumptions of \cref{cor:discretencstable}. By \cref{prop:GW}, we know that $\phi( \mathcal{D}_{n}^\mu)$ is a $ \mathsf{GW}_{ \mu}$ tree conditioned on having $n$ leaves. Set
$$B_{n}= \frac{n^{1/ \alpha}}{(| \Gamma(1- \alpha)| \cdot \mu_{0} \cdot c)^{1/ \alpha}}.$$
By \cite[Theorem 6.1 and Remark 5.10]{Kor12}, we have \begin{equation}
\label{eq:cvleaves}\left( \frac{1}{B_{n}} \cdot \W_{ \lfloor n t/ \mu_{0} \rfloor}( \tau_{n}) ; \quad 0 \leq t \leq 1\right) \quad \xrightarrow[n\to\infty]{(d)} \quad \left(X_{t}^{ \mathrm{exc},(\alpha)}; \quad 0 \leq t \leq 1 \right),
\end{equation} and, in addition, by \cite[Theorem 5.9 $(ii)$ and Remark 5.10]{Kor12}, $B_{n}/n \cdot H( \tau_{n})$ converges in distribution towards a positive real valued random variable as $ n \rightarrow \infty$, which implies that $ \mathsf{H}( \tau_{n}) /B_{n}$ converges in probability to $0$ as $n \rightarrow \infty$ since $B_{n}^{2}/n \rightarrow 0$. We are thus in position to apply \cref{thm:continuity} and get that ${B_{n}^{-1}} \cdot \mathsf{Loop}( \tau_{n})$ converges in distribution towards $ \mathscr{L}_{\alpha}$ for the Gromov--Hausdorff topology. 

We now claim that the Gromov--Hausdorff distance between $\mathcal{D}_{n}^\mu$ and $ \mathsf{Loop}( \tau_{n})$ is {roughly} bounded by the height of $ \tau_{n}$, more precisely
 \begin{eqnarray} \label{eq:closeGH} \mathrm{d_{GH}}(\mathcal{D}_{n}^\mu, \mathsf{Loop}( \tau_{n})) &\leq&    \mathsf{H}( \tau_{n}) + 2  \end{eqnarray}
{for every $n \geq 1$.} Clearly, since $ \mathsf{H}( \tau_{n}) /B_{n}$ converges in probability to $0$ as $n \rightarrow \infty$ as we have already seen, this implies the statement of the theorem. To establish \eqref{eq:closeGH}, we construct a correspondence between ${\mathcal{D}}_{n}^\mu$ and $\mathsf{Loop}( \tau_{n})$ as suggested by \cref{fig:loopdissec}: 
a point $x \in{\mathcal{D}}_{n}^\mu$ is in correspondence with a point $a \in\mathsf{Loop}( \tau_{n})$ if there exists an edge of $ \mathcal{D}_{n}^\mu$ containing both $a$ and $x$. 
\begin{figure}[!h]
 \begin{center}
 \includegraphics[width= 0.65 \linewidth]{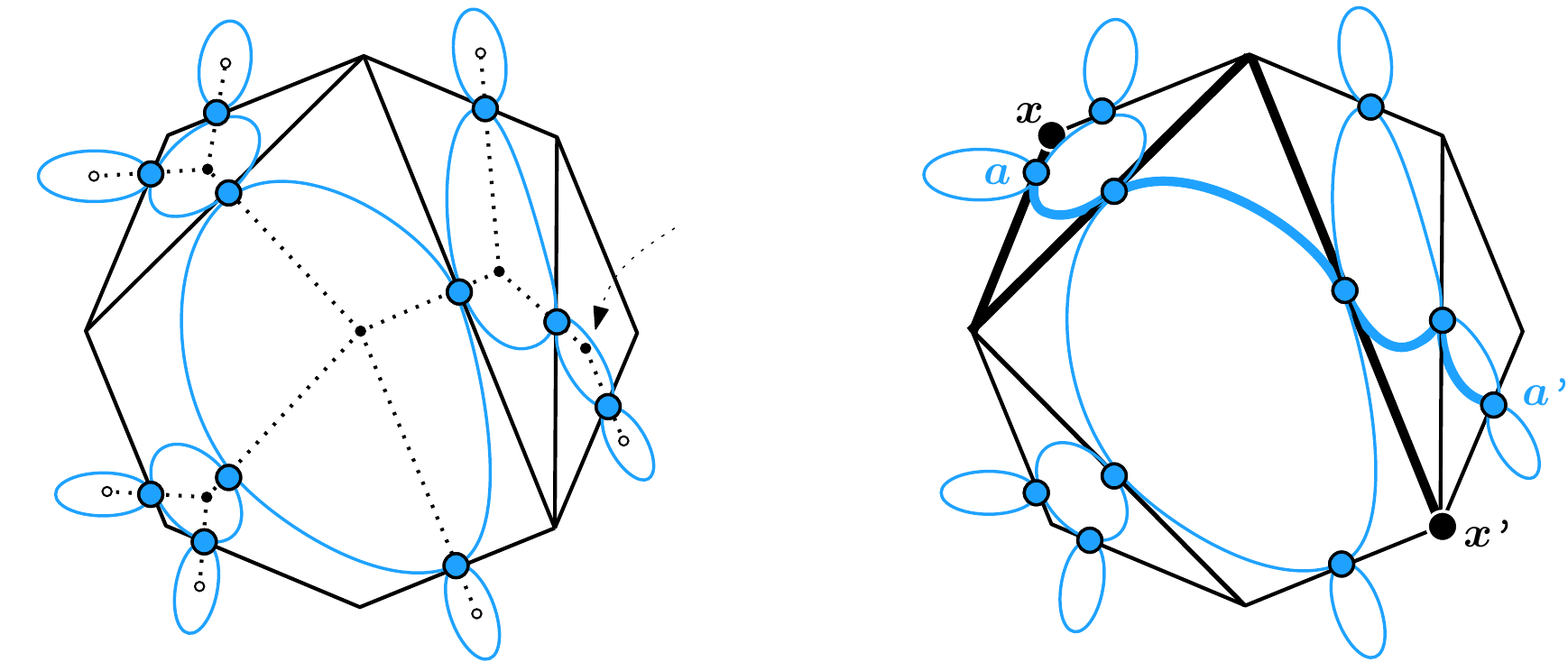}
 \caption{Close relationship between $ {\mathcal{D}}_{n}^\mu$ and $\mathsf{Loop}( \tau_{n})$. In the right-hand side figure, the geodesics $ \gamma_{a,a'}$ and $ \Gamma_{x,x'}$ are in bold.}\label{fig:loopdissec}
 \end{center}
 \end{figure}
 
 This clearly defines a correspondence between ${\mathcal{D}}_{n}^\mu$ and $\mathsf{Loop}( \tau_{n})$. Let us bound its distortion. Let $a,a' \in\mathsf{Loop}( \tau_{n})$ and $x,x'  \in {\mathcal{D}}_{n}^\mu$
 be such that $(a, x) \in \mathcal{R}$ and $(a', x') \in \mathcal{R} $. {Consider} a geodesic $\gamma_{a,a'}$ in $\mathsf{Loop}( \tau_{n})$ from $a$ to $a'$. {One can then construct a geodesic $ \Gamma_{x,x'}$ going from $x$ to $x'$ which stays ``close'' to $ \gamma_{a,a'}$ (see \cref{fig:loopdissec}), meaning that the length of the portion of $\gamma_{a,a'}$ belonging to any loop differs at most by one from the length of the portion of $ \Gamma_{x,x'}$ belonging to the corresponding face.} Since the number of loops crossed by $ \gamma_{a,a'}$ is bounded by the height of $ \tau_{n}$, it follows that $$ | \textsf{Length}(\gamma_{a,a'}) - \textsf{Length}(\Gamma_{x,x'})| \leq  \mathsf{H}( \tau_{n})+2,$$
 the term $+2$ taking into account the boundary effect due to the root edge. This yields \eqref{eq:closeGH} and finishes the proof of the corollary.
\endproof

\cref{cor:discretencstable} remains true under the more general assumption that $\mu( [k, \infty)) = L(k) \cdot k^{- \alpha}$, where $L$ is a slowly varying function at infinity. In this case, the scaling factors are slightly modified.  
\begin{remark} \label {rem:reroot}By using the fact that the law of $\mathcal{D}_{n}^\mu$ is invariant under rotations of angle  $ 2 \pi \mathbb{Z} / (n+1)$ and passing to the limit using \eqref{eq:cvleaves}, it is possible to obtain a re-rooting invariance property for looptrees, and in particular get that if $U$ and $V$ are two independent random variables uniformly distributed over  $[0,1]$, independent of $ \X$, then
$$\frac{d(U,V)}{  \mathrm{d}_{ X^{\exc}}(U,V)}  \quad\mathop{=}^{(d)} \quad \frac{d(0,U)}{ X^{\exc}(U)} .$$
\end{remark}



\begin{thebibliography}{10}

\bibitem{Ald93} {\sc D.~Aldous}, {\em The continuum random tree {III}}, Ann. Probab., 21  (1993), pp.~248--289. \MR{1207226}

\bibitem{AS03} {\sc O.~Angel and O.~Schramm}, {\em Uniform infinite planar triangulation},  Comm. Math. Phys., 241 (2003), pp.~191--213. \MR{2013797}

\bibitem{AL11} {\sc I.~Armend{\'a}riz and M.~Loulakis}, {\em Conditional distribution of heavy  tailed random variables on large deviations of their sum}, Stochastic  Process. Appl., 121 (2011), pp.~1138--1147. \MR{2775110}

\bibitem{Ber92} {\sc J.~Bertoin}, {\em An extension of {P}itman's theorem for spectrally  positive {L}\'evy processes}, Ann. Probab., 20 (1992), pp.~1464--1483. \MR{1175272}

\bibitem{Ber96}
\leavevmode\vrule height 2pt depth -1.6pt width 23pt, {\em L\'evy processes},
  vol.~121 of Cambridge Tracts in Mathematics, Cambridge University Press,
  Cambridge, 1996.

\bibitem{Ber11}
\leavevmode\vrule height 2pt depth -1.6pt width 23pt, {\em On the maximal
  offspring in a critical branching process with infinite variance}, J. Appl.
  Probab., 48 (2011), pp.~576--582.

\bibitem{Bil68} {\sc P.~Billingsley}, {\em Convergence of probability measures}, John Wiley \&  Sons, Inc., New York-London-Sydney, 1968. \MR{0233396}

\bibitem{Bil99}
\leavevmode\vrule height 2pt depth -1.6pt width 23pt, {\em Convergence of
  probability measures}, Wiley Series in Probability and Statistics:
  Probability and Statistics, John Wiley \& Sons Inc., New York, second~ed.,
  1999.
\newblock A Wiley-Interscience Publication.

\bibitem{BGT89} {\sc N.~H. Bingham, C.~M. Goldie, and J.~L. Teugels}, {\em Regular variation},  vol.~27 of Encyclopedia of Mathematics and its Applications, Cambridge  University Press, Cambridge, 1989. \MR{1015093}

\bibitem{BBI01} {\sc D.~Burago, Y.~Burago, and S.~Ivanov}, {\em A course in metric geometry},  vol.~33 of Graduate Studies in Mathematics, American Mathematical Society,  Providence, RI, 2001. \MR{1835418}

\bibitem{CS04} {\sc P.~Chassaing and G.~Schaeffer}, {\em Random planar lattices and integrated  super{B}rownian excursion}, Probab. Theory Related Fields, 128 (2004),  pp.~161--212. \MR{2031225}

\bibitem{Cha97} {\sc L.~Chaumont}, {\em Excursion normalis\'ee, m\'eandre et pont pour les  processus de {L}\'evy stables}, Bull. Sci. Math., 121 (1997), pp.~377--403. \MR{1465814}

\bibitem{CDKM14}
{\sc N.~Curien, T.~Duquesne, I.~Kortchemski, and I.~Manolescu}, {\em Scaling
  limits and influence of the seed graph in preferential attachment trees},
  arXiv:1406.1758,  (submitted).

\bibitem{CHK13+}
{\sc N.~Curien, B.~Haas, and I.~Kortchemski}, {\em The {CRT} is the scaling
  limit of random dissections}, To appear in Random Struct. Alg.

\bibitem{CKpercolooptrees}
{\sc N.~Curien and I.~Kortchemski}, {\em Percolation on random triangulations
  and stable looptrees}, To appear in Probab. Theory Related Fields.

\bibitem{Du03} {\sc T.~Duquesne}, {\em A limit theorem for the contour process of conditioned  {G}alton-{W}atson trees}, Ann. Probab., 31 (2003), pp.~996--1027. \MR{1964956}

\bibitem{DLG02} {\sc T.~Duquesne and J.-F. Le~Gall}, {\em Random trees, {L}\'evy processes and  spatial branching processes}, Ast\'erisque, (2002), pp.~vi+147. \MR{1954248}

\bibitem{DLG05}
\leavevmode\vrule height 2pt depth -1.6pt width 23pt, {\em Probabilistic and
  fractal aspects of {L}\'evy trees}, Probab. Theory Related Fields, 131
  (2005), pp.~553--603.

\bibitem{D80} {\sc R.~Durrett}, {\em Conditioned limit theorems for random walks with  negative drift}, Z. Wahrsch. Verw. Gebiete, 52 (1980), pp.~277--287. \MR{0576888}

\bibitem{FKZ13} {\sc S.~Foss, D.~Korshunov, and S.~Zachary}, {\em An introduction to  heavy-tailed and subexponential distributions}, Springer Series in Operations  Research and Financial Engineering, Springer, New York, second~ed., 2013. \MR{3097424}

\bibitem{Jan12} {\sc S.~Janson}, {\em Simply generated trees, conditioned {G}alton-{W}atson  trees, random allocations and condensation}, Probab. Surv., 9 (2012),  pp.~103--252. \MR{2908619}

\bibitem{Kor12} {\sc I.~Kortchemski}, {\em Invariance principles for {G}alton-{W}atson trees  conditioned on the number of leaves}, Stochastic Process. Appl., 122 (2012),  pp.~3126--3172. \MR{2946438}

\bibitem{Kor11}
\leavevmode\vrule height 2pt depth -1.6pt width 23pt, {\em Random stable
  laminations of the disk}, Ann. Probab., 42 (2014), pp.~725--759.

\bibitem{LG05} {\sc J.-F. Le~Gall}, {\em Random trees and applications}, Probability Surveys,  (2005). \MR{2203728}

\bibitem{LG11}
\leavevmode\vrule height 2pt depth -1.6pt width 23pt, {\em Uniqueness and
  universality of the {B}rownian map}, Ann. Probab., 41 (2013), pp.~2880--2960.

\bibitem{LGLJ98} {\sc J.-F. Le~Gall and Y.~Le~Jan}, {\em Branching processes in {L}\'evy  processes: the exploration process}, Ann. Probab., 26 (1998), pp.~213--252. \MR{1617047}

\bibitem{LGM09} {\sc J.-F. Le~Gall and G.~Miermont}, {\em Scaling limits of random planar maps  with large faces}, Ann. Probab., 39 (2011), pp.~1--69. \MR{2778796}

\bibitem{Mat95} {\sc P.~Mattila}, {\em Geometry of sets and measures in {E}uclidean spaces},  vol.~44 of Cambridge Studies in Advanced Mathematics, Cambridge University  Press, Cambridge, 1995. \newblock Fractals and rectifiability. \MR{1333890}

\bibitem{Mie05} {\sc G.~Miermont}, {\em Self-similar fragmentations derived from the stable  tree. {II}. {S}plitting at nodes}, Probab. Theory Related Fields, 131 (2005),  pp.~341--375. \MR{2123249}

\bibitem{Mie11}
\leavevmode\vrule height 2pt depth -1.6pt width 23pt, {\em The {B}rownian map
  is the scaling limit of uniform random plane quadrangulations}, Acta Math.,
  210 (2013), pp.~319--401.

\bibitem{Nev86} {\sc J.~Neveu}, {\em Arbres et processus de {G}alton-{W}atson}, Ann. Inst. H.  Poincar\'e Probab. Statist., 22 (1986), pp.~199--207. \MR{0850756}

\bibitem{Nev92}
{\sc J.~Neveu}, {\em A continuous-state branching process in relation with the
  grem model of spin glass theory}, Rapport interne no 267, Ecole
  Polytechnique,,  (1992).

\bibitem{Pol84} {\sc D.~Pollard}, {\em Convergence of stochastic processes}, Springer Series in  Statistics, Springer-Verlag, New York, 1984. \MR{0762984}

\bibitem{Zol86} {\sc V.~M. Zolotarev}, {\em One-dimensional stable distributions}, vol.~65 of  Translations of Mathematical Monographs, American Mathematical Society,  Providence, RI, 1986. \newblock Translated from the Russian by H. H. McFaden, Translation edited by  Ben Silver. \MR{0854867}

\end{thebibliography}



\ACKNO{We are indebted to Jean Bertoin, Lo•c Chaumont and Thomas Duquesne for several enlightening discussions on LŽvy processes and stable trees. We are also grateful to an anonymous referee for several useful comments.}


\end{document}